\title{ Beta-gamma algebra identities and Lie-theoretic exponential functionals of Brownian motion}
\author{Reda \textsc{Chhaibi}        \footnote{Universit\"at Z\"urich. Email: \texttt{reda.chhaibi@math.uzh.ch}} } 
\date{}
\DeclareMathOperator{\Inv}{Inv}
\def\half{\frac{1}{2}}
\def\eqlaw{\stackrel{\Lc}{=}}
\def\N{{\mathbb N}}
\def\Q{{\mathbb Q}}
\def\R{{\mathbb R}}
\def\C{{\mathbb C}}
\def\P{{\mathbb P}}
\def\E{{\mathbb E}}
\def\Ac{{\mathcal A}}
\def\Fc{{\mathcal F}}
\def\Lc{{\mathcal L}}
\def\Oc{{\mathcal O}}
\def\Uc{{\mathcal U}}
\def\afrak{{\mathfrak a}}
\def\gfrak{{\mathfrak g}}
\def\hfrak{{\mathfrak h}}
\def\nfrak{{\mathfrak n}}
\def\ufrak{{\mathfrak u}}
\newtheorem{thm}{Theorem}[section]
\newtheorem{proposition}[thm]{Proposition}
\newtheorem{corollary}[thm]{Corollary}
\newtheorem{question}[thm]{Question}
\newtheorem{definition}[thm]{Definition}
\newtheorem{example}[thm]{Example}
\newtheorem{lemma}[thm]{Lemma}
\newtheorem{rmk}[thm]{Remark}
\numberwithin{equation}{section}
\begin{document}
\date{}
\maketitle
\begin{center}
\textit{ In memoriam Marc Yor }
\end{center}

\begin{abstract}
We explicitly compute the exit law of a certain hypoelliptic Brownian motion on a solvable Lie group. The underlying random variable can be seen as a multidimensional exponential functional of  Brownian motion. As a consequence, we obtain hidden identities in law between gamma random variables as the probabilistic manifestation of braid relations. The classical beta-gamma algebra identity corresponds to the only braid move in a root system of type $A_2$. The other ones seem new.

A key ingredient is a conditional representation theorem. It relates our hypoelliptic Brownian motion conditioned on exiting at a fixed point to a certain deterministic transform of Brownian motion.

The identities in law between gamma variables tropicalize to identities between exponential random variables. These are continuous versions of identities between geometric random variables related to changes of parametrizations in Lusztig's canonical basis. Hence, we see that the exit law of our hypoelliptic Brownian motion is the geometric analogue of a simple natural measure on Lusztig's canonical basis.
\end{abstract}
{\bf MSC 2010 subject classifications:} 60B15, 60B20, 60J65\\
{\bf Keywords:} Beta-gamma algebra identities, Exponential functionals of Brownian motion, Braid relations, Total positivity, Brownian motion.

\newpage
\tableofcontents

\section{Introduction}

Let $G$ be a complex semi-simple group of rank $r$. We fix a Borel subgroup $B$. $B = NH$ where $H \approx \left( \C^* \right)^r$ is a maximal complex torus and $N$ is a lower unipotent subgroup $N$. We denote by $\hfrak \approx \C^r$ the Lie algebra of $H$. If $\afrak \approx \R^r$ is the subspace where roots are real, we have $\hfrak = \afrak + i \afrak$. Moreover, we write $A := \exp\left( \afrak \right)$.

Since $\afrak$ is an Euclidean space thanks to the Killing form $\langle \cdot , \cdot \rangle$, there is a natural notion of Brownian motion on $\afrak$. Then the Brownian motion with drift $\mu$ is denoted by:
$$ X^{(\mu)}_t := X_t + \mu \ t$$

In the context of geometric crystals, the Robinson-Schensted correspondence with random input $X^{(\mu)}$ is performed by solving, on the Borel subgroup $B$, a left-invariant stochastic differential equation driven by this Brownian motion (\cite{bib:chh14a} section 10, \cite{bib:thesis}). One then obtains a Lie group valued stochastic process $B_t\left(X^{(\mu)}\right)$, which was first introduced by \cite{bib:BBO}. More is said in the preliminary section. We refer to this process as our hypoelliptic Brownian motion because its infinitesimal generator satisfies the (parabolic) H\"ormander condition. Although we will not make use of this fact, it is reassuring to know that it has a smooth transition kernel.

The stochastic process $B_t\left( X^{(\mu)} \right)$ has an $NA$ decomposition:
$$ B_t\left( X^{(\mu)} \right) = N_t\left( X^{(\mu)} \right) A_t\left( X^{(\mu)} \right)$$
The $A$ part is a multiplicative Brownian motion and does not converge. We focus on the $N$ part which plays the role of a multidimensional exponential functional of Brownian motion. Let $\Delta$ be set the set of simple roots and $C$ be the open Weyl chamber:
$$ C := \left\{ x \in \afrak \ | \ \forall \alpha \in \Delta, \langle \alpha, x \rangle > 0 \right\}$$
When $\mu \in C$ the $N$ part converges to $N_\infty\left( X^{(\mu)} \right)$. We refer to the law of $N_\infty\left( X^{(\mu)} \right)$ as the exit of law of the hypoelliptic Brownian motion. 

The first result is the conditional representation theorem \ref{thm:conditional_representation} that characterizes the law of a certain integral transform of Brownian motion as the Brownian motion $X^{(\mu)}$ conditioned to $N_\infty\left( X^{(\mu)} \right)$ being fixed. 

As a consequence, we are able to give an explicit formula for the exit law in theorem \ref{thm:N_infty_law}. This is our second main result. The expression involves independent gamma variables. In the case of the group $SL_2$, one recovers Dufresne's identity in law on the exponential functional of a Brownian motion with drift \cite{bib:Dufresne90}. For $W$ a standard Brownian motion and $\mu>0$, this identity states that the random variable $2\int_0^\infty e^{-2 W_s^{(\mu)}} ds$ has the same distribution as $\frac{1}{\gamma_\mu}$, where $\gamma_\mu$ is a gamma random variable with parameter $\mu$. For groups with higher rank, the presented construction gives the explicit law of multiple exponential functionals of Brownian motion.

Almost surely, $N_\infty\left( X^{(\mu)} \right)$ belongs to the set of totally positive matrices $N_{>0}^{w_0} \subset N$. The study of totally positivity in reductive groups has been initiated by George Lusztig (see \cite{bib:Lusztig08} for a survey), motivated by the theory of canonical bases. We will only need the fact that $N^{w_0}_{>0}$ possesses equivalent charts indexed by reduced words ${\bf i}$ of the longest element $w_0$ in the Weyl group. Simply by noticing that the law of $N_\infty\left( X^{(\mu)} \right)$ uses gamma variables and charts that depend on a choice of reduced word, we find hidden identities in law between these gamma variables. Primitive identities are associated to braid moves. It follows that we have as many primitive identities as there are rank $2$ root systems. That is the content of our third main result, stated as theorem \ref{thm:beta_gamma_identities}.

In fact, the identities between gamma variables tropicalize to identities between exponential random variables. We also prove the discrete version involving geometric random variables, using Lusztig's parametrization of canonical bases. 

\subsection*{Structure of the paper}
We begin by stating the three main theorems \ref{thm:conditional_representation}, \ref{thm:N_infty_law} and \ref{thm:beta_gamma_identities} in section \ref{section:main_results}, after the necessary preliminaries on Lie theory and total positivity. We will illustrate our claims thanks to examples from $SL_2$ and $SL_3$.

It is more convenient to postpone the proof of the conditional representation theorem \ref{thm:conditional_representation} and explain right away how it implies the two others. In section \ref{section:identities}, we show how rational identities between gamma variables tropicalize to min-plus identities between exponential variables. It implies the discrete version involving geometric random variables.

In section \ref{section:proof_conditional_thm}, we prove the conditional representation theorem. Thanks to results from \cite{bib:chh14a} and \cite{bib:thesis}, we reduce the problem to an induction whose base case is a result by Matsumoto and Yor on a relationship between Brownian motions with opposite drifts \cite{bib:MY01}. Just before diving into the proof, we explain how the Matsumoto-Yor theorem is the $SL_2$ case of ours.  We review this result for the sake of completeness in section \ref{section:review_matsumoto_yor}, along with Dufresne's identity. This section is absent in the published version.

Finally, we conclude with some open questions.

\subsection*{Acknowledgments}
The author is grateful to Marc Yor for fruitful discussions on exponential functionals of Brownian motion and beta-gamma identities. This paper is dedicated to his memory. I am also thankful to Philippe Bougerol for his guidance during my PhD thesis \cite{bib:thesis}, on which this article is based.

\section{Main results}
\label{section:main_results}

\subsection{Preliminaries}

\paragraph{Lie theory:} We will need some (mostly standard) notations and terminology for semi-simple groups and algebras (see for example \cite{bib:Springer09}, \cite{bib:Humphreys72}). Let $\gfrak$ be a complex semi-simple Lie algebra of rank $r$, and $\hfrak \approx \C^r$ is a maximal abelian subalgebra, the Cartan subalgebra. It has a Cartan decomposition $\gfrak = \nfrak \oplus \hfrak \oplus \ufrak$. $\Delta = \left\{ \alpha_i, \ 1 \leq i \leq r \right\} \subset \hfrak^*$ (resp. $\Delta^\vee \subset \hfrak$) denote the simple roots (resp. coroots) of $\gfrak$. The real part of the Cartan subalgebra $\afrak$ ( $\hfrak = \afrak + i \afrak)$ is the subspace where simple roots are real valued. The structure of $\gfrak$ is entirely encoded by the Cartan matrix $A = \left( a_{i,j} = \alpha_i\left( \alpha_j^\vee \right) \right)_{1 \leq i,j \leq r}$. The coefficients in the Cartan matrix determine the relations between Chevalley generators $\left( f_\alpha, h_\alpha = \alpha^\vee, e_\alpha \right)$, for $\alpha 
\in \Delta$. Simple roots form a basis of $\hfrak^*$ dual to the fundamental coweights $\left( \omega_\alpha^\vee \right)_{\alpha \in \Delta}$.

Let $G$ be a simply-connected complex Lie group with Lie algebra $\gfrak$. $N$, $H$ and $U$ are the subgroups with Lie algebras $\nfrak \oplus \hfrak \oplus \ufrak$. $H \approx \left( \C^* \right)^r$ is a maximal torus. $B = NH$ and $B^+=HU$ form a pair of opposite Borel subgroups. We have a one-parameter subgroup for every $\alpha \in \Delta$:
$$ \forall t \in \C, y_\alpha(t) := \exp(t f_\alpha),\ x_\alpha := \exp\left( t e_\alpha \right)$$

When convenient, they will sometimes be written $y_i$ and $x_i$, $ 1 \leq i \leq r$.

We identify $\hfrak$ and $\hfrak^*$ thanks to the Killing form $\langle \cdot , \cdot \rangle$. In this identification, $\alpha^\vee = \frac{2 \alpha}{\langle \alpha, \alpha \rangle}$. In general, we will write for any $\beta \in \hfrak^*$, $\beta^\vee = \frac{2 \beta}{\langle \beta, \beta \rangle} \in \hfrak$. The reflection on $\hfrak$ with respect to the hyperplane $\ker \beta$ is:
$$ \forall h \in \hfrak, s_\beta\left( h \right) = h - \beta\left( h \right) \beta^\vee$$

The Weyl group of $G$ is defined as $W = \textrm{Norm}( H )/ H $. It acts on the torus $H$ by conjugation and hence on $\hfrak$. As a Coxeter group, it is generated by the reflections $\left( s_\alpha \right)_{\alpha \in \Delta}$. Every $w \in W$ can be written as a product $s_{i_1} \dots s_{i_k}$ for a sequence ${\bf i} = \left( i_1, \dots, i_k \right)$. A reduced word for $w \in W$ is a sequence ${\bf i}$ of shortest possible length $\ell(w)$. The set of all possible reduced words for $w$ is denoted by $R(w)$. The Weyl group has a unique longest element denoted by $w_0$ and we set $m = \ell(w_0)$.

The Bruhat decomposition states that $G$ is the disjoint union of cells:
$$ G = \bigsqcup_{\omega \in W} B^+ \omega B^+  = \bigsqcup_{\tau \in W} B \tau B^+ $$

In the largest opposite Bruhat cell $B B^+ = N H U$, every element $g$ admits a unique Gauss decomposition in the form $g = n a u$ with $ n \in N$, $a \in H$, $u \in U$. In the sequel, we will write $g = [g]_- [g]_0 [g]_+$, $[g]_- \in N$, $[g]_0 \in H$ and $[g]_+ \in U$ for the Gauss decomposition. Also $[g]_{-0} := [g]_- [g]_0$.

\begin{rmk}
\label{rmk:sln_example}
The reader unfamiliar with Lie groups can have in mind the example of $SL_n(\C)$, of rank $r = n-1$. The following matrices can be chosen as Chevalley generators. If $E_{i,j} = \left( \delta_{i,r} \delta_{j,s} \right)_{1 \leq r, s \leq n}$ are the usual elementary matrices, then $h_i = E_{i,i} - E_{i+1,i+1}$, $e_i = E_{i,i+1}$, and $f_i = E_{i+1,i}$. $\hfrak$ is the set of complex diagonal matrices with zero trace, which we identify with $\left\{ x \in \C^n \ | \ \sum x_i = 0 \right\}$. Then $H$ is the set of diagonal matrices with determinant $1$. $N$ (resp. $U$) is the set of lower (resp. upper) triangular unipotent matrices. We have:
$$ \forall t \in \C, y_i(t) = \textrm{Id} + t E_{i+1,i}, \ x_i(t) = \textrm{Id} + t E_{i,i+1}$$

The Weyl group $W$ is the group of permutation matrices and acts on $\hfrak$ by permuting coordinates. In the identification with the symmetric group acting on $n$ elements, the reflections $s_i$ are identified with transpositions $\left( i  \ i+1 \right)$. The longest word $w_0$ reorders the elements $1, 2, \dots, n$ in decreasing order.

In the case of $GL_n$, the second Bruhat decomposition is known in linear algebra as the LPU decomposition which states that every invertible matrix can be decomposed into the product of a lower triangular matrix $L$, a permutation matrix $P$ and an upper triangular matrix $U$. $P$ is unique. The largest opposite Bruhat cell corresponds to $P = id$. It is dense as it is the locus where all principal minors are non-zero.
\end{rmk}

\begin{rmk}[Cartan-Killing classification]
Complex simple Lie algebras are known, and are classified by types:
\begin{itemize}
 \item Type $A_{r-1}$: $\gfrak = \mathfrak{sl}_r$ $G = SL_r(\C)$
 \item Type $B_r$: $\gfrak = \mathfrak{so}_{2r+1}$ $G = SO_{2r+1}(\C)$
 \item Type $C_r$: $\gfrak = \mathfrak{sp}_{r}$ $G = Sp_{r}(\C)$
 \item Type $D_r$: $\gfrak = \mathfrak{so}_{2r}$ $G = SO_{2r}(\C)$
 \item Exceptional types: $E_6$, $E_7$, $E_8$, $F_4$, $G_2$.
\end{itemize}
The number in subscript indicates the rank.
\end{rmk}

The set of roots is denoted by
$$\Phi := \left\{ \beta \in \hfrak^* \ | \ s_\beta \in W \right\} \ .$$
Roots split into positive and negative roots $\Phi = \Phi^+ \bigsqcup \Phi^-$, with $\Phi^+$ being the set of roots that can be written as a positive sum of simple roots. Reduced expressions of Weyl group elements give convex orderings of positive roots (see \cite{bib:Humphreys90}):
\begin{lemma}
\label{lemma:positive_roots_enumeration}
Let $(i_1, \dots, i_k)$ be a reduced expression of $w \in W$. Then for $j=1, \dots, k$:
$$\beta_{{\bf i}, j} := s_{i_1} \dots s_{i_{j-1}} \alpha_{i_j}$$
produces all the positive roots in the set of inversions of $w$:
$$ \Inv\left( w \right) := \left\{ \beta \in \Phi^+, \  w \beta \in \Phi^{-} \right\} \ .$$
For $w = w_0$, it produces all positive roots i.e $\Inv\left( w_0 \right) = \Phi^+$.
\end{lemma}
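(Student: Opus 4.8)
This is a classical fact about root systems and reduced words (see e.g. \cite{bib:Humphreys90}); the plan is the standard one. The strategy is to show that $(\beta_{{\bf i},j})_{1\le j\le k}$ consists of $k$ pairwise distinct positive roots, each belonging to the inversion set, and then to conclude by the cardinality identity $\#\Inv(w)=\ell(w)$. I will freely use three standard facts: the \emph{sign criterion} --- for $v\in W$ and a simple root $\alpha_i$, one has $\ell(v s_i)=\ell(v)+1$ if and only if $v\alpha_i\in\Phi^+$; the stability of reducedness under contiguous factors of a reduced word (in particular initial segments are reduced, and the reversal of a reduced word for $w$ is a reduced word for $w^{-1}$); and $\#\Inv(w)=\ell(w)$.

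The steps, in order. \emph{(i) Positivity.} Since $(i_1,\dots,i_j)$ is reduced, $\ell\bigl((s_{i_1}\cdots s_{i_{j-1}})s_{i_j}\bigr)$ exceeds $\ell(s_{i_1}\cdots s_{i_{j-1}})$ by one, so the sign criterion applied to $v=s_{i_1}\cdots s_{i_{j-1}}$ yields $\beta_{{\bf i},j}=v\alpha_{i_j}\in\Phi^+$. \emph{(ii) Membership in the inversion set.} A one-line computation with $w^{-1}=s_{i_k}\cdots s_{i_1}$ reduces $w^{-1}\beta_{{\bf i},j}$ to $-\,s_{i_k}\cdots s_{i_{j+1}}\alpha_{i_j}$; since $(i_k,\dots,i_{j+1},i_j)$ is an initial segment of a reduced word for $w^{-1}$, part (i) applies again and shows this is negative, hence $\beta_{{\bf i},j}$ is an inversion. (With the displayed convention one naturally lands in $\Inv(w^{-1})$, which coincides with $\Inv(w)$ in the only case used below, $w=w_0=w_0^{-1}$; the mirror family $s_{i_k}\cdots s_{i_{j+1}}\alpha_{i_j}$ accounts for $\Inv(w)$ in general.) \emph{(iii) Distinctness.} If $\beta_{{\bf i},p}=\beta_{{\bf i},q}$ with $p<q$, cancelling $s_{i_1}\cdots s_{i_{p-1}}$ on the left gives $s_{i_{p+1}}\cdots s_{i_{q-1}}\alpha_{i_q}=-\alpha_{i_p}\in\Phi^-$, and the sign criterion then forces a length drop incompatible with reducedness of the contiguous factor $(i_{p+1},\dots,i_q)$ of ${\bf i}$. \emph{(iv) Conclusion.} Steps (i)--(iii) exhibit $k$ distinct positive roots inside a set of size $\#\Inv(w)=\ell(w)=k$, hence $\{\beta_{{\bf i},j}:1\le j\le k\}=\Inv(w)$. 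For $w=w_0$ this equality, together with $\ell(w_0)=m=\#\Phi^+$ and $\Inv(w_0)\subseteq\Phi^+$, forces $\Inv(w_0)=\Phi^+$, so the family enumerates all positive roots.

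I do not expect a genuine obstacle: this is textbook material, and every step is mechanical once the sign criterion is available. The one place that calls for a (very small) idea rather than pure bookkeeping is the distinctness step (iii), where a coincidence among the $\beta_{{\bf i},j}$ must be turned into a forbidden length drop along a sub-word. An equivalent, equally short route is an induction on $\ell(w)$ that peels the last letter $s_{i_k}$ off on the right; it merely rearranges the same computations.
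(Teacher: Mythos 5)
Your proof is correct and is the standard textbook argument. The paper does not prove Lemma \ref{lemma:positive_roots_enumeration} itself; it defers to \cite{bib:Humphreys90}, where exactly this chain of steps (sign criterion, contiguous factors of a reduced word are reduced, $\#\Inv(w)=\ell(w)$, pigeonhole) appears.

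One remark worth keeping: you have correctly spotted that with the convention $\beta_{{\bf i},j}:=s_{i_1}\cdots s_{i_{j-1}}\alpha_{i_j}$, the family enumerates $\{\beta\in\Phi^+ : w^{-1}\beta\in\Phi^-\}$, i.e.\ $\Inv(w^{-1})$ in the paper's notation, rather than $\Inv(w)=\{\beta\in\Phi^+ : w\beta\in\Phi^-\}$ as stated. (A quick sanity check in $A_2$ with $w=s_1s_2$ gives $\{\beta_1,\beta_2\}=\{\alpha_1,\alpha_1+\alpha_2\}=\Inv(w^{-1})\ne\Inv(w)=\{\alpha_2,\alpha_1+\alpha_2\}$.) This is an inessential slip in the lemma's statement: the two sets have the same cardinality $\ell(w)$, and the paper only ever invokes the lemma for $w=w_0$, which is an involution, so $\Inv(w_0)=\Inv(w_0^{-1})=\Phi^+$ and nothing downstream is affected. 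Your parenthetical handling of this point is exactly right.
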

When ${\bf i} \in R(w_0)$, we call the sequence $\left( \beta_{{\bf i}, j} \right)_{1 \leq j \leq m} $ an enumeration of positive roots. When the chosen reduced expression is obvious from context, we will drop the subscript ${\bf i}$. In appendix, we list positive root enumerations for rank $2$ systems.

\paragraph{Total positivity:} In the classical case of $GL_n$, a totally non-negative matrix is a matrix such that all its minors are non-negative. Thanks to the classical Cauchy-Binet formula, totally non-negative matrices $\left( GL_n \right)_{\geq 0}$ form a semi-group. In \cite{bib:Lusztig94}, Lusztig generalized total positivity to reductive groups motivated by the theory of canonical bases. At that level of generality, taking the semi-group property as a definition is simpler. The totally non-negative part of $G$ is denoted $G_{\geq 0}$ and is defined as the semi-group generated by the following sets:

\begin{itemize}
 \item The semi-group $H_{>0} := A = exp\left( \afrak \right) \approx \left( \R_+^* \right)^r$
 \item The semi-group generated by $\left\{ x_\alpha(t), t>0, \alpha \in \Delta \right\}$: $U_{\geq 0}$
 \item The semi-group generated by $\left\{ y_\alpha(t), t>0, \alpha \in \Delta \right\}$: $N_{\geq 0}$
\end{itemize}

We will be interested in parametrizations of $G_{\geq 0}$. Lusztig proved that totally non-negative elements admit a Gauss decomposition made of totally non-negative elements.
\begin{thm}[\cite{bib:Lusztig94} Lemma 2.3]
\label{thm:totally_positive_gauss_decomposition}
Any element $g \in G_{\geq 0}$ has a unique Gauss decomposition $g = n a u$ with $n \in N_{\geq 0}$, $a \in A$ and $u \in U_{\geq 0}$.
\end{thm}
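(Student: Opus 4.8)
The plan is to reduce the statement to a straightening (rewriting) procedure on products of the distinguished generators. First, uniqueness is automatic: the multiplication map $N\times H\times U\to NHU=BB^+$ is a bijection onto the big Bruhat cell, so once $g$ is known to lie in $NHU$ its three Gauss factors are forced to be $[g]_-$, $[g]_0$, $[g]_+$, and the theorem asserts nothing beyond the total non-negativity of these three. Thus the real content is an existence statement, and since $G_{\geq 0}$ is by definition the semigroup generated by $\{x_\alpha(t):t>0\}$, $\{y_\alpha(t):t>0\}$ and $A$, it is enough to treat a finite product $g=g_1\cdots g_\ell$ of such elementary factors; I would induct on $\ell$.

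Three elementary moves drive the rewriting. (i) $A$ normalizes both $U_{\geq 0}$ and $N_{\geq 0}$ while preserving positivity, since $a\,x_\alpha(t)\,a^{-1}=x_\alpha(\alpha(a)\,t)$ and $a\,y_\alpha(t)\,a^{-1}=y_\alpha(\alpha(a)^{-1}t)$, and the multiplicative character $\alpha$ is strictly positive on $A=\exp(\afrak)$ because roots take real values on $\afrak$. (ii) For $\alpha\neq\beta$ the Chevalley relation $[e_\alpha,f_\beta]=0$ forces $x_\alpha(t)$ and $y_\beta(s)$ to commute in $G$. (iii) For a single index $\alpha$, the homomorphism $\phi_\alpha\colon SL_2\to G$ carrying the standard unipotent subgroups to $x_\alpha$ and $y_\alpha$ transports the $2\times2$ identity
\[
\begin{pmatrix}1&t\\0&1\end{pmatrix}\begin{pmatrix}1&0\\s&1\end{pmatrix}
=\begin{pmatrix}1&0\\ \tfrac{s}{1+ts}&1\end{pmatrix}\begin{pmatrix}1+ts&0\\0&\tfrac{1}{1+ts}\end{pmatrix}\begin{pmatrix}1&\tfrac{t}{1+ts}\\0&1\end{pmatrix}
\]
into the group identity
\[
x_\alpha(t)\,y_\alpha(s)=y_\alpha\!\big(\tfrac{s}{1+ts}\big)\,\exp\!\big(\log(1+ts)\,\alpha^\vee\big)\,x_\alpha\!\big(\tfrac{t}{1+ts}\big),
\]
in which, for $t,s>0$, every right-hand factor is totally non-negative because $1+ts>0$.

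For the inductive step, write $g=g'g_\ell$ with $g'=n'a'u'$ already of the claimed form. If $g_\ell\in A$ or $g_\ell=x_\alpha(t)$, move (i) lets one slide $g_\ell$ to its correct slot (or absorb it into $u'$) and finish immediately. The only genuine case is $g_\ell=y_\alpha(t)$, and it reduces the whole problem to the inclusion $U_{\geq 0}\,N_{\geq 0}\subseteq N_{\geq 0}\,A\,U_{\geq 0}$. I would establish this by a secondary induction on the number $k$ of elementary factors $x_\beta(r)$ in a fixed expression of the element of $U_{\geq 0}$ at hand: for $k=0$ the element is $e$ and the claim reduces to $y_\alpha(t)\in N_{\geq 0}$; for $k\geq 1$ one peels off the rightmost factor $u'=u''x_\beta(r)$ and uses move (ii) when $\beta\neq\alpha$ (so $y_\alpha$ passes to the left of $x_\beta$) and move (iii) when $\beta=\alpha$ (so $x_\alpha y_\alpha$ becomes $y_\alpha$, a factor in $A$, and $x_\alpha$). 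In both cases the remaining block $u''\,y_\alpha(\cdot)$ has strictly fewer $x$-factors, so the secondary hypothesis applies; the factor in $A$ produced by move (iii) is then carried to the middle slot by move (i), and the trailing $x_\alpha$-factor is absorbed into the $U_{\geq 0}$-part.

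The step I expect to demand the most care is the bookkeeping of this double induction: one must track how the leftover $A$- and $U_{\geq 0}$-factors spawned whenever move (iii) fires get routed into their designated components, checking that each is genuinely totally non-negative and that the recursion terminates. A further point worth emphasising is that the theorem tacitly contains the (a priori non-obvious) assertion $G_{\geq 0}\subseteq NHU$; this is not an input but a by-product of the straightening procedure. Everything else reduces to the $SL_2$ computation displayed above and to the positivity of the characters of $A$.
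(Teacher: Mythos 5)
Your argument is correct, and it reproduces the standard reduction-to-$SL_2$ strategy that Lusztig uses in the cited Lemma 2.3 of \cite{bib:Lusztig94}: the paper itself offers no proof of this statement, simply deferring to Lusztig, whose argument rests on exactly the three moves you identify (positivity of characters on $A$, commutation of $x_\alpha$ with $y_\beta$ for $\alpha\neq\beta$, and the rank-one Gauss identity), combined into an induction on the number of elementary factors. Your computations are right and the double induction terminates as you claim, so this is a faithful reconstruction of the proof the paper implicitly relies on.
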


Therefore, one can focus on the parametrizations of $U_{\geq 0}$. The following theorem generalizes a result of Whitney \cite{bib:Whitney52} from $GL_n$ to $G$. It says that a totally positive element in $U$ can be written as a product of elementary matrices, depending on where it falls in the Bruhat decomposition.
\begin{thm}[\cite{bib:Lusztig94} Proposition 2.7, \cite{bib:BZ97} Proposition 1.1]
\label{thm:totally_positive_parametrizations}
For any $w \in W$ with $k=\ell(w)$, every reduced word $\mathbf{i} = (i_1, \dots, i_k)$ in $R(w)$ gives rise to a parametrization of $U^{w}_{>0} := U_{\geq 0} \cap B w B$ by:
$$
 \begin{array}{cccc}
x_{\mathbf{i}}: & \mathbb{R}_{>0}^k & \rightarrow & U^{w}_{>0}\\
                & (t_1, \dots, t_k) & \mapsto     & x_{i_1}(t_1) \dots x_{i_k}(t_k)
 \end{array}
$$
\end{thm}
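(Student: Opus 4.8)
I would argue by induction on $k=\ell(w)$, the case $k=0$ being trivial since $U\cap B=\{e\}$. Two standard facts will be used repeatedly: the rank-one identity $x_i(t)\in Bs_iB$ for $t\neq 0$ (immediate in $SL_2$), and the Bruhat multiplication rule $(BvB)(Bs_jB)=Bvs_jB$ whenever $\ell(vs_j)=\ell(v)+1$. First, $x_{\mathbf i}$ does map into $U^w_{>0}$: the element $x_{i_1}(t_1)\cdots x_{i_k}(t_k)$ lies in $U_{\geq 0}$ directly from the definition of $U_{\geq 0}$ as the semi-group generated by the $x_\alpha(s)$, $s>0$; and since every initial segment $(i_1,\dots,i_j)$ is again a reduced word, multiplying the factors in one at a time and applying the Bruhat rule gives $x_{\mathbf i}(t)\in BwB$ for every $t\in\R_{>0}^k$. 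Hence $x_{\mathbf i}(\R_{>0}^k)\subseteq U^w_{>0}$.

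The rest of the theorem reduces to an \emph{extraction lemma}. Fix $w\neq e$, let $i:=i_1$ be the first letter of a reduced word of $w$, and set $w':=s_iw$, so $\ell(w')=k-1$. The lemma asserts: \emph{for every $u\in U^w_{>0}$ there is a unique $t>0$ such that $x_i(-t)u\in U_{\geq 0}$, and then moreover $x_i(-t)u\in U^{w'}_{>0}$.} Granting this, both injectivity and surjectivity of $x_{\mathbf i}$ follow by induction: for surjectivity, given $u\in U^w_{>0}$ one extracts $t_1$ from the lemma, applies the inductive hypothesis to $x_i(-t_1)u\in U^{w'}_{>0}$ with the reduced word $(i_2,\dots,i_k)\in R(w')$, and reassembles $u=x_{\mathbf i}(t_1,\dots,t_k)$; for injectivity, if $x_{\mathbf i}(t)=x_{\mathbf i}(s)=u$ then $t_1$ and $s_1$ are both \emph{the} value attached to $u$ by the lemma, so $t_1=s_1$, and one concludes by induction.

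To prove the extraction lemma I would study $I:=\{t\geq 0:\ x_i(-t)u\in U_{\geq 0}\}$. It contains $0$; it is closed, since $U_{\geq 0}$ is a closed sub-semi-group of $U$; and it is an interval $[0,t_1]$, because $t'\in I$ and $0\leq t''\leq t'$ imply $x_i(-t'')u=x_i(t'-t'')\,x_i(-t')u$ is a product of two elements of $U_{\geq 0}$. That $I$ is bounded, and that $t_1>0$, are dictated by the position of $u$ in the Bruhat cell: some generalized minor depends affinely on $t$ and eventually becomes negative, while $u\in BwB$ with $w'\neq w$ forbids $x_i(0)u=u$ from already lying in $Bw'B$. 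The decisive point is that at the right endpoint the element $x_i(-t_1)u$, while still totally non-negative, has dropped out of $BwB$ and in fact lands exactly in $Bw'B$. I would prove this by transporting everything into the rank-one subgroup $\langle x_i(\cdot),y_i(\cdot)\rangle\cong SL_2$ (or $PGL_2$): using the commutation relations among the $x_\alpha$ together with Lusztig's theorem \ref{thm:totally_positive_gauss_decomposition} on totally non-negative Gauss decompositions, one reduces to the explicit $SL_2$ computation of the flavour recalled in section \ref{section:review_matsumoto_yor}, where the maximal admissible $t$ visibly moves the element down by precisely the simple reflection $s_i$.

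The main obstacle is exactly this rank-one identification: it is the only step where the semi-group definition of total non-negativity, rather than plain Bruhat-cell combinatorics, is genuinely needed. Everything else — well-definedness, the passage from the lemma to injectivity and surjectivity, the interval structure of $I$ — is formal. Finally, if one wants $x_{\mathbf i}$ to be not merely a bijection but a homeomorphism (in fact a real-biregular isomorphism onto $U^w_{>0}$), it suffices to observe that its inverse is given by the subtraction-free ratios of generalized minors of the Chamber Ansatz of \cite{bib:BZ97}, which are continuous.
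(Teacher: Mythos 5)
First, a note on context: the paper cites this result to Lusztig and Berenstein--Zelevinsky and does not prove it, so there is no ``paper's own proof'' to compare against; I assess your proposal on its own merits. Your architecture --- the image lands in $U^w_{>0}$ via the Bruhat multiplication rule, and then an inductive extraction lemma delivers both injectivity and surjectivity --- is a recognizable and reasonable plan. But the extraction lemma is mis-stated: there is no unique $t>0$ with $x_i(-t)u\in U_{\geq0}$. You yourself observe a few lines later that the set $I$ of admissible $t$ is an interval $[0,t_1]$, so every $t\in(0,t_1]$ works. What should be claimed is uniqueness of the $t$ with $x_i(-t)u\in U^{w'}_{>0}$, namely $t=t_1$; and the inductive injectivity argument then needs the additional observation that a tail $x_{i_2}(s_2)\cdots x_{i_k}(s_k)$ with positive entries lies in $Bw'B$, in order to pin down $s_1=t_1$.

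The genuine gap is the ``decisive point'' you flag yourself: that $t_1>0$ and that $x_i(-t_1)u$ lands exactly in $Bw'B$. For $w\neq w_0$ the element $u$ sits on the boundary of $U_{\geq0}$ (the cell $U^w_{>0}$ has dimension $\ell(w)<\ell(w_0)=\dim U_{\geq0}$), so positivity of $t_1$ is not automatic; a priori $I$ could be $\{0\}$. The rank-one reduction you invoke is not spelled out enough to be checkable --- restricting a general $u\in U$ to $\langle x_i,y_i\rangle\cong SL_2$, or transporting the total-nonnegativity constraints there, is exactly where the real content lies --- and the most natural route to $t_1>0$, namely exhibiting a factorization of $u$ beginning with $x_i(\text{positive})$, is precisely the surjectivity you are trying to prove, so circularity threatens. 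The published proofs sidestep this: Lusztig establishes the rank-two transition maps $R_{\bf i, i'}$ as explicit subtraction-free rational bijections, invokes Matsumoto--Tits to make the image $x_{\bf i}(\R_{>0}^k)$ independent of the reduced word, and shows that any product of positive elementary factors can be rewritten in such a form; Berenstein--Zelevinsky instead invert $x_{\bf i}$ explicitly via subtraction-free ratios of generalized minors (the Chamber Ansatz), which yields positivity, uniqueness and continuity at once. To save your induction you would need to import one of these ingredients --- for instance, show that a suitable generalized minor of $x_i(-t)u$ is affine in $t$ with strictly negative linear coefficient whenever $u\in U^w_{>0}$ and $s_iw<w$, which simultaneously bounds $I$ and identifies $t_1$ with the unique zero, placing $x_i(-t_1)u$ in $Bw'B$ --- rather than appeal to an unexplained $SL_2$ computation.
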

We also define the transition maps:
$$R_{{\bf i}, {\bf i'}} := x_{\bf i'}^{-1} \circ x_{\bf i}$$
which play a particularly important role.

For the purposes of this paper, we will only need the case of $w=w_0$ and we will call such a parametrization the Lusztig parametrization of $U^{w_0}_{>0}$. The positive reals $\left( t_1, \dots, t_m \right)$ such that $u = x_{\bf i}\left( t_1, \dots, t_m \right) \in U^{w_0}_{>0}$ will be called Lusztig parameters of $u$, the dependence in ${\bf i}$ being implicit.

\paragraph{On Lusztig's canonical basis:} We will only be interested in the parametrizations of the canonical basis. The combinatorics of total posivity in the group $G$ are related to the combinatorics of Lusztig's canonical basis for the dual group, after a tropicalization procedure $\left[ . \right]_{trop}$. More is said on tropicalization in section \ref{section:identities} where $\left[ R_{\bf i, i'} \right]_{trop}$ is defined. For now, let us record the following:

\begin{thm}[see \cite{bib:Lusztig90a}, \cite{bib:Lusztig90b}]
\label{thm:lusztig_canonical_basis}
There is a canonical basis $\Lc$ of $\Uc_q\left( \nfrak^\vee \right)$, the lower unipotent part of the dual quantum group $\Uc_q\left( \gfrak^\vee \right)$. It has a natural parametrization associated to every reduced word ${\bf i} \in R\left( w_0 \right)$:
 $$ {\bf \Lc}_{\bf i}: \N^{\ell(w_0)} \longrightarrow \Lc$$
and the changes of parametrizations are given by:
 $$\left[ R_{\bf i, i'} \right]_{trop} = {\bf \Lc}_{\bf i'}^{-1} \circ {\bf \Lc}_{\bf i} \ .$$
\end{thm}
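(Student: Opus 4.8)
I would reconstruct Lusztig's canonical basis from PBW-type bases, in four steps. Write $U := \Uc_q\left( \nfrak^\vee \right)$ with Chevalley generators $F_i$ ($i \in \Delta$) and divided powers $F_i^{(n)} = F_i^n / [n]_q!$; set $\Ac := \Z[q,q^{-1}]$ and let ${}_{\Ac}U$ be Lusztig's integral $\Ac$-form, generated over $\Ac$ by the $F_i^{(n)}$. Let $a \mapsto \bar a$ denote the bar involution of $U$, i.e. the $\Q$-algebra automorphism with $\overline{q} = q^{-1}$ and $\overline{F_i} = F_i$. (Langlands duality only changes which root datum is fed in and plays no further role.) \emph{Step 1 (PBW bases).} Fix ${\bf i} = (i_1, \dots, i_m) \in R(w_0)$ and let $\left( \beta_{{\bf i},j} \right)_{1 \le j \le m}$ be the enumeration of $\Phi^+$ of Lemma \ref{lemma:positive_roots_enumeration}. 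Using Lusztig's braid operators $T_i$ of $\Uc_q\left( \gfrak^\vee \right)$ (normalized so as to preserve $U$), define quantum root vectors $F_{\beta_{{\bf i},j}} := T_{i_1} \cdots T_{i_{j-1}}\left( F_{i_j} \right)$ and PBW monomials $F_{\bf i}^{(c)} := F_{\beta_{{\bf i},1}}^{(c_1)} \cdots F_{\beta_{{\bf i},m}}^{(c_m)}$ for $c \in \N^m$. The Levendorskii--Soibelman straightening relations — which express, for $j < k$, the $q$-commutator of $F_{\beta_k}$ and $F_{\beta_j}$ as an $\Ac$-combination of ordered monomials in the $F_{\beta_l}$, $j < l < k$ — show that $\left\{ F_{\bf i}^{(c)} : c \in \N^m \right\}$ is an $\Ac$-basis of ${}_{\Ac}U$, each element being homogeneous for the weight grading.

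\emph{Step 2 (canonical basis, fixed ${\bf i}$).} Inside each weight space the index set $\left\{ c \in \N^m : \sum_j c_j \beta_{{\bf i},j} = \nu \right\}$ is finite; order it lexicographically. A direct computation with the straightening relations gives the unitriangularity of the bar involution,
$$\overline{F_{\bf i}^{(c)}} = F_{\bf i}^{(c)} + \sum_{c' < c} r_{c,c'}(q)\, F_{\bf i}^{(c')}, \qquad r_{c,c'} \in \Ac.$$
Lusztig's lemma on bar-invariant bases then produces a unique family $\left\{ b_{\bf i}^{(c)} : c \in \N^m \right\}$ with $\overline{b_{\bf i}^{(c)}} = b_{\bf i}^{(c)}$ and $b_{\bf i}^{(c)} \in F_{\bf i}^{(c)} + \sum_{c' < c} q\, \Z[q]\, F_{\bf i}^{(c')}$. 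Set ${\bf \Lc}_{\bf i}(c) := b_{\bf i}^{(c)}$.

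\emph{Step 3 (independence of ${\bf i}$; the main obstacle).} Let $\Lc^{0} \subset U$ be the $\Z[q]$-lattice spanned by $\left\{ F_{\bf i}^{(c)} \right\}$. One must show that $\Lc^{0}$ does not depend on ${\bf i}$ and that the images of the $F_{\bf i}^{(c)}$ in $\Lc^{0}/q\Lc^{0}$ form a $\Z$-basis independent of ${\bf i}$ up to a bijection of index sets; combined with bar-invariance and the congruence of Step 2, this pins down a single subset $\Lc \subset {}_{\Ac}U$ and makes each ${\bf \Lc}_{\bf i} : \N^m \to \Lc$ a bijection. In finite type this can be carried out either \`a la Lusztig, realizing $U$ (simply-laced case) through perverse sheaves on representation varieties of the associated quiver with the $F_{\bf i}^{(c)}$ as standard objects and then folding, or \`a la Kashiwara, by the ``grand loop'' induction that simultaneously builds the crystal lattice $\Lc^{0}$, its crystal $\Lc^{0}/q\Lc^{0}$ and the Kashiwara operators. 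From this one reads off that the relabeling $R^{\mathrm{PBW}}_{\bf i, i'} := {\bf \Lc}_{\bf i'}^{-1} \circ {\bf \Lc}_{\bf i} : \N^m \to \N^m$ records exactly the change of PBW parameters, $F_{\bf i}^{(c)} \equiv F_{\bf i'}^{\left( R^{\mathrm{PBW}}_{\bf i, i'}(c) \right)} \pmod{q\Lc^{0}}$. I expect this to be the genuine obstacle: it is the only point where non-elementary input (perverse sheaves, or the positivity estimates of the grand loop) seems unavoidable.

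\emph{Step 4 (matching with $\left[ R_{\bf i, i'} \right]_{trop}$).} It remains to identify $R^{\mathrm{PBW}}_{\bf i, i'}$ with the tropicalization of the transition map $R_{\bf i, i'}$ of Theorem \ref{thm:totally_positive_parametrizations}. By Tits' theorem, ${\bf i}$ and ${\bf i'}$ are linked by a chain of braid moves, each replacing a consecutive block of parameters indexed by the reduced word of the longest element of a rank-$2$ parabolic $\langle s_i, s_j \rangle$; both $R^{\mathrm{PBW}}$ and $\left[ R \right]_{trop}$ leave the parameters outside such a block unchanged, so it suffices to treat $w = w_0$ in the rank-$2$ systems $A_1 \times A_1$, $A_2$, $B_2$ and $G_2$. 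For each of these, $R_{\bf i, i'}$ is an explicit subtraction-free rational map (computable from the root enumerations listed in the appendix), so $\left[ R_{\bf i, i'} \right]_{trop}$ is an explicit piecewise-linear map; on the other side, the rank-$2$ straightening relations let one compute $R^{\mathrm{PBW}}_{\bf i, i'}$ by hand — e.g. in type $A_2$ one finds $(a,b,c) \mapsto \left( b + c - \min(a,c),\ \min(a,c),\ a + b - \min(a,c) \right)$, the tropical beta-gamma move. Checking the four cases yields the equality and completes the proof.
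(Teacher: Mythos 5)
The paper offers no proof of this theorem: it is stated purely as a citation to Lusztig's original papers, so you are not competing against an in-paper argument but reconstructing the literature. Your four-step plan correctly recovers the standard route — PBW bases via braid operators and Levendorskii--Soibelman relations, bar-unitriangularity, Lusztig's lemma, then independence of the reduced word via perverse sheaves (Lusztig) or the grand loop (Kashiwara) — and you rightly identify Step 3 as the place where non-elementary input becomes unavoidable. Two remarks to sharpen the plan. First, the identification of the piecewise-linear change of Lusztig parametrizations with the tropicalization $\left[R_{\bf i,i'}\right]_{trop}$ of the Berenstein--Zelevinsky transition maps is \emph{not} contained in the two 1990 Lusztig papers the theorem cites; those papers construct the basis and its parametrizations but the bridge to subtraction-free rational maps on $U^{w_0}_{>0}$ was made explicit later (Lusztig's total positivity paper, Berenstein--Zelevinsky). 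So your Step 4 is genuinely needed, not a gloss on the cited references, and the rank-two reduction via Tits' theorem followed by hand computation is exactly how one closes that gap. Second, since the canonical basis here sits in $\Uc_q\left(\nfrak^\vee\right)$ for the Langlands dual while $R_{\bf i,i'}$ lives on $U^{w_0}_{>0}\subset G$, the parenthetical ``Langlands duality plays no further role'' hides a labeling swap in the $B_2/C_2$ case; it is harmless because $B_2$ and $C_2$ are the same abstract rank-$2$ root system with roles of long and short roots exchanged, but a full write-up should say so. Modulo the deep input you flag in Step 3, the plan is sound and matches what the cited references actually do.
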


\paragraph{Brownian motion(s):} Since $\afrak$ is made into an Euclidean space thanks to the Killing form $\langle \cdot, \cdot \rangle$, there is a natural notion of Brownian motion on $\afrak$. Indeed, $\langle \cdot, \cdot \rangle$ is a scalar product once restricted to $\afrak$ and the induced norm is denoted by $||\cdot||$. Let $\left( \Omega, \Ac, \P \right)$ be the canonical probability space of a Brownian motion $X$ on $\afrak$. For $\mu \in \afrak$, the Brownian motion with drift $\mu$ is denoted by:
$$ X_t^{(\mu)} := X_t + \mu \ t \ .$$
Brownian motion with drift $\mu$ and starting position $x_0 \in \afrak$ is written:
$$ X_t^{(x_0, \mu)} := x_0 + X_t + \mu \ t \ .$$

One obtains a left-invariant Brownian motion $B_t(X^{(\mu)})$ on $B$ by solving the following stochastic differential equation driven by any path $X$. The symbol $\circ$ indicates that the SDE has to be understood in the Stratonovitch sense:
\begin{align}
\label{lbl:process_B_sde_stratonovich}
\left\{ \begin{array}{ll}
dB_t(X^{(\mu)}) = B_t(X^{(\mu)}) \circ \left( \sum_{\alpha \in \Delta} \half \langle \alpha, \alpha \rangle f_\alpha dt + dX^{(\mu)}_t\right) \\
 B_0(X^{(\mu)}) = \textrm{ Id }
\end{array} \right.
\end{align}

The SDE can be explicitly solved (\cite{bib:BBO}):
\begin{align}
\label{eqn:process_B_explicit}
B_t(X^{(\mu)}) & = 
\left( \sum_{ \substack{k \geq 0 \\ i_1, \dots, i_k } } f_{i_1} \dots f_{i_k} \int_{ t \geq t_k \geq \dots \geq t_1 \geq 0}  \prod_{j=1}^k dt_j \frac{|| \alpha_{i_j} ||^2}{2} e^{ -\alpha_{i_j}(X^{(\mu)}_{t_j}) } \right) e^{X^{(\mu)}_t}
\end{align} 

Clearly, $B_t(X^{(\mu)})$ has an $NA$ decomposition is given by:
\begin{align}
\label{eqn:process_N_explicit}
N_t(X^{(\mu)}) & = 
\sum_{ \substack{k \geq 0 \\ i_1, \dots, i_k } } \int_{ t \geq t_k \geq \dots \geq t_1 \geq 0} e^{ -\alpha_{i_1}(X^{(\mu)}_{t_1}) \dots -\alpha_{i_k}(X^{(\mu)}_{t_k}) } \frac{|| \alpha_{i_1} ||^2}{2} \dots \\
               & \quad \quad
\dots \frac{|| \alpha_{i_k} ||^2}{2} f_{i_1} \dots f_{i_k} dt_1 \dots dt_k
\end{align} 
\begin{align}
\label{eqn:process_A_explicit}
A_t\left( X^{(\mu)} \right) & = e^{X^{(\mu)}_t}
\end{align} 

\begin{rmk}
\label{rmk:ADE}
The presence of half squared norms $\frac{|| \alpha ||^2}{2}$ is here to account for the fact that time does not flow in the same fashion for all roots. In the simply-laced groups ($ADE$-types), one can choose all roots to be the same length, hence choosing $\frac{|| \alpha ||^2}{2} = 1$ for all $\alpha$.
\end{rmk}

\begin{rmk}
In the $A_1$ case only, we opt out of the normalization made in the previous remark. Indeed, the classical choice for the only root is $\alpha = 2$. Hence the factor $\frac{|| \alpha ||^2}{2} = 2$ in the following example of $SL_2$.
\end{rmk}

\begin{example}[$SL_2$ - $A_1$ type]
Let $X^{(\mu)}$ be a Brownian motion with drift $\mu$ on $\R$. The SDE is:
$$ dB_t(X^{(\mu)}) = B_t(X^{(\mu)}) \circ \begin{pmatrix} dX^{(\mu)}_t & 0\\ 2 dt & -dX^{(\mu)}_t \end{pmatrix}$$
and its solution is:
$$ B_t(X^{(\mu)}) = \begin{pmatrix} e^{X^{(\mu)}_t} & 0\\ e^{X^{(\mu)}_t} \int_0^t 2 e^{-2 X^{(\mu)}_s} \ ds & e^{-X^{(\mu)}_t} \end{pmatrix}$$
\end{example}

\begin{example}[$SL_n$ - $A_{n-1}$ type]
As in remark \ref{rmk:sln_example}, let $X$ be a Brownian motion with drift $\mu$ on $\left\{ x \in \R^n \ | \ \sum x_i = 0 \right\}$. For notational reasons, we drop the superscript $(\mu)$ and put indices as exponents. The SDE becomes:
$$ dB_t(X^{(\mu)}) = B_t(X^{(\mu)}) \circ \begin{pmatrix} dX^1_t & 0      & 0      & \cdots     & 0     \\
                                                          dt     & dX^2_t & 0      & \ddots     & \vdots\\
                                                          0      & dt     & \ddots & \ddots     & 0     \\
                                                          \vdots & \ddots & \ddots & dX^{n-1}_t & 0     \\
                                                          0      & \cdots & 0      & dt         & dX^n_t\\
                                           \end{pmatrix}$$
and its solution $B_t(X^{(\mu)})$ is given by:
$$ 
		    \begin{pmatrix} e^{X^1_t}                             & 0                                     & 0      & \cdots        \\
				    e^{X^1_t} \int_0^t e^{X^2_s-X^1_s} ds & e^{X^2_t}                             & 0      & \cdots        \\
				    e^{X^1_t} \int_0^t e^{X^2_{s_1}-X^1_{s_1}} ds_1 \int_0^{s_1} e^{X^2_{s_2}-X^1_{s_2}} ds_2 &
				    e^{X^2_t} \int_0^t e^{X^3_s-X^2_s} ds &
				    e^{X^3_t}                             & \ddots                 \\
				    \vdots                                & \vdots                                & \ddots & \ddots  
		      \end{pmatrix}$$
\end{example}

The process $A_t(X^{(\mu)})$ is a multiplicative Brownian motion on $A \approx \left( \R_+^* \right)^r$ and does not converge. The main theorems concern the $N$ part, which converges when the drift is inside the Weyl chamber, and we are able to give an explicit formula for the exit law. On a few occasions, we will need to consider the path $\pi$ driving the flow $B_.\left( \pi \right)$ to be any path. The previous construction carries verbatim if $\pi$ is a semi-martingale.

\subsection{Conditional representation theorem}

The symbol $\eqlaw$ stands for equality in law between random variables. Moreover, a generic gamma random variable with parameter $a>0$ is denoted $\gamma_a$.
$$ \P\left( \gamma_a \in dx \right) = \frac{1}{\Gamma(a)} x^{a-1} e^{-x} \mathds{1}_{\R_+}\left( x \right) dx$$
where $\Gamma$ is the gamma Euler function.

Let $C\left( \R_+, \afrak \right)$ be the space of continuous functions with values in $\afrak$. The following theorem defines our path transform:

\begin{thm}[ \cite{bib:BBO2} Proposition 6.4 , \cite{bib:chh14a} section 5 ]
\label{thm:path_transform_def}
For any totally non-negative $g \in U_{\geq 0}$, there is a path transform $T_g: C\left( \R_+, \afrak \right) \longrightarrow C\left( \R_+, \afrak \right)$ such that for any path $X$:
$$ \forall t \geq 0, B_t\left( T_g X \right) = \left[ g B_t\left( X \right) \right]_{-0}$$
This transform is well-defined in the sense that the Gauss decomposition always exists for $g \in U_{\geq 0}$.
\end{thm}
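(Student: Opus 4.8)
The plan is to realize the $B$-valued process $t\mapsto [\,g\,B_t(X)\,]_{-0}$ as the solution $B_\cdot(\pi)$ of the defining equation \eqref{lbl:process_B_sde_stratonovich}, driven by a new path $\pi$ obtained from $X$ by an explicit integral transform; total positivity will be needed only to guarantee that the relevant Gauss decomposition never degenerates. Write $\bfrak:=\nfrak\oplus\hfrak$ for the Lie algebra of $B$ and let $\mathrm{pr}_{\bfrak}:\gfrak=\bfrak\oplus\ufrak\to\bfrak$ be the projection along $\ufrak$. First I would record that $B_t(X)$ stays in the subsemigroup $B_{\ge 0}:=N_{\ge 0}A$ of $B$: writing the solution of \eqref{lbl:process_B_sde_stratonovich} as a Lie--Trotter limit of products of infinitesimal factors $\exp\!\big(\sum_{\alpha}\half\langle\alpha,\alpha\rangle f_\alpha\,\Delta t\big)\in N_{\ge 0}$ and $\exp(\Delta X)\in A$, and using that $N_{\ge 0}A$ is a closed subsemigroup of $B$ (conjugation by $A$ preserves $N_{\ge 0}$), this is classical (see \cite{bib:BBO}). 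Consequently, for $g\in U_{\ge 0}$ the semimartingale $Z_t:=g\,B_t(X)$ takes values in the semigroup $G_{\ge 0}$, which by Theorem \ref{thm:totally_positive_gauss_decomposition} lies inside the big cell $BB^+=NHU$. Hence the Gauss decomposition
\[
Z_t=M_t\,u_t,\qquad M_t:=[Z_t]_{-0}\in N_{\ge 0}A,\quad u_t:=[Z_t]_+\in U_{\ge 0},
\]
exists for every $t\ge 0$; since the Gauss decomposition is a smooth (indeed algebraic) map on the big cell, $M$ and $u$ are semimartingales. This already establishes the well-definedness assertion.

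Next I would differentiate. Left multiplication by the constant $g$ turns \eqref{lbl:process_B_sde_stratonovich} into $\circ\,dZ_t=Z_t\circ\big(F\,dt+dX_t\big)$ with $F:=\sum_{\alpha\in\Delta}\half\langle\alpha,\alpha\rangle f_\alpha\in\nfrak$. Applying the Leibniz rule, valid in the Stratonovich calculus, to $Z_t=M_t u_t$ and then bringing everything back to $\gfrak$ by right translation by $u_t^{-1}$ and left translation by $M_t^{-1}$ gives
\[
\mathrm{Ad}(u_t)\big(F\,dt+dX_t\big)\;=\;M_t^{-1}\circ dM_t\;+\;(\circ\,du_t)\,u_t^{-1}.
\]
On the right, $M_t^{-1}\circ dM_t$ is $\nfrak\oplus\afrak$-valued (the tangent algebra of $N_{\ge 0}A\subset B$) while $(\circ\,du_t)\,u_t^{-1}$ is $\ufrak$-valued; applying $\mathrm{pr}_{\bfrak}$ therefore yields $M_t^{-1}\circ dM_t=\mathrm{pr}_{\bfrak}\!\big[\mathrm{Ad}(u_t)(F\,dt+dX_t)\big]$.

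The last step is a weight computation. For $h\in\hfrak$ one has $\mathrm{pr}_{\bfrak}\mathrm{Ad}(u_t)h=h$, because every term beyond the zeroth in $\mathrm{Ad}(\exp\xi)h$, $\xi\in\ufrak$, has weight a sum of one or more positive roots, hence lies in $\ufrak$. For a \emph{simple} root $\alpha$, the only negative-root component of $\mathrm{Ad}(u_t)f_\alpha$ is $f_\alpha$ itself: a weight of the form $-\alpha+\delta$ with $\delta$ a nonzero element of the positive root lattice is never a negative root, for that would exhibit the simple root $\alpha$ as a sum of two nonzero elements of the positive root lattice. Thus $\mathrm{pr}_{\bfrak}\mathrm{Ad}(u_t)f_\alpha=f_\alpha+H_\alpha(u_t)$ with $H_\alpha(u_t)\in\hfrak$, and the previous identity becomes
\[
M_t^{-1}\circ dM_t \;=\; F\,dt \;+\; d\!\Big(X_t+\int_0^t V(u_s)\,ds\Big),\qquad V(u):=\sum_{\alpha\in\Delta}\half\langle\alpha,\alpha\rangle\,H_\alpha(u).
\]
Since the left-hand side is $\nfrak\oplus\afrak$-valued, $V(u_s)\in\afrak$, so $\pi_t:=X_t+\int_0^t V(u_s)\,ds$ is a continuous $\afrak$-valued path with $\pi_0=0$. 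As $M_0=[g]_{-0}=\mathrm{Id}$ (because $g\in U$), the process $M$ solves \eqref{lbl:process_B_sde_stratonovich} driven by $\pi$, hence $M_t=B_t(\pi)$ by uniqueness. Setting $T_gX:=\pi$ gives $B_t(T_gX)=[\,g\,B_t(X)\,]_{-0}$; continuity of $T_g$ on $C(\R_+,\afrak)$ follows since $\pi$ is built from $X$ by the continuous operations of solving \eqref{lbl:process_B_sde_stratonovich}, taking the Gauss decomposition on the open big cell, and integrating, and $T_g$ is the \emph{unique} such transform because $\pi$ is recovered from $B_\cdot(\pi)$ through its $A$-part $A_t(\pi)=e^{\pi_t}$.

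I expect the only genuinely delicate point to be the weight bookkeeping — in particular verifying that the $\nfrak$-component of $\mathrm{Ad}(u_t)F$ equals $F$ exactly, which is precisely where it matters that $F$ is supported on the simple negative root vectors. The rest is formal: Stratonovich calculus obeys the usual product and chain rules, and the case of a general deterministic or semimartingale driver $X$, used elsewhere in the paper, is handled identically or by an approximation argument. As an alternative organization one may factor $g=x_{i_1}(c_1)\cdots x_{i_k}(c_k)$ with $c_j>0$ and set $T_g:=T_{x_{i_1}(c_1)}\circ\cdots\circ T_{x_{i_k}(c_k)}$; this is consistent because $\big[g_1\,[g_2 h]_{-0}\big]_{-0}=[g_1 g_2 h]_{-0}$ by uniqueness of the Gauss decomposition, and each elementary transform is the Pitman-type map $T_{x_\alpha(c)}X_t=X_t+\alpha^\vee\log\!\big(1+c\,\half\langle\alpha,\alpha\rangle\int_0^t e^{-\alpha(X_s)}\,ds\big)$, computed inside the rank-one subgroup attached to $\alpha$.
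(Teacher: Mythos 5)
The paper does not actually prove this statement: it imports it verbatim from \cite{bib:BBO2} (Proposition~6.4) and \cite{bib:chh14a} (section~5), so there is no in-paper argument to compare against. Your proposal supplies a genuine proof, and it is correct; the logarithmic-derivative computation is the standard mechanism behind results of this type and is essentially the one used in the cited references. The chain of observations — $B_t(X)\in N_{\geq 0}A$, hence $gB_t(X)\in G_{\geq 0}\subset NHU$ by Theorem~\ref{thm:totally_positive_gauss_decomposition}, hence the Gauss factors $M_t, u_t$ exist and are smooth semimartingales; peel off $u_t$ on the right, project the driving one-form onto $\bfrak$, and use the simple-root weight bookkeeping $\mathrm{pr}_{\nfrak}\mathrm{Ad}(u)f_\alpha=f_\alpha$ to recover the same $F\,dt$ — is exactly the right argument, and the weight computation is carried out correctly (a weight $-\alpha+\delta$ with $\delta$ a nonzero sum of positive roots cannot be a negative root when $\alpha$ is simple).

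Two small points are worth tightening. First, the Lie algebra of $B$ is $\nfrak\oplus\hfrak$, not $\nfrak\oplus\afrak$; what you actually need is that $M_t$ and $u_t$ lie in the real split form (which follows since $g$ and $B_t(X)$ do and the Gauss decomposition preserves it), so that $M_t^{-1}\!\circ\! dM_t$ and $V(u_t)$ are $\afrak$-valued rather than merely $\hfrak$-valued. You invoke this almost circularly (``since the left-hand side is $\nfrak\oplus\afrak$-valued''); it is true, but the reason is the real-form stability, not the display itself. Second, the Stratonovich argument as written applies to semimartingale drivers, while the theorem is stated over $C(\R_+,\afrak)$; you do flag this, and the closing factorization $T_g=T_{x_{i_1}(c_1)}\circ\cdots\circ T_{x_{i_k}(c_k)}$ via the explicit rank-one formula
$T_{x_\alpha(c)}X_t = X_t + \alpha^\vee\log\bigl(1+c\,\tfrac{\langle\alpha,\alpha\rangle}{2}\int_0^t e^{-\alpha(X_s)}ds\bigr)$
is in fact the cleanest route, since it is purely deterministic and works for arbitrary continuous paths without any approximation. (Your normalization of the rank-one transform, with the factor $\tfrac{\langle\alpha,\alpha\rangle}{2}$, is the correct one for the flow \eqref{lbl:process_B_sde_stratonovich} used in this paper; the formula quoted in section~\ref{section:proof_conditional_thm} without that factor is in the normalization of \cite{bib:chh14a}, which the paper reconciles by conjugating with $e^\theta$.) Overall this is a sound and complete proof of a statement the paper leaves to the references.
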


From now on, fix a reduced word ${\bf i} \in R(w_0)$ of length $m = \ell(w_0)$ and call $(\beta_1, \dots, \beta_m)$ the associated positive roots enumeration. Choose $\bar{w}_0$ to be any representative in $G$ on the longest element $w_0$ in $W = \textrm{Norm}( H )/ H$. Also, we introduce the shift vector:
$$\theta = \sum_{ \alpha \in \Delta } \log\left( \frac{\langle \alpha, \alpha\rangle}{2} \right) \omega_\alpha^\vee$$
where $\omega_\alpha^\vee$ are the fundamental coweights. As in remark \ref{rmk:ADE}, for $ADE$ groups, $\theta = 0$ because all roots can be chosen and are chosen to be of the same squared norm $2$. We can state the following theorem whose proof is postponed to section \ref{section:proof_conditional_thm}:

\begin{thm}[Conditional representation theorem]
\label{thm:conditional_representation}
Let $g \in U_{>0}^{w_0}$, $\mu \in C$ and $W$ a standard Brownian motion on $\afrak$.

Then $\Lambda^{x_0} := x_0 + T_{e^{\theta} g e^{-\theta}} \left( W^{(w_0 \mu)} \right)$ is distributed as a Brownian motion $X^{x_0, (\mu) }$
\begin{itemize}
 \item with drift $\mu$
 \item with initial position $x_0$
 \item conditioned on $N_\infty(X^{0, (\mu)}) = \Theta(g)$ where 
$$\Theta: U_{>0}^{w_0} \longrightarrow N_{>0}^{w_0}$$
 is the bijective function $\Theta(g) := [ g \bar{w}_0 ]_-$
\end{itemize}
Moreover, if we pick $g = x_{i_1}\left( t_1 \right) \dots x_{i_m}\left( t_m \right), m = \ell(w_0)$ being random with independent Lusztig parameters such that $t_j \stackrel{ \mathcal{L} }{=} \gamma_{ \langle\beta_{j}^\vee, \mu\rangle } $, then $\Lambda^{x_0}$ is a standard Brownian motion with drift $\mu$ starting at $x_0$.
\end{thm}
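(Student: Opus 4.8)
The plan is to deduce the final statement from the first part of Theorem \ref{thm:conditional_representation} by a disintegration argument, which reduces it to an explicit identification of the law of $N_\infty\big(X^{0,(\mu)}\big)$; that identification I would then establish by induction on $\ell(w_0)$. Concretely, take $g$ independent of the driving Brownian motion $W$ and let $\nu$ denote its law, i.e.\ the pushforward under the Lusztig chart $x_{\bf i}$ of the product of independent gamma laws $\gamma_{\langle\beta_j^\vee,\mu\rangle}$, $j=1,\dots,m$. Conditionally on $g$, the first part of the theorem identifies $\mathcal{L}\big(\Lambda^{x_0}\mid g\big)$ with $\mathcal{L}\big(X^{x_0,(\mu)}\mid N_\infty(X^{0,(\mu)})=\Theta(g)\big)$. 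Since $X^{x_0,(\mu)}=x_0+X^{(\mu)}$ and $N_\infty(X^{0,(\mu)})=N_\infty(X^{(\mu)})$ are both functionals of the single Brownian motion $X^{(\mu)}$, the tower property gives the disintegration
\[
\mathcal{L}\big(X^{x_0,(\mu)}\big)=\int\mathcal{L}\big(X^{x_0,(\mu)}\mid N_\infty(X^{0,(\mu)})=n\big)\;\mathcal{L}\big(N_\infty(X^{0,(\mu)})\big)(dn)\ .
\]
Averaging the conditional identity over $g\sim\nu$ and comparing with this, $\Lambda^{x_0}\eqlaw X^{x_0,(\mu)}$ holds precisely when $\Theta_*\nu=\mathcal{L}\big(N_\infty(X^{0,(\mu)})\big)$; necessity here is clear because, by the first part, $N_\infty(\Lambda^{x_0}-x_0)=\Theta(g)$ almost surely. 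So the whole matter reduces to the identity in law
\[
\Theta^{-1}\big(N_\infty(X^{0,(\mu)})\big)\ \eqlaw\ x_{\bf i}\big(\gamma_{\langle\beta_1^\vee,\mu\rangle},\dots,\gamma_{\langle\beta_m^\vee,\mu\rangle}\big)\ ,\qquad\text{the }\gamma\text{'s independent,}
\]
which is what Theorem \ref{thm:N_infty_law} will record.

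I would prove this identity by induction on $m=\ell(w_0)$. For $m=1$ the group is $SL_2$, $w_0=s_1$ and $\beta_1=\alpha_1$; from the $SL_2$ example one reads $N_\infty(X^{0,(\mu)})=y_1\!\big(2\int_0^\infty e^{-2X^{(\mu)}_s}\,ds\big)$, a one-line matrix computation gives $\Theta\big(x_1(t)\big)=y_1(1/t)$, and the identity collapses to Dufresne's identity $2\int_0^\infty e^{-2X^{(\mu)}_s}\,ds\eqlaw\gamma_\mu^{-1}$ together with $\langle\alpha_1^\vee,\mu\rangle=\mu$ --- both recalled in section \ref{section:review_matsumoto_yor}. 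For the inductive step I would use the explicit form \eqref{eqn:process_N_explicit} of $N_t$ and the factorization of the flow $B_\cdot(\cdot)$ and of the path transforms over a reduced word coming from \cite{bib:chh14a} and \cite{bib:thesis}: writing $T_{e^\theta g e^{-\theta}}=T_{e^\theta x_{i_1}(t_1)e^{-\theta}}\circ\cdots\circ T_{e^\theta x_{i_m}(t_m)e^{-\theta}}$, each elementary factor acts as a Matsumoto--Yor transform in a single simple-root direction, and peeling off one factor should exhibit $\Theta^{-1}\big(N_\infty(X^{0,(\mu)})\big)$ as the image, under one such transform, of the analogous object attached to a reduced word of length $m-1$ for a Brownian motion whose drift is $\mu$ shifted along the root string dictated by the convex order of Lemma \ref{lemma:positive_roots_enumeration}; the induction hypothesis then produces $\gamma_{\langle\beta_j^\vee,\mu\rangle}$ on all coordinates but one, the remaining one being supplied once more by Dufresne's identity and the elementary beta--gamma algebra identity. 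The conjugation by $e^\theta$ is the same bookkeeping as in the first part: it rescales the parameter of each $x_i$ by $\tfrac{\langle\alpha_i,\alpha_i\rangle}{2}$, the normalization already present in \eqref{lbl:process_B_sde_stratonovich}, so the whole argument may be run in the $ADE$ normalization ($\theta=0$, Remark \ref{rmk:ADE}) and the general case recovered by this rescaling, which leaves gamma shape parameters untouched.

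The hard part will be the inductive step. It requires importing exactly the right flow factorization from \cite{bib:chh14a}, \cite{bib:thesis} so that removing one letter of ${\bf i}$ genuinely corresponds, on the probabilistic side, to a one-dimensional Matsumoto--Yor transform of a Brownian motion with an explicitly shifted drift; checking that the first $m-1$ Lusztig coordinates of the $m$-dimensional object are literally those of the lower one while the last is produced by the peel-off; and threading the torus conjugations through the recursion so that all parameters end up on $\langle\beta_j^\vee,\mu\rangle$ with no residual root-dependent factor. It is worth recording throughout that $\mathcal{L}\big(N_\infty(X^{0,(\mu)})\big)$, as a measure on $N^{w_0}_{>0}$, does not depend on ${\bf i}$, so in principle it suffices to establish the identity for one conveniently chosen reduced word; the ${\bf i}$-dependence of its Lusztig-coordinate description, transported through the transition maps $R_{{\bf i},{\bf i'}}$, is precisely what later yields the identities in law of Theorem \ref{thm:beta_gamma_identities}.
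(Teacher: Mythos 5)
Your proposal never actually proves the first, conditional part of the theorem: you take it as given and use it as a black box to derive the ``Moreover'' clause via disintegration. That conditional statement is the substantive content here, and in the paper's proof it is delivered \emph{simultaneously} with the unconditional one, as follows. The composition property of the path transform (\cite{bib:chh14a}, Prop.~5.19) factors $T_{e^\theta g e^{-\theta}}$ into $m$ elementary transforms, each of which is a Matsumoto--Yor transform in one simple-root direction (Theorem~\ref{thm:sl2_conditional_representation}); iterating it with gamma-distributed Lusztig parameters shows $\Lambda^{x_0}$ is a Brownian motion with drift $\mu$. The crucial further ingredient --- which your proposal never invokes --- is the \emph{deterministic} inversion lemma for Lusztig parameters (\cite{bib:chh14a}, Thm.~8.16, extended to infinite horizon), giving $N_\infty\big(T_{e^\theta g e^{-\theta}}(W^{(w_0\mu)})\big)=\Theta(g)$ for \emph{any} admissible driving path. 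It is only this deterministic identity that equates the joint law of $(\Lambda^{x_0},\Theta(g))$ with that of $\big(X^{x_0,(\mu)},N_\infty(X^{0,(\mu)})\big)$, which is exactly what the conditional statement asserts. Without it, there is no route from ``$\Lambda^{x_0}$ is a Brownian motion'' to ``$\Lambda^{x_0}$ is a Brownian motion conditioned on $N_\infty=\Theta(g)$''; your own observation that $N_\infty(\Lambda^{x_0}-x_0)=\Theta(g)$ a.s.\ is derived \emph{from} the first part, so you cannot also use it to prove the first part.

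A secondary problem is the proposed ``induction on $m=\ell(w_0)$'' for the exit law. The length of the longest element is fixed by the root system; peeling off one letter of a reduced word of $w_0$ gives a reduced word for a shorter element $w$ of the \emph{same} Weyl group, not $w_0$ of a smaller group. The ``analogous object'' you would need for the inductive hypothesis is $N_\infty$ of a Brownian motion whose drift is $s_{i_{j+1}}\cdots s_{i_m}(w_0\mu)$, which generically lies outside the Weyl chamber, so that intermediate $N_\infty$ need not converge and, when it does, lands in a lower Bruhat cell where $\Theta$ is not even defined. The paper sidesteps this entirely: it never looks at $N_\infty$ along intermediate steps, but only tracks the sequence of drifts through the $m$ Matsumoto--Yor applications and applies the deterministic inversion lemma once, at the end. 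Your reversal of the logical order (deriving Theorem~\ref{thm:N_infty_law} first and using it to establish the second part of Theorem~\ref{thm:conditional_representation}) is defensible in principle, but the exit-law induction as sketched does not go through, and in any case leaves the conditional statement unproved.
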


\begin{rmk}
The conditioning approach owes a lot to Baudoin and O'Connell \cite{bib:BOC09} in spirit, but ends up being quite different. Indeed, that paper considered a conditioning of Brownian motion $X^{(\mu)}$ with respect to simple integrals $\int_0^\infty e^{-\alpha(X^{(\mu)})}$, for $\alpha$ a simple root. Nevertheless, the geometric path model developed in \cite{bib:chh14a} and \cite{bib:thesis} makes it more natural to condition with respect to $N_\infty(X^{(\mu)})$. The random variable $N_\infty(X^{(\mu)})$ not only contains the simple integrals, but also iterated ones.  
\end{rmk}


\begin{example}[$\Theta$ in $A_1$-type]
\label{example:twist_A1}
For $G = SL_2$, one can choose:
 $$ \bar{w}_0 = \begin{pmatrix} 0 & -1\\ 1 & 0 \end{pmatrix} $$
Then:
 $$ \Theta\left( \begin{pmatrix} 1 & t\\ 0 & 1 \end{pmatrix} \right) = \left[ \begin{pmatrix} 1 & t\\ 0 & 1 \end{pmatrix} \begin{pmatrix} 0 & -1\\ 1 & 0 \end{pmatrix} \right]_- = \begin{pmatrix} 1 & 0\\ \frac{1}{t} & 0 \end{pmatrix}$$
\end{example}

\begin{example}[$\Theta$ in $A_2$-type]
\label{example:twist_A2}
For $G = SL_3$, one can choose:
 $$ \bar{w}_0 = \begin{pmatrix} 0 & 0       & 1      \\ 0 & -1 & 0  \\ 1 & 0 & 0 \end{pmatrix} $$
If:
$$ g = \begin{pmatrix} 1 & t_1 & 0\\ 0 & 1   & 0\\ 0 & 0 & 1 \end{pmatrix}
       \begin{pmatrix} 1 & 0   & 0\\ 0 & 1 & t_2\\ 0 & 0 & 1 \end{pmatrix}
       \begin{pmatrix} 1 & t_3 & 0\\ 0 & 1   & 0\\ 0 & 0 & 1 \end{pmatrix} $$
Then:
 $$ \Theta\left( g \right) =
\left[ \begin{pmatrix} 1 & t_1+t_3 & t_1 t_2\\ 0 &  1 & t_2\\ 0 & 0 & 1 \end{pmatrix} 
       \begin{pmatrix} 0 & 0       & 1      \\ 0 & -1 & 0  \\ 1 & 0 & 0 \end{pmatrix} \right]_- = 
\begin{pmatrix}
1                 & 0                             & 0 \\
\frac{1}{t_1}     & 1                             & 0\\
\frac{1}{t_1 t_2} & \frac{1+\frac{t_1}{t_3}}{t_2} & 1 
\end{pmatrix} $$

\end{example}

\subsection{Exit law}

Thanks to the conditional representation theorem, we obtain the exit law with little effort.

\begin{thm}[Law of $N_\infty(X^{(\mu)})$]
\label{thm:N_infty_law}
If $X^{(\mu)}$ is a Brownian motion with drift $\mu$ the Weyl chamber, then $N_t(X^{(\mu)})$ converges almost surely inside the open cell $N_{>0}^{w_0}$ and 
$N_\infty(X^{(\mu)}) = \Theta\left( x_{i_1}( t_1 ) \dots x_{i_m}( t_m ) \right)$ where the Lusztig parameters $t_j$ are independent random variables with:
$$ t_j \stackrel{\mathcal{L}}{=} \gamma_{ \langle\beta_{j}^{\vee}, \mu\rangle } $$
\end{thm}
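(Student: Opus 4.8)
The plan is to derive Theorem~\ref{thm:N_infty_law} as a direct corollary of the conditional representation theorem~\ref{thm:conditional_representation}. The key observation is that Theorem~\ref{thm:conditional_representation} provides a \emph{disintegration} of the law of a Brownian motion with drift $\mu$ over the fibers of the map $X \mapsto N_\infty(X^{(\mu)})$: for each fixed $g \in U_{>0}^{w_0}$, the transformed path $T_{e^\theta g e^{-\theta}}(W^{(w_0\mu)})$ realizes $X^{(\mu)}$ conditioned on $\{N_\infty = \Theta(g)\}$, and the final sentence of that theorem tells us that if $g$ is randomized by taking its Lusztig parameters to be independent with $t_j \eqlaw \gamma_{\langle \beta_j^\vee, \mu\rangle}$, the resulting (unconditioned) process is just a standard Brownian motion with drift $\mu$. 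Uniqueness of disintegration then forces the law of $N_\infty(X^{(\mu)})$ to coincide with the push-forward, under $\Theta \circ x_{\bf i}$, of the product of gamma laws $\bigotimes_{j=1}^m \gamma_{\langle \beta_j^\vee, \mu\rangle}$. That is exactly the assertion.

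First I would establish almost sure convergence of $N_t(X^{(\mu)})$ and that the limit lands in the open cell $N_{>0}^{w_0}$. Each matrix entry of $N_t(X^{(\mu)})$ is, by \eqref{eqn:process_N_explicit}, a finite sum of iterated integrals of the form $\int_{t\ge t_k\ge\cdots\ge t_1\ge 0} e^{-\alpha_{i_1}(X^{(\mu)}_{t_1})-\cdots-\alpha_{i_k}(X^{(\mu)}_{t_k})}\,dt_1\cdots dt_k$; since $\mu$ lies in the open Weyl chamber $C$, each simple root $\alpha$ satisfies $\langle\alpha,\mu\rangle>0$, so $\alpha(X^{(\mu)}_s)=\langle\alpha,X_s\rangle+\langle\alpha,\mu\rangle s \to +\infty$ by the law of large numbers for Brownian motion, and the exponentials decay fast enough for the iterated integrals to converge absolutely. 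Monotone/dominated convergence upgrades this to a.s.\ convergence of $N_t$ to some $N_\infty \in N$, and positivity of all the integrands combined with Theorem~\ref{thm:totally_positive_parametrizations} (applied to $w_0$) places $N_\infty$ in $N_{\ge 0}$; nondegeneracy of the relevant Lusztig parameters — guaranteed because the gamma parameters $\langle\beta_j^\vee,\mu\rangle$ are strictly positive, so the parameters are a.s.\ strictly positive — puts it in the open cell $N_{>0}^{w_0}$. Here I would lean on the results of \cite{bib:chh14a} and \cite{bib:thesis} for the identification $N_\infty \in N_{>0}^{w_0}$ rather than reprove it.

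Next I would make the disintegration argument precise. Let $\nu$ denote the law of $N_\infty(X^{(\mu)})$ on $N_{>0}^{w_0}$, and let $\rho$ be the product gamma law on $\R_{>0}^m$ pushed forward to $U_{>0}^{w_0}$ via $x_{\bf i}$. For any bounded measurable test functional $F$ on path space and bounded measurable $\phi$ on $N_{>0}^{w_0}$, Theorem~\ref{thm:conditional_representation} gives
\begin{align}
\E\!\left[ F\!\left(X^{(\mu)}\right)\, \phi\!\left(N_\infty(X^{(\mu)})\right)\right]
&= \int_{U_{>0}^{w_0}} \E\!\left[ F\!\left(x_0 + T_{e^\theta g e^{-\theta}}(W^{(w_0\mu)})\right)\right] \phi(\Theta(g))\; \nu'(dg),
\end{align}
where $\nu'$ is the law of the conditioning variable $g$ — and the last sentence of the theorem, obtained by taking $g$ distributed as $\rho$, says precisely that when $\nu' = \rho$ the left-hand side integrated against all $F$ reproduces Wiener measure with drift $\mu$, i.e.\ $\nu' = \rho$ is the correct choice. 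By essential uniqueness of regular conditional distributions (the map $X\mapsto N_\infty(X^{(\mu)})$ is measurable and $N_{>0}^{w_0}$ is a nice Borel space), this identifies $\nu$ as the law of $\Theta(g)$ with $g\sim\rho$, which unwinds to: $N_\infty(X^{(\mu)}) = \Theta(x_{i_1}(t_1)\cdots x_{i_m}(t_m))$ with the $t_j$ independent, $t_j\eqlaw\gamma_{\langle\beta_j^\vee,\mu\rangle}$.

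The main obstacle I anticipate is purely bookkeeping rather than conceptual: making sure the ``randomized $g$'' clause of Theorem~\ref{thm:conditional_representation} is applied in the right direction. The theorem is phrased as ``if you randomize $g$ this way then $\Lambda^{x_0}$ is an unconditioned Brownian motion,'' and I need the converse reading — that this exhibits $\rho$ as \emph{the} mixing measure in the disintegration of Brownian motion over the fibers of $N_\infty$. This requires knowing that the conditional laws $\{X^{(\mu)} \mid N_\infty = \Theta(g)\}$ genuinely form a regular conditional distribution (so that the mixing measure is unique), which again I would attribute to the construction in \cite{bib:chh14a}, \cite{bib:thesis} underlying Theorem~\ref{thm:conditional_representation}. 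Once that is granted, no further computation is needed; in particular the shift vector $\theta$ and the representative $\bar w_0$ play no role in the final statement since they are integrated out.
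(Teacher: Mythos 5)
Your proposal is correct and matches the paper's own strategy: the convergence statement and the membership $N_\infty(X^{(\mu)}) \in N^{w_0}_{>0}$ are delegated to prior total-positivity results, and the identification of the law is then read off Theorem~\ref{thm:conditional_representation}. Two remarks on the details. First, your disintegration-plus-uniqueness argument is a valid way to unpack the paper's terse ``comes directly,'' but it is heavier than needed: the proof of Theorem~\ref{thm:conditional_representation} establishes the deterministic identity $N_\infty\bigl(T_{e^\theta g e^{-\theta}}(W^{(w_0\mu)})\bigr) = \Theta(g)$, so combining this with the ``moreover'' clause (randomizing $g$ with gamma Lusztig parameters turns $\Lambda^{x_0}$ into an unconditioned Brownian motion with drift $\mu$) yields the law of $N_\infty(X^{(\mu)})$ immediately, with no appeal to uniqueness of regular conditional distributions. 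Second, the heuristic you offer for $N_\infty(X^{(\mu)}) \in N^{w_0}_{>0}$ via nondegeneracy of the Lusztig parameters is circular, since those parameters are only defined once $N_\infty$ is already known to lie in the open cell; because you ultimately defer to the cited references this is not a gap, but the honest argument --- as the paper indicates --- goes through total-positivity criteria phrased in terms of generalized minors.
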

\begin{proof}
The condition $\mu \in C$ entails the convergence of iterated integrals in the explicit expression of $N_t(X)$, equation \eqref{eqn:process_N_explicit}. The fact that $N_\infty(X^{(\mu)}) \in N^{w_0}_{>0}$ follows from a total posivity criterion (Theorem 1.5 in \cite{bib:BZ97} or Theorem 1.11 in \cite{bib:FZ99}) formulated in terms of generalized minors. Basically, one has only to adapt the proofs of theorem 5.2 in \cite{bib:chh14a}, or lemma 4.3 in \cite{bib:BBO} to the case of infinite time horizon.

The law of $N_\infty(X^{(\mu)})$ comes directly from theorem \ref{thm:conditional_representation}.

It is worth noting that the probability measure has a smooth density and charges the entire space $N^{w_0}_{>0}$, which is an open dense cell of $N_{\geq 0}$. Other cells are of smaller dimension and therefore of zero measure. Hence, it can be viewed as an $N_{\geq 0}$-valued random variable.
\end{proof}

Let us examine the example of $SL_2$, the group with smallest rank.
\begin{example}[$A_1$-type]
 For $G = SL_2$, $\afrak = \R$, $\alpha=2$, $\alpha^\vee=1$ and:
$$N_\infty(X^{(\mu)}) = \begin{pmatrix} 1 & 0\\ \int_0^\infty  2 e^{-2 X^{(\mu)}_s}ds & 1 \end{pmatrix} $$
We know that $\Theta\left( \begin{pmatrix} 1 & y \\ 0 & 1 \end{pmatrix} \right) = \begin{pmatrix} 1 & 0 \\ \frac{1}{y} & 1 \end{pmatrix}$, 
and as such theorem \ref{thm:N_infty_law} tells us that:
$$ \begin{pmatrix} 1 & 0\\ \int_0^\infty  2 e^{-2 X^{(\mu)}_s}ds & 1 \end{pmatrix} \eqlaw \begin{pmatrix} 1 & 0 \\ \frac{1}{\gamma_\mu} & 1 \end{pmatrix}$$
This is exactly Dufresne's identity in law \cite{bib:Dufresne90}.
\end{example}

Here, the law of $N_\infty(X^{(\mu)})$ can be seen as a meaningful generalization of Dufresne's identity which uses an inverse gamma. In the group setting, we see that the map $\Theta$ plays the role of the inverse map. Higher dimensional examples give new identities about exponential functionals of Brownian motion.

\begin{example}[$A_2$-type]
 For $G = SL_3$, $\afrak = \left\{ x \in \R^3 \ | \ x_1 + x_2 + x_3 = 0 \right\}$. Consider a Brownian motion $X^{(\mu)}$ on $\afrak$ with drift $\mu$ in the Weyl chamber. The simple roots are $\alpha_1 = \begin{pmatrix} 1 \\ -1 \\ 0 \end{pmatrix} , \alpha_2 = \begin{pmatrix} 0 \\ 1 \\ -1 \end{pmatrix} $. We obtain that:
$$ N_\infty(X^{(\mu)}) = \begin{pmatrix}
 1                                                                                  & 0                                           & 0 \\
 \int_0^\infty e^{-\alpha_1(X_s^{(\mu)})} ds                                        & 1                                           & 0\\
 \int_0^\infty e^{-\alpha_1(X_s^{(\mu)}) }ds \int_0^s e^{-\alpha_2(X_u^{(\mu)}) }du & \int_0^\infty e^{-\alpha_2(X_s^{(\mu)}) }ds & 1 
 \end{pmatrix} $$
We choose the reduced word ${\bf i} = (1,2,1) \in R(w_0)$. If:
$$\left(t_1, t_2, t_3 \right) \stackrel{\Lc}{=} \left( 
\gamma_{\langle \alpha^\vee_1                , \mu \rangle},
\gamma_{\langle \alpha^\vee_1 + \alpha^\vee_2, \mu \rangle},
\gamma_{\langle \alpha^\vee_2                , \mu \rangle}
\right)$$
are independent gamma random variables with corresponding parameters, then example \ref{example:twist_A2} and theorem \ref{thm:N_infty_law} tell us that:
\begin{align}
\label{eq:eqlaw_A2}
N_\infty(X^{(\mu)}) & \eqlaw 
\begin{pmatrix}
1                 & 0                             & 0 \\
\frac{1}{t_1}     & 1                             & 0\\
\frac{1}{t_1 t_2} & \frac{1+\frac{t_1}{t_3}}{t_2} & 1 
 \end{pmatrix} 
\end{align}
This result in itself seems new. It can be restated as follows in terms of the two positive parameters $a = \langle \alpha^\vee_2, \mu \rangle>$, $\widetilde{a} = \langle \alpha^\vee_2, \mu \rangle>0$ and the two correlated real Brownian motions:
$$ B_t := \frac{1}{\sqrt{2}}\left(X^1_t - X^2_t\right); \quad \widetilde{B}_t := \frac{1}{\sqrt{2}}\left(X^2_t - X^3_t\right)$$
The equality \eqref{eq:eqlaw_A2} is equivalent to the fact that the triple
$$ 
\left(
    \int_0^\infty e^{-\sqrt{2} B_s - a s} ds, \ 
    \int_0^\infty e^{-\sqrt{2} \widetilde{B}_s - \widetilde{a}s }ds, \ 
    \int_0^\infty e^{-\sqrt{2} B_s - a s } ds \int_0^s e^{-\sqrt{2} \widetilde{B}_u - \widetilde{a}u } du \right)
    $$
has the same distribution as $\left( \frac{1}{\gamma_{a}} , \ \frac{1+\frac{\gamma_a}{\gamma_{\widetilde{a}}}}{\gamma_{a+\widetilde{a}}}, \ \frac{1}{\gamma_{a} \gamma_{\widetilde{a}}} \right)$. 

Notice that the two first marginals are consistent with Dufresne's identity. In order to see that, one has to perform a Brownian rescaling of time by a factor $2$ and invoke the classical identity $ \frac{1+\frac{\gamma_a}{\gamma_{\widetilde{a}}}}{\gamma_{a+\widetilde{a}}} \eqlaw \frac{1}{\gamma_{\widetilde{a}}}$.
\end{example}

\subsection{Beta-gamma algebra}

We have then a natural notion of gamma law and inverse gamma law. Here $\gamma_.$ denote independent gamma random variables.

\begin{definition}[gamma law on $U$ and inverse gamma on $N$]
\label{def:group_gamma_law}
For $\mu \in C$ and $(\beta_1, \dots, \beta_m)$ being the positive roots enumeration associated to a reduced expression of $w_0 = s_{i_1} \dots s_{i_m}$, define $\Gamma_\mu$ to be the law of the positive (in the sense of total positivity) $U^{w_0}_{>0}$-valued random variable
$$ \Gamma_\mu \stackrel{\mathcal{L}}{=} x_{i_1}\left( \gamma_{ \langle\beta_{1}^{\vee}, \mu\rangle } \right) \dots x_{i_m}\left( \gamma_{ \langle\beta_{m}^{\vee}, \mu\rangle } \right) $$
Define the inverse gamma law on $N^{w_0}_{>0}$ as:
$$ D_\mu \stackrel{\mathcal{L}}{=} \Theta\left( \Gamma_\mu \right) \ .$$
\end{definition}

Those laws are well defined, in the sense that the above expressions do not depend on the choice of a reduced expression:
\begin{proposition}
\label{proposition:gamma_equality_in_law} 
If ${\bf i}$ and ${\bf i'}$ are reduced words of $w_0$, then for every $\mu \in C$, the following equality in law holds between independent gamma random variables:
$$ \left( \gamma_{ \langle\beta_{{\bf i'}, 1}^{\vee}, \mu\rangle } , \dots, \gamma_{ \langle\beta_{{\bf i'},m}^{\vee}, \mu\rangle } \right) \eqlaw R_{\bf i, i'}
   \left( \gamma_{ \langle\beta_{{\bf i }, 1}^{\vee}, \mu\rangle } , \dots, \gamma_{ \langle\beta_{{\bf i },m}^{\vee}, \mu\rangle } \right) $$
\end{proposition}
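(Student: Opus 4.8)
The key observation is that the statement is essentially a restatement of the fact that the law $\Gamma_\mu$ in Definition \ref{def:group_gamma_law} is well-defined, so I would deduce Proposition \ref{proposition:gamma_equality_in_law} from Theorem \ref{thm:N_infty_law} (equivalently Theorem \ref{thm:conditional_representation}) rather than by a direct beta-gamma computation. The plan is as follows. Fix $\mu \in C$ and two reduced words ${\bf i}, {\bf i'} \in R(w_0)$. Apply Theorem \ref{thm:N_infty_law} twice — once with the enumeration of positive roots coming from ${\bf i}$, once with the one coming from ${\bf i'}$ — to the \emph{same} random variable $N_\infty(X^{(\mu)})$. The first application gives
$$ N_\infty(X^{(\mu)}) = \Theta\left( x_{i_1}(t_1) \dots x_{i_m}(t_m) \right), \qquad t_j \eqlaw \gamma_{\langle \beta_{{\bf i},j}^\vee, \mu \rangle} \text{ independent,} $$
and the second gives the analogous statement with ${\bf i'}$, $t'_j \eqlaw \gamma_{\langle \beta_{{\bf i'},j}^\vee, \mu \rangle}$. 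Since $\Theta: U^{w_0}_{>0} \to N^{w_0}_{>0}$ is a bijection (Theorem \ref{thm:conditional_representation}), the $U^{w_0}_{>0}$-valued random variables $x_{\bf i}(t_1, \dots, t_m)$ and $x_{\bf i'}(t'_1, \dots, t'_m)$ have the same law, namely $\Theta^{-1}$ pushed forward from the law of $N_\infty(X^{(\mu)})$.

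Now translate this equality of laws on $U^{w_0}_{>0}$ back to coordinates. By Theorem \ref{thm:totally_positive_parametrizations}, $x_{\bf i}$ and $x_{\bf i'}$ are both bijective parametrizations $\mathbb{R}_{>0}^m \to U^{w_0}_{>0}$, and by definition $R_{\bf i, i'} = x_{\bf i'}^{-1} \circ x_{\bf i}$ is the (continuous, in fact subtraction-free rational) change-of-chart map. From $x_{\bf i}(t_1, \dots, t_m) \eqlaw x_{\bf i'}(t'_1, \dots, t'_m)$ and the fact that $x_{\bf i'}$ is a bijection, applying $x_{\bf i'}^{-1}$ to both sides yields
$$ R_{\bf i, i'}\left( t_1, \dots, t_m \right) = x_{\bf i'}^{-1}\left( x_{\bf i}(t_1, \dots, t_m) \right) \eqlaw x_{\bf i'}^{-1}\left( x_{\bf i'}(t'_1, \dots, t'_m) \right) = \left( t'_1, \dots, t'_m \right), $$
which is exactly the claimed identity, once we recall that $(t_j)$ and $(t'_j)$ are independent gamma families with the stated parameters. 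Strictly speaking one should check that applying the measurable bijection $x_{\bf i'}^{-1}$ to both sides of an equality in law is legitimate — this is immediate since equality in law is preserved under any common measurable map.

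The only real subtlety is that Theorem \ref{thm:N_infty_law} must be invoked consistently: the random variable $N_\infty(X^{(\mu)})$ is intrinsic (it does not refer to any reduced word), so the two expressions $\Theta(x_{\bf i}(t_1,\dots,t_m))$ and $\Theta(x_{\bf i'}(t'_1,\dots,t'_m))$ are genuinely two descriptions of one and the same distribution, and the identity in law falls out. I do not expect a serious obstacle; the main point to be careful about is logical, namely that Proposition \ref{proposition:gamma_equality_in_law} should not be used circularly to justify Definition \ref{def:group_gamma_law} — rather, Theorem \ref{thm:N_infty_law} is the substantive input and both the well-definedness in Definition \ref{def:group_gamma_law} and Proposition \ref{proposition:gamma_equality_in_law} are corollaries of it. An alternative, more pedestrian route would be to verify the identity directly for the braid moves in rank-$2$ systems (where $R_{\bf i, i'}$ is given by explicit formulas, e.g. the $A_2$ move recovering the classical beta-gamma algebra identity) and then propagate along a path in the reduced-word graph using Tits' theorem that any two reduced words are connected by braid moves; but this reduces to the same rank-$2$ computations and is less clean, so I would relegate it to a remark.
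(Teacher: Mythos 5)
Your proof is correct and follows essentially the same route as the paper: the paper's one-line argument is precisely that Theorem \ref{thm:N_infty_law} expresses the exit law of $N_\infty(X^{(\mu)})$ — an intrinsic object — via either reduced word, and you have simply spelled out the bijection/pushforward details that the paper leaves implicit. Your caution about avoiding circularity with Definition \ref{def:group_gamma_law} is also the right instinct and consistent with the paper's logical ordering.
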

\begin{proof}
Theorem \ref{thm:N_infty_law} relates the above laws to the exit law of our hypoelliptic Brownian motion, which is intrinsically defined.
\end{proof}

The formula defining $\Gamma_\mu$ has more to it than meets the eye. Indeed, in order to have the law being the same for all reduced expressions of $w_0$, there must be hidden non-trivial equalities in law. These can be qualified as identities from the beta-gamma algebra, as some authors call them (see e.g \cite{bib:CPY98} and references therein). More is said in the next section. It is remarkable to think of them as a probabilistic manifestation of a group structure, more precisely of braid relationships.

If $s, s' \in W$ are reflections associated to simple roots and $d$ is the order of $ss'$, a braid relationship in $W$ is the equality between $d$ terms:
$$ s s' s \dots = s' s s' \dots $$
A braid move or a $d$-move occurs when substituting $ s s' s \dots$ for $s' s s' \dots $ within a reduced word. Considering two reduced expressions of the same Weyl group element, a well known result due to Hideya Matsumoto \cite{bib:Matsumoto64} and Tits \cite{bib:Tits69} states that one is obtained from the other by successive braid moves (see also \cite{bib:Humphreys90}).
$$ s_i s_j s_i \dots = s_j s_i s_j \dots $$
Using this fact, saying that $\Gamma_\mu$ is defined unambiguously is equivalent to saying that proposition \ref{proposition:gamma_equality_in_law} holds for reduced words which differ by a braid move. Hence for any $\mu \in \mathfrak{a}$, such that $\alpha_i(\mu)>0$ and $\alpha_j(\mu)>0$, one has:
\begin{align*}
       & x_i\left( \gamma_{\langle\alpha_i^\vee,\mu\rangle} \right) x_j\left( \gamma_{\langle s_i \alpha_j^\vee,\mu\rangle} \right) x_i\left( \gamma_{\langle s_i s_j \alpha_i^\vee,\mu\rangle} \right) \dots \\
\eqlaw & x_j\left( \gamma_{\langle\alpha_j^\vee,\mu\rangle} \right) x_i\left( \gamma_{\langle s_j \alpha_i^\vee,\mu\rangle} \right) x_j\left( \gamma_{\langle s_j s_i \alpha_j^\vee,\mu\rangle} \right) \dots 
\end{align*}
It turns out that the rank $2$ cases ($A_2$, $B_2$, $C_2$ and $G_2$) contain all the possible hidden identities. This is the classical rank $2$ reduction. Fix one of these root systems, and consider two reduced words ${\bf i}$, ${\bf i'}$  of the longest element $w_0$ in a rank $2$ system. The maps $R_{\bf i, i'}$ can be computed for every group explicitly and have been tabulated by Berenstein and Zelevinsky as theorem 5.3 in \cite{bib:BZ97}. Now write thanks to fundamental coweights $\mu = a_1 \omega_i^\vee + a_2 \omega_j^\vee$ and $(p_1, p_2, \dots) = R_{\bf i, i'}\left( t_1, t_2, \dots \right)$. By using the explicit formulas for the change of Lusztig parameters and the root enumerations given in tables \ref{tab:roots_A2}, \ref{tab:roots_B2}, \ref{tab:roots_C2} and \ref{tab:roots_G2}, we obtain the following theorem:

\begin{thm}[Beta-gamma algebra identities]
\label{thm:beta_gamma_identities}
Associated to every rank $2$ root system, the following identities in law hold:
\begin{itemize}

 \item Type $A_2$ (table \ref{tab:roots_A2}): $w_0 = s_1 s_2 s_1 = s_2 s_1 s_2$. ${\bf i} = (1,2,1)$ and ${\bf i'} = (2,1,2)$. Let $ p_1 = \frac{t_2 t_3}{t_1 + t_3}, p_ 2 = t_1 + t_3 , p_ 3 = \frac{t_1 t_2}{t_1 + t_3}$. Then:
$$(t_1, t_2, t_3) \stackrel{\mathcal{L}}{=}  \left( \gamma_{a_1}, \gamma_{a_1 + a_2}, \gamma_{a_2} \right)$$
\begin{center} if and only if \end{center}
$$(p_1, p_2, p_3) \stackrel{\mathcal{L}}{=}  \left( \gamma_{a_2}, \gamma_{a_1 + a_2}, \gamma_{a_1} \right) $$

 \item Type $B_2$ (table \ref{tab:roots_B2}): $w_0 = s_1 s_2 s_1 s_2 = s_2 s_1 s_2 s_1$. ${\bf i} = (1,2,1,2)$ and ${\bf i'} = (2,1,2,1)$. Let:
$$p_1 = \frac{t_2 t_3^2 t_4}{\pi_2}, p_2 = \frac{\pi_2}{\pi_1}, p_3 = \frac{\pi_1^2}{\pi_2}, p_4 = \frac{t_1 t_2 t_3}{\pi_1}$$
where $\pi_1 = t_1 t_2 + (t_1 + t_3)t_4, \pi_2 = t_1^2 t_2 + (t_1 + t_3)^2 t_4$. Then:
$$(t_1, t_2, t_3, t_4) \stackrel{\mathcal{L}}{=}  \left( \gamma_{a_1}, \gamma_{a_1 + a_2}, \gamma_{a_1 + 2 a_2}, \gamma_{a_2} \right)$$
\begin{center} if and only if \end{center}
$$(p_1, p_2, p_3, p_4) \stackrel{\mathcal{L}}{=}  \left( \gamma_{a_2}, \gamma_{a_1 + 2 a_2}, \gamma_{a_1 + a_2}, \gamma_{a_1} \right)$$

 \item Type $C_2$ (table \ref{tab:roots_C2}): $w_0 = s_1 s_2 s_1 s_2 = s_2 s_1 s_2 s_1$. ${\bf i} = (1,2,1,2)$ and ${\bf i'} = (2,1,2,1)$. Let:
$$p_1 = \frac{t_2 t_3 t_4}{\pi_1}, p_2 = \frac{\pi_1^2}{\pi_2}, p_3 = \frac{\pi_2}{\pi_1}, p_4 = \frac{t_1 t_2^3 t_3}{\pi_1}$$
where $\pi_1 = t_1 t_2 + (t_1 + t_3)t_4, \pi_2 = t_3 t_4^2 + (t_2 + t_4)^2 t_1$. Then:
$$(t_1, t_2, t_3, t_4) \stackrel{\mathcal{L}}{=}  \left( \gamma_{a_1}, \gamma_{2 a_1 + a_2}, \gamma_{a_1 + a_2}, \gamma_{a_2} \right)$$
\begin{center} if and only if \end{center}
$$(p_1, p_2, p_3, p_4) \stackrel{\mathcal{L}}{=}  \left( \gamma_{a_2}, \gamma_{a_1 + a_2}, \gamma_{2 a_1 + a_2}, \gamma_{a_1} \right)$$

 \item Type $G_2$ (table \ref{tab:roots_G2}): $w_0 = s_1 s_2 s_1 s_2 s_1 s_2 = s_2 s_1 s_2 s_1 s_2 s_1$. ${\bf i} = (1,2,1,2,1,2)$ and ${\bf i'} = (2,1,2,1,2,1)$. Let: 
$$ p_1 = \frac{t_2 t_3^3 t_4^2 t_5^3 t_6}{\pi_3}, p_2 = \frac{\pi_3}{\pi_2}, p_3 = \frac{\pi_2^3}{\pi_3 \pi_4} $$
$$ p_4 = \frac{\pi_3}{\pi_1 \pi_2}, p_5 = \frac{\pi^3_1}{\pi_4}, p_6 = \frac{t_1 t_2 t_3^2 t_4 t_5}{\pi_1}$$
where:
\begin{center}
 
\begin{align*}
 \pi_1 = & t_1 t_2 t_3^2 t_4 + t_1 t_2 (t_3 + t_5 )^2 t_6 + (t_1 + t_3 )t_4 t_5^2 t_6 
\end{align*}
\begin{align*}
 \pi_2 = & t_1^2 t_2^2 t_3^3 t_4 + t_1^2 t_2^2 (t_3 + t_5 )^3 t_6 + (t_1 + t_3 )^2 t_4^2 t_5^3 t_6\\
         & + t_1 t_2 t_4 t_5^2 t_6 (3t_1 t_3 + 2t_2 + 2t_3 t_5 + 2t_1 t_5 )
\end{align*}
\begin{align*}
\pi_3 = & t_1^3 t_2^2 t_3^3 t_4 + t_1^3 t_2^2 (t_3 + t_5 )^3 t_6 + (t1 + t3 )^3 t_4^2 t_5^3 t_6 \\
        & + t_1^2 t_2 t_4 t_5^2 t_6 (3t_1 t_3 + 3t_2 + 3t_3 t_5 + 2t_1 t_5 )
\end{align*}
\begin{align*}
\pi_4 = & t_1^2 t_2^2 t_3^3 t_4 ( t_1 t_2 t_3^3 t_4 + 2t_1 t_2 (t_3 + t_5 )^3 t_6 + (3t_1 t_3 + 3t_3^2 + 3t_3 t_5 + 2t_1 t_5 )t_4 t_5^2 t_6)\\
        & + t_6^2 ( t_1 t_2 (t_3 + t_5 )^2 + (t_1 + t_3 )t_4 t_5^2)^3 
\end{align*}
\end{center}

Then:
$$(t_1, t_2, t_3, t_4, t_5, t_6) \stackrel{\mathcal{L}}{=}  \left( \gamma_{a_1}, \gamma_{3 a_1 + a_2}, \gamma_{2 a_1 + a_2}, \gamma_{ 3 a_1 + 2 a_2}, \gamma_{a_1 + a_2}, \gamma_{ a_2} \right)$$
\begin{center} if and only if \end{center}
$$(p_1, p_2, p_3, p_4, p_5, p_6) \stackrel{\mathcal{L}}{=}  \left( \gamma_{ a_2}, \gamma_{a_1 + a_2}, \gamma_{ 3 a_1 + 2 a_2}, \gamma_{2 a_1 + a_2}, \gamma_{3 a_1 + a_2}, \gamma_{a_1} \right) $$

\end{itemize}
\end{thm}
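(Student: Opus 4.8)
The plan is to obtain all four identities directly from Proposition~\ref{proposition:gamma_equality_in_law}, which is itself the coordinate-free shadow of the exit-law formula in Theorem~\ref{thm:N_infty_law}. The one structural input is that, in any rank~$2$ root system, the longest element $w_0$ has exactly two reduced words ${\bf i}$ and ${\bf i'}$ --- precisely the two listed in each bullet --- and that they differ by the single braid move available (the full Coxeter relation $\underbrace{s_i s_j \cdots}_{d} = \underbrace{s_j s_i \cdots}_{d}$, with $d = \ell(w_0)$). So the whole content of the theorem is to transcribe Proposition~\ref{proposition:gamma_equality_in_law} for this one pair $({\bf i},{\bf i'})$, in each of the types $A_2$, $B_2$, $C_2$, $G_2$.

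Concretely I would proceed in four steps. First, read off the positive-root enumerations $(\beta_{{\bf i},j})_j$ and $(\beta_{{\bf i'},j})_j$ from the appendix tables~\ref{tab:roots_A2}, \ref{tab:roots_B2}, \ref{tab:roots_C2}, \ref{tab:roots_G2}, and evaluate the pairings $\langle \beta_j^\vee,\mu\rangle$ at $\mu = a_1 \omega_i^\vee + a_2 \omega_j^\vee$; since $\mu \in C$ these are positive, and they come out to be exactly the two lists of gamma indices displayed in the statement. Second, compute the transition map $R_{{\bf i},{\bf i'}} = x_{\bf i'}^{-1}\circ x_{\bf i}$ explicitly, by expanding the product $x_{i_1}(t_1)\cdots x_{i_m}(t_m)$ in a faithful representation and solving for the parameters of the reversed factorization; this reproduces the rational maps $(t_\bullet)\mapsto(p_\bullet)$ written in each bullet, which agree with the formulas tabulated in \cite{bib:BZ97}, Theorem~5.3. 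Third, apply Proposition~\ref{proposition:gamma_equality_in_law} to the ordered pair $({\bf i},{\bf i'})$: if $(t_1,\dots,t_m)$ has the law of independent gammas with the first list of parameters, then $(p_1,\dots,p_m) = R_{{\bf i},{\bf i'}}(t_1,\dots,t_m)$ has the law of independent gammas with the second list. Fourth, for the converse direction, use that $R_{{\bf i},{\bf i'}}$ is a bijection of $\R_{>0}^m$ with inverse $R_{{\bf i'},{\bf i}}$, and apply Proposition~\ref{proposition:gamma_equality_in_law} to the reversed pair $({\bf i'},{\bf i})$; composing with this inverse yields the other implication, hence the ``if and only if''.

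The only genuine work is the explicit algebra of the first two steps, and it is by far heaviest in the non-simply-laced cases: in types $B_2$, $C_2$ and $G_2$ one must carefully distinguish a root $\beta$ from its coroot $\beta^\vee$ and keep the normalizations $\langle\alpha,\alpha\rangle/2$ straight when forming $\langle\beta_j^\vee,\mu\rangle$, while the $G_2$ transition map forces one to carry the auxiliary polynomials $\pi_1,\dots,\pi_4$ through a sizable symbolic computation, best cross-checked against \cite{bib:BZ97}. Conceptually there is no obstacle: granted Proposition~\ref{proposition:gamma_equality_in_law}, the theorem is merely its transcription into Lusztig coordinates. As a sanity check one notes that, after this transcription, the $A_2$ identity is exactly the classical beta--gamma statement that $\gamma_{a_1}+\gamma_{a_2}$ is independent of $\gamma_{a_1}/(\gamma_{a_1}+\gamma_{a_2})$; no comparably elementary derivation is known for $B_2$, $C_2$ and $G_2$, which is why these appear to be new.
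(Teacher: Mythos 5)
Your proposal matches the paper's argument: it transcribes Proposition~\ref{proposition:gamma_equality_in_law} into Lusztig coordinates for the unique braid move in each rank~$2$ root system, reading the gamma parameters off the positive-root enumerations in the appendix tables and taking the transition maps $R_{\bf i, i'}$ from Berenstein--Zelevinsky, Theorem~5.3. Your only addition is the explicit remark that the ``if and only if'' follows from $R_{{\bf i'},{\bf i}} = R_{{\bf i},{\bf i'}}^{-1}$, which the paper leaves implicit.
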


\section{Identities in law}
\label{section:identities}

\subsection{Gamma identities}
Let us make some remarks on the obtained gamma identities. The classical beta-gamma algebra identity states that if $\gamma_a$ and $\gamma_b$ are two independent gamma random variables with parameters $a>0$ and $b>0$, then:
\begin{align}
\label{eqn:pair_gamma}
& \left( \frac{\gamma_a}{\gamma_a + \gamma_b}, \gamma_a + \gamma_b \right) 
\end{align}
form a pair of independent random variables.

This fact is easy to retrieve from the $A_2$ case. Indeed, by considering $(t_1, t_2, t_3) \eqlaw  \left( \gamma_{a_1}, \gamma_{a_1 + a_2}, \gamma_{a_2} \right)$ independent variables with the designated laws and $(p_1, p_2, p_3)$ algebraically defined as above, we know that $p_2$ and $p_3$ are independent. And since $p_2$ independent of $t_2$, we get that $p_2 = \gamma_{a_1} + \gamma_{a_2}$ is independent of $\frac{p_3}{t_2} = \frac{\gamma{a_1}}{\gamma_{a_1} + \gamma_{a_2}}$.

Moreover, Lukacs \cite{bib:Luk55} proved that the independence for the pair in \eqref{eqn:pair_gamma} characterizes the gamma law. This leads to the first open question in section \ref{section:open_questions}.

\subsection{Exponential identities}
A generic exponential random variable with parameter $\mu>0$ is denoted by ${\bf e}_{\mu}$:
$$ \P\left( {\bf e}_{\mu} \in dx \right) = \mu e^{-\mu x} \mathds{1}_{\R_+}\left(x\right) dx$$

Consider a rational expression in $k$ variables $a \in \Q\left( x_1, \dots, x_k \right)$ that has no minus sign. Tropicalizing $a$ to $\left[ a \right]_{trop}$ tantamounts to replacing the algebraic operations $\left( +, \times, / \right)$ by $\left( \min, +, - \right)$. A rational expression in the operations $\left( \min, +, - \right)$ is now commonly referred to as a tropical expression. Formally, if $a$ and $b$ are rational subtraction-free functions, then:
$$ \left[ a + b \right]_{trop} = \min( a, b) $$
$$ \left[ a   b \right]_{trop} = a + b $$
$$ \left[ a / b \right]_{trop} = a - b $$
$$ \left[ a \circ b \right]_{trop} = \left[ a \right]_{trop} \circ \left[ a \circ b \right]_{trop}$$

For example:
$$ \left[ \frac{t_2 t_3}{t_1 + t_3} \right]_{trop} = t_2 + t_3 - \min(t_1, t_3)$$

A less algebraic definition could be used, using a limit that always exists:
\begin{proposition}[Analytic tropicalization]
\label{proposition:analytic_tropicalization}
For a rational and subtraction-free expression in $k$ variables $a \in \Q\left( x_1, \dots, x_k \right)$, we have that for $h>0$:
\begin{align}
-h \log a\left( e^{-\frac{x_1}{h}}, \dots, e^{-\frac{x_k}{h}} \right) & = [a]_{trop}\left( x_1, \dots, x_k\right) + \Oc(h)
\end{align}
where $\Oc(h)$ is a quantity such that $\frac{\Oc(h)}{h}$ is bounded as $h \rightarrow 0$, uniformly in the variables $(x_1, \dots, x_k)$.
\end{proposition}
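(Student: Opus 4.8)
The plan is to proceed by structural induction on the way the subtraction-free rational expression $a$ is built up from the variables $x_1, \dots, x_k$ using the three operations $+$, $\times$, $/$. Throughout, write $\e(h) := a\left( e^{-x_1/h}, \dots, e^{-x_k/h}\right)$ for the evaluation at the exponentiated arguments, so that the claim is $-h\log \e(h) = [a]_{trop}(x_1,\dots,x_k) + \Oc(h)$ with the error uniform in $(x_1,\dots,x_k)$. The base case is $a = x_i$, where $\e(h) = e^{-x_i/h}$ and $-h\log \e(h) = x_i = [x_i]_{trop}$ exactly, with zero error.

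For the inductive step I would treat each operation separately. If $a = bc$ then $\e_a(h) = \e_b(h)\e_c(h)$, so $-h\log\e_a = (-h\log\e_b) + (-h\log\e_c)$, and applying the inductive hypothesis to $b$ and $c$ gives $[b]_{trop} + [c]_{trop} + \Oc(h) = [bc]_{trop} + \Oc(h)$; the quotient case $a = b/c$ is identical with a sign. The only substantive case is the sum $a = b + c$. Set $\b := -h\log\e_b(h)$, $\c := -h\log\e_c(h)$, so $\e_b = e^{-\b/h}$, $\e_c = e^{-\c/h}$, and
\begin{align*}
-h\log\e_a(h) = -h\log\left( e^{-\b/h} + e^{-\c/h}\right) = \min(\b, \c) - h\log\left( 1 + e^{-|\b - \c|/h}\right).
\end{align*}
Since $0 \le h\log(1 + e^{-|\b-\c|/h}) \le h\log 2$, this equals $\min(\b,\c) + \Oc(h)$ with an error bounded by $h\log 2$ \emph{independently of $\b$ and $\c$}. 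By the inductive hypothesis $\b = [b]_{trop} + \Oc(h)$ and $\c = [c]_{trop} + \Oc(h)$ uniformly; because $\min$ is $1$-Lipschitz in each argument, $\min(\b,\c) = \min([b]_{trop},[c]_{trop}) + \Oc(h) = [b+c]_{trop} + \Oc(h)$, again uniformly. Composing the two displayed error terms (both of the form ``$\Oc(h)/h$ bounded, uniformly in the variables'') closes the induction.

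The one point requiring care — and the main obstacle, such as it is — is the bookkeeping of uniformity: one must check that at each node the $\Oc(h)$ bound depends only on the \emph{shape} of the subexpression and not on the values $(x_1,\dots,x_k)$, which is why it is essential that the inequality $0 \le h\log(1+e^{-t/h}) \le h\log 2$ holds for \emph{all} $t \ge 0$ and that $\min$ is globally Lipschitz. A clean way to package this is to prove by induction the slightly stronger statement that there is a constant $c_a$, depending only on $a$ (in fact $c_a = (\#\text{additions in }a)\cdot \log 2$ works), such that $\left| -h\log\e_a(h) - [a]_{trop}(x)\right| \le c_a\, h$ for all $h > 0$ and all $x \in \R^k$; the product and quotient steps leave $c_a$ unchanged (up to adding the two children's constants) and the sum step adds $\log 2$. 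This makes the propagation of the error transparent and avoids any appeal to non-uniform asymptotics.
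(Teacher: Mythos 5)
Your proof is correct and follows essentially the same route as the paper: structural induction on the expression, with product and quotient handled trivially by additivity of the logarithm and the sum case as the only substantive step, controlled by the uniform bound $0\le h\log\bigl(1+e^{-t/h}\bigr)\le h\log 2$ and the $1$-Lipschitz property of $\min$; the paper's version phrases this with $\Oc$-notation, while your explicit constant $c_a$ makes the uniformity bookkeeping cleaner. One small gap to close: subtraction-free rational expressions in this paper routinely carry positive integer coefficients (e.g.\ $3t_1 t_3 + 2 t_2 t_5 + \dots$ in the $G_2$ formulas), so your base case must also include positive rational constants $q$, for which $[q]_{trop}=0$ and the error is exactly $h\,|\log q|$; this is harmless but means the clean formula $c_a=(\#\text{additions})\cdot\log 2$ needs to be augmented by $\sum |\log q|$ over the constants appearing in $a$.
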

\begin{proof}
Let us prove the statement by induction on the size of the expression $a$, meaning the number of operations it uses (addition, multiplication and division). For the base case, notice that if $a$ is a monomial or a constant, then the statement is trivially true.
Now, for the inductive step, if $a$ is a product or ratio of two rational subtraction-free expressions, for which the statement is true, the statement follows using the properties of the logarithm. If $a$ is a sum whose terms satisfy the induction hypothesis:
$$ a = a_1 + a_2 \ ,$$
then for ${\bf x} = \left( x_1, \dots, x_k \right) \in \R^k$:
\begin{align*}
  & -h \log a\left( e^{-\frac{x_1}{h}}, \dots, e^{-\frac{x_k}{h}} \right) \\
= & -h \log\left( a_1\left( e^{-\frac{x_1}{h}}, \dots, e^{-\frac{x_k}{h}} \right) 
                + a_2\left( e^{-\frac{x_k}{h}}, \dots, e^{-\frac{x_k}{h}} \right) \right)\\
= & -h \log\left( e^{ \Oc(1)-h^{-1} [a_1]_{trop}( {\bf x} )}
                + e^{ \Oc(1)-h^{-1} [a_2]_{trop}( {\bf x} )} \right)\\
= & \min\left( [a_1]_{trop}( {\bf x} ), [a_2]_{trop}( {\bf x} )\right) + \Oc(h)\\
  & \ \ -h \log\left( 1 + e^{\Oc(1) - h^{-1} |[a_1]_{trop}-[a_2]_{trop}|( {\bf x} )} \right)\\
= & [a]_{trop}({\bf x}) + \Oc(h)
\end{align*}
\end{proof}

It is well known that $R_{\bf i, \bf i'}$ tropicalize to changes of parametrizations in Lusztig's canonical basis (Theorem \ref{thm:lusztig_canonical_basis}). Therefore, we consider the following crystallizing procedure for the rational subtraction-free expressions $R_{\bf i, i'}$. The use of the logarithm function has to be understood component-wise:
$$ \left[ R_{\bf i, i'} \right]_{trop} (u_1, u_2, \dots ) := \lim_{h \rightarrow 0 } - h \log R_{\bf i, i'}\left( e^\frac{-u_1}{h}, e^\frac{-u_2}{h}, \dots \right) $$

Now, on the probabilistic side, one can recover a tropical version of gamma identities involving exponential variables. The following lemma shows how gamma variables degenerate to exponential variables.
\begin{lemma}[Yor \cite{bib:Yor_private}]
\label{lemma:marc_yor}
 As $h>0$ goes to zero, $-h \log \gamma_{h \mu}$ converges in law to ${\bf e}_{\mu}$, an exponential variable 
with parameter $\mu$.
\end{lemma}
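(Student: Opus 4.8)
The plan is to pass through the Mellin transform of the gamma law. Recall that for $\gamma_a$ a gamma variable with parameter $a>0$ one has $\E\left[\gamma_a^s\right] = \Gamma(a+s)/\Gamma(a)$ for every $s>-a$. Writing $Y_h := -h\log\gamma_{h\mu}$, so that $e^{sY_h} = \gamma_{h\mu}^{-sh}$, this gives for every $s<\mu$
$$ \E\left[e^{sY_h}\right] \;=\; \frac{\Gamma\left(h(\mu-s)\right)}{\Gamma(h\mu)} \ . $$

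The next step is to let $h\to 0^+$. Using the functional equation $\Gamma(x)=\Gamma(x+1)/x$ and continuity of $\Gamma$ at $1$, we have $\Gamma(hz)=\tfrac{1}{hz}\bigl(1+o(1)\bigr)$ as $h\to 0$ for any fixed $z>0$, whence
$$ \E\left[e^{sY_h}\right] \;=\; \frac{\mu}{\mu-s}\cdot\frac{\Gamma\left(h(\mu-s)+1\right)}{\Gamma(h\mu+1)} \;\longrightarrow\; \frac{\mu}{\mu-s} \qquad (h\to 0^+), $$
and $s\mapsto \mu/(\mu-s)$ is precisely the moment generating function of ${\bf e}_\mu$, finite on the open interval $(-\infty,\mu)\ni 0$.

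Finally, since the moment generating functions of the real-valued variables $Y_h$ are finite and converge to that of ${\bf e}_\mu$ on a neighbourhood of the origin, the continuity theorem for moment generating functions (Curtiss) yields that $Y_h$ converges in law to ${\bf e}_\mu$, which is the assertion. There is essentially no obstacle; the one point deserving care is that $Y_h$ is genuinely $\R$-valued (it is negative on $\{\gamma_{h\mu}>1\}$), so one must use the two-sided moment generating function and the version of the continuity theorem valid on an open interval around $0$, rather than only the Laplace transform on $\R_+$. An alternative route avoiding this subtlety is to compute the distribution function directly: $\P(Y_h\le t)=\P(\gamma_{h\mu}\ge e^{-t/h})=\Gamma(h\mu,e^{-t/h})/\Gamma(h\mu)$ in terms of the upper incomplete gamma function; splitting $\int_{e^{-t/h}}^{\infty}=\int_{e^{-t/h}}^{1}+\int_{1}^{\infty}$ and using $\int_{e^{-t/h}}^{1}x^{h\mu-1}\,dx=\frac{1-e^{-\mu t}}{h\mu}$, one checks this tends to $1-e^{-\mu t}$ for $t>0$ and to $0$ for $t\le 0$, again giving the weak convergence.
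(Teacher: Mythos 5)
Your proof is correct, but takes a genuinely different route from the paper. You compute the moment generating function $\E\left[e^{sY_h}\right]=\Gamma\bigl(h(\mu-s)\bigr)/\Gamma(h\mu)$ via the Mellin transform, extract the factor $\mu/(\mu-s)$ using the functional equation $\Gamma(x)=\Gamma(x+1)/x$, and invoke Curtiss's continuity theorem; the remark about $Y_h$ being genuinely $\R$-valued and the alternative direct computation of the distribution function are both well placed. The paper, by contrast, reduces to $\mu=1$ by scaling and then uses the beta--gamma algebra identity $\gamma_h\eqlaw\beta_{h,1}\,\gamma_{1+h}$ together with the observation that $\bigl(\beta_{h,1}\bigr)^h$ is uniform on $(0,1)$, so that $-h\log\beta_{h,1}\eqlaw{\bf e}_1$ \emph{exactly} for every $h>0$; the error term $-h\log\gamma_{1+h}$ then vanishes in probability. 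Your argument is the ``direct proof'' the paper alludes to (and is perhaps easier to check line by line), while the paper's argument is a soft, transform-free identity-in-law derivation that exhibits ${\bf e}_\mu$ as an exact summand before any limit is taken — which is thematically consonant with the beta-gamma algebra perspective of the whole paper. Both are valid; yours requires knowing (or citing) the MGF continuity theorem, whereas the paper's requires the specific beta-gamma factorisation and the observation about powers of a $\beta_{h,1}$ variable.
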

\begin{proof}
 A direct proof using densities is possible and straightforward. Let us rather present an aesthetically more pleasing derivation suggested by Marc Yor during a private communication. We can reduce the problem to $\mu=1$ because ${\bf e}_{\mu} \eqlaw \frac{1}{\mu} {\bf e}_{1}$ and:
 $$ \lim_{h \rightarrow 0} -h \log \gamma_{h \mu} = \frac{1}{\mu} \lim_{h \rightarrow 0} -h \log \gamma_{h}$$
 Now, if $\beta_{a,b}$ is a beta random variable with parameters $a>0$ and $b>0$, we can write $\gamma_{h}$ as a product of two independent random variables:
 $$ \gamma_h \eqlaw \beta_{h, 1} \gamma_{1+h}$$
 Knowing that $\left( \beta_{h, 1} \right)^h$ is in fact a uniform random variable, $-\log \beta_{h, 1}^h$ is distributed as ${\bf e}_{1}$. Hence:
 $$ -h \log \gamma_{h} \eqlaw {\bf e}_{1} - h \log \gamma_{1+h}$$
 The proof is finished upon noticing that $h \log \gamma_{1+h}$ converges to zero in probability.
 \end{proof}
 
Hence the tropical version of proposition \ref{proposition:gamma_equality_in_law}:
\begin{proposition}
\label{proposition:exponential_equality_in_law} 
If ${\bf i}$ and ${\bf i'}$ are reduced words of $w_0$, then for every $\mu \in C$, the following equality in law holds between independent exponential random variables:
$$ \left( {\bf e}_{ \langle\beta_{{\bf i'}, 1}^{\vee}, \mu\rangle } , \dots, {\bf e}_{ \langle\beta_{{\bf i'},m}^{\vee}, \mu\rangle } \right) \eqlaw \left[ R_{\bf i, i'} \right]_{trop}
   \left( {\bf e}_{ \langle\beta_{{\bf i }, 1}^{\vee}, \mu\rangle } , \dots, {\bf e}_{ \langle\beta_{{\bf i },m}^{\vee}, \mu\rangle } \right) $$
\end{proposition}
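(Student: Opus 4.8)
The plan is to derive Proposition~\ref{proposition:exponential_equality_in_law} as the tropical limit of Proposition~\ref{proposition:gamma_equality_in_law}, using Lemma~\ref{lemma:marc_yor} to degenerate gamma variables to exponentials and Proposition~\ref{proposition:analytic_tropicalization} to control the rational map $R_{\bf i, i'}$ under the same scaling. First I would fix $\mu \in C$ and a small parameter $h>0$, and apply Proposition~\ref{proposition:gamma_equality_in_law} not with $\mu$ but with $h\mu$ (which is still in $C$, since $C$ is a cone): this gives
\begin{equation*}
\left( \gamma_{ h\langle\beta_{{\bf i'}, 1}^{\vee}, \mu\rangle } , \dots, \gamma_{ h\langle\beta_{{\bf i'},m}^{\vee}, \mu\rangle } \right) \eqlaw R_{\bf i, i'}
   \left( \gamma_{ h\langle\beta_{{\bf i }, 1}^{\vee}, \mu\rangle } , \dots, \gamma_{ h\langle\beta_{{\bf i },m}^{\vee}, \mu\rangle } \right) \ ,
\end{equation*}
where on each side the gamma variables are independent. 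Applying the continuous, hence measurable, map $(x_1,\dots,x_m)\mapsto(-h\log x_1,\dots,-h\log x_m)$ to both sides preserves equality in law, so it remains to identify the limit in law of each side as $h\to 0$.

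For the left-hand side, Lemma~\ref{lemma:marc_yor} gives that $-h\log\gamma_{h\langle\beta_{{\bf i'},j}^{\vee},\mu\rangle}$ converges in law to ${\bf e}_{\langle\beta_{{\bf i'},j}^{\vee},\mu\rangle}$, and since the coordinates are independent this upgrades to joint convergence of the vector to $\left( {\bf e}_{ \langle\beta_{{\bf i'}, 1}^{\vee}, \mu\rangle } , \dots, {\bf e}_{ \langle\beta_{{\bf i'},m}^{\vee}, \mu\rangle } \right)$ with independent components. For the right-hand side I would write $-h\log R_{\bf i,i'}\!\left(\gamma_{h c_1},\dots,\gamma_{hc_m}\right)$, with $c_j=\langle\beta_{{\bf i},j}^{\vee},\mu\rangle$, and set $U_j^{(h)} := -h\log\gamma_{hc_j}$, so that the expression equals $-h\log R_{\bf i,i'}\!\left(e^{-U_1^{(h)}/h},\dots,e^{-U_m^{(h)}/h}\right)$. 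By Proposition~\ref{proposition:analytic_tropicalization} (applied componentwise, $R_{\bf i,i'}$ being rational and subtraction-free by Theorem~\ref{thm:totally_positive_parametrizations} and the tables), this equals $\left[R_{\bf i,i'}\right]_{trop}\!\left(U_1^{(h)},\dots,U_m^{(h)}\right) + \Oc(h)$, where the error is uniform in the $U_j^{(h)}$. Since $\left(U_1^{(h)},\dots,U_m^{(h)}\right)$ converges in law to $\left( {\bf e}_{c_1},\dots,{\bf e}_{c_m}\right)$ with independent components and $\left[R_{\bf i,i'}\right]_{trop}$ is continuous (it is a finite min-plus expression), the continuous mapping theorem gives convergence in law of the main term to $\left[R_{\bf i,i'}\right]_{trop}\!\left( {\bf e}_{c_1},\dots,{\bf e}_{c_m}\right)$; the uniform $\Oc(h)$ term vanishes, so the whole right-hand side converges in law to the same limit. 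Matching the two limits yields the claimed identity.

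The main obstacle is the handling of the $\Oc(h)$ error in the right-hand side: Proposition~\ref{proposition:analytic_tropicalization} gives a \emph{deterministic} uniform bound $|\Oc(h)| \le Kh$ valid for all values of the arguments, which is exactly what is needed to conclude that the random perturbation is $o_{\P}(1)$ regardless of the (random, and a priori unbounded) values of $U_j^{(h)}$ — so this is really a routine application of Slutsky's lemma once the uniformity is invoked. A secondary point worth spelling out is that Lemma~\ref{lemma:marc_yor} is stated for a single variable, so I would note that independence across coordinates plus convergence of marginals gives joint convergence, e.g. via characteristic functions or by the portmanteau theorem on the product space. With these two observations in place the argument is a direct limiting passage from Proposition~\ref{proposition:gamma_equality_in_law}, and no new structural input is required.
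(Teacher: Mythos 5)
Your argument follows essentially the same route as the paper's own proof: specialize Proposition~\ref{proposition:gamma_equality_in_law} to the drift $h\mu$, apply $-h\log$ componentwise, and pass to the $h\to 0$ limit via Lemma~\ref{lemma:marc_yor} on the left and the tropicalization of $R_{\bf i,i'}$ on the right. You make explicit a few steps the paper leaves implicit — the uniform $\Oc(h)$ bound from Proposition~\ref{proposition:analytic_tropicalization}, the Slutsky-type handling of the error term, and the upgrade from marginal to joint convergence using independence — which is welcome, but it is the same proof.
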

\begin{proof}
Before tropicalization, for the parameter $h \mu$, we have:
$$ \left( \gamma_{ \langle\beta_{{\bf i'}, 1}^{\vee}, h\mu\rangle } , \dots, \gamma_{ \langle\beta_{{\bf i'},m}^{\vee}, h\mu\rangle } \right) \eqlaw R_{\bf i, i'}
   \left( \gamma_{ \langle\beta_{{\bf i }, 1}^{\vee}, h\mu\rangle } , \dots, \gamma_{ \langle\beta_{{\bf i },m}^{\vee}, h\mu\rangle } \right) $$
Then, using the previous lemma:
\begin{align*}
& \left( {\bf e}_{ \langle\beta_{{\bf i'}, 1}^{\vee}, \mu\rangle } , {\bf e}_{ \langle\beta_{{\bf i'},2}^{\vee}, \mu\rangle }, \dots \right)\\
& \eqlaw \lim_{h \rightarrow 0} -h \log \left( \gamma_{ \langle\beta_{{\bf i'}, 1}^{\vee}, h\mu\rangle } , \gamma_{ \langle\beta_{{\bf i'},2}^{\vee}, h\mu\rangle }, \dots  \right) \\
& \eqlaw \lim_{h \rightarrow 0} -h \log R_{\bf i, i'} \left( \gamma_{ \langle\beta_{{\bf i}, 1}^{\vee}, h\mu\rangle } , \gamma_{ \langle\beta_{{\bf i},2}^{\vee}, h\mu\rangle }, \dots \right)\\
& =  \lim_{h \rightarrow 0} -h \log R_{\bf i, i'} \left( \exp\left( \frac{- h \log\left( \gamma_{h \langle\beta_{{\bf i}, 1}^{\vee}, \mu\rangle} \right) }{h} \right),
                                                         \exp\left( \frac{- h \log\left( \gamma_{h \langle\beta_{{\bf i}, 2}^{\vee}, \mu\rangle} \right)}{h}   \right), \dots \right)\\
& = \left[ R_{\bf i, i'} \right]_{trop}
   \left( {\bf e}_{ \langle\beta_{{\bf i }, 1}^{\vee}, \mu\rangle } , {\bf e}_{ \langle\beta_{{\bf i },2}^{\vee}, \mu\rangle }, \dots \right) 
\end{align*}
\end{proof}

Again, as all the information is contained in the rank two case, there is a finite list of identities between exponential variables that sums up the results so far. 
\begin{thm}[Exponential identities]
\label{thm:exponential_identities}
Associated to every rank $2$ root system, the following identities in law hold:
\begin{itemize}
 \item Type $A_2$ (table \ref{tab:roots_A2}): $w_0 = s_1 s_2 s_1 = s_2 s_1 s_2$. Let $ p_1 = t_2 + t_3 - \min(t_1, t_3), p_ 2 = \min( t_1, t_3) , p_ 3 = t_1 + t_2 - \min(t_1, t_3)$.Then:\\
$$(t_1, t_2, t_3) \stackrel{\mathcal{L}}{=}  \left( {\bf e}_{a_1}, {\bf e}_{a_1 + a_2}, {\bf e}_{a_2} \right)$$
\begin{center} if and only if \end{center}
$$(p_1, p_2, p_3) \stackrel{\mathcal{L}}{=}  \left( {\bf e}_{a_2}, {\bf e}_{a_1 + a_2}, {\bf e}_{a_1} \right) $$

 \item Type $B_2$ (table \ref{tab:roots_B2}): $w_0 = s_1 s_2 s_1 s_2 = s_2 s_1 s_2 s_1$. Let:
$$p_1 = t_2 + 2 t_3 + t_4 - \pi_2, p_2 = \pi_2 - \pi_1, p_3 = 2\pi_1 - \pi_2, p_4 = t_1 + t_2 + t_3 - \pi_1$$
where $\pi_1 = \min( t_1 + t_2, \min(t_1 , t_3) + t_4), \pi_2 = \min( 2 t_1 + t_2, 2\min(t_1 , t_3) + t_4 )$. Then:
$$(t_1, t_2, t_3, t_4) \stackrel{\mathcal{L}}{=}  \left( {\bf e}_{a_1}, {\bf e}_{a_1 + a_2}, {\bf e}_{a_1 + 2 a_2}, {\bf e}_{a_2} \right)$$
\begin{center} if and only if \end{center}
$$(p_1, p_2, p_3, p_4) \stackrel{\mathcal{L}}{=}  \left( {\bf e}_{a_2}, {\bf e}_{a_1 + 2 a_2}, {\bf e}_{a_1 + a_2}, {\bf e}_{a_1} \right) $$

 \item In the same fashion one can deduce the exponential identities for types $C_2$ and $G_2$.
\end{itemize}

\end{thm}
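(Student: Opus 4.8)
The plan is to obtain Theorem~\ref{thm:exponential_identities} as the specialization of Proposition~\ref{proposition:exponential_equality_in_law} to the rank~$2$ root systems, feeding in the explicit subtraction-free transition maps $R_{\bf i,i'}$ already used in Theorem~\ref{thm:beta_gamma_identities}. First I would recall that in a rank~$2$ system the longest element $w_0$ has exactly two reduced words, ${\bf i}=(1,2,1,\dots)$ and ${\bf i'}=(2,1,2,\dots)$ of length $m=\ell(w_0)$, differing by a single braid move. Writing $\mu = a_1\omega_1^\vee + a_2\omega_2^\vee \in C$ and reading the positive root enumerations from Tables~\ref{tab:roots_A2}, \ref{tab:roots_B2}, \ref{tab:roots_C2}, \ref{tab:roots_G2}, the exponential parameters $\langle\beta_{{\bf i},j}^\vee,\mu\rangle$ and $\langle\beta_{{\bf i'},j}^\vee,\mu\rangle$ are exactly the subscripts already displayed in Theorem~\ref{thm:beta_gamma_identities}. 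With this notation, Proposition~\ref{proposition:exponential_equality_in_law} says precisely that the tuple $(p_1,\dots,p_m)$ of independent exponentials with the ${\bf i'}$-parameters equals in law $[R_{\bf i,i'}]_{trop}$ applied to the tuple $(t_1,\dots,t_m)$ of independent exponentials with the ${\bf i}$-parameters.

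It then remains to compute $[R_{\bf i,i'}]_{trop}$ explicitly. Since the maps $R_{\bf i,i'}$ are subtraction-free (Theorem~\ref{thm:totally_positive_parametrizations} and \cite{bib:BZ97}), Proposition~\ref{proposition:analytic_tropicalization} guarantees that the analytic tropicalization used in Proposition~\ref{proposition:exponential_equality_in_law} agrees with the combinatorial one, obtained by the substitutions $[a+b]_{trop}=\min(a,b)$, $[ab]_{trop}=a+b$, $[a/b]_{trop}=a-b$ applied to the closed formulas of Theorem~\ref{thm:beta_gamma_identities}. For type $A_2$ this gives $[\,t_2t_3/(t_1+t_3)\,]_{trop}=t_2+t_3-\min(t_1,t_3)$, $[\,t_1+t_3\,]_{trop}=\min(t_1,t_3)$ and $[\,t_1t_2/(t_1+t_3)\,]_{trop}=t_1+t_2-\min(t_1,t_3)$, which are the stated $p_j$; similarly one checks the listed gamma parameters become the asserted exponential parameters. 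For type $B_2$ one first tropicalizes $\pi_1\mapsto\min(t_1+t_2,\ \min(t_1,t_3)+t_4)$ and $\pi_2\mapsto\min(2t_1+t_2,\ 2\min(t_1,t_3)+t_4)$, and then the four $p_j$ tropicalize term by term to the expressions in the statement. The computations for types $C_2$ and $G_2$ have the same structure, only lengthier, which is why they are merely indicated; the only subtlety is the distributivity of $\min$ over $+$, used to match the tropicalized compositions monomial by monomial.

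Finally, the equivalence (``if and only if'') follows because $R_{\bf i,i'}$ and $R_{\bf i',i}$ are mutually inverse subtraction-free birational maps of $\R_{>0}^m$, so their tropicalizations are mutually inverse piecewise-linear bijections of $\R^m$; hence the equality in law of Proposition~\ref{proposition:exponential_equality_in_law} may be read with ${\bf i}$ and ${\bf i'}$ interchanged, yielding the converse implication. I expect the conceptual content to be carried entirely by Propositions~\ref{proposition:exponential_equality_in_law} and~\ref{proposition:analytic_tropicalization}, so the main obstacle is purely the bookkeeping in tropicalizing the $B_2$ (and $G_2$) rational functions and in verifying that the piecewise-linear maps so obtained coincide with the formulas listed in the statement.
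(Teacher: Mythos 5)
Your proposal is correct and follows exactly the route the paper implies: the paper states Theorem~\ref{thm:exponential_identities} as an immediate consequence of Proposition~\ref{proposition:exponential_equality_in_law} combined with the rank~$2$ reduction, and the explicit $p_j$ are obtained by tropicalizing the subtraction-free $R_{\bf i,i'}$ formulas already appearing in Theorem~\ref{thm:beta_gamma_identities}, precisely as you compute. Your additional remark that the ``if and only if'' comes from $[R_{\bf i',i}]_{trop}$ being the piecewise-linear inverse of $[R_{\bf i,i'}]_{trop}$ is sound and makes explicit a point the paper leaves implicit.
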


At this point, it seems very important to mention that in \cite{bib:BBO2}, a path model for Coxeter groups was developed, and exponential laws play a key role as infinima of a Brownian motion, with appropriate drift. There, once again, hidden identities in law can be found, involving general Coxeter braid relations. This goes beyond the crystallographic case we just considered.

This leads to the second open question in section \ref{section:open_questions}.

\subsection{Geometric identities}
Let a generic geometric random variable with parameter $0 < z < 1$ be denoted by ${\bf G}_z$:
$$ \forall k \in \N, \P\left( {\bf G}_z = k \right) = z^k \left( 1 - z \right)$$

Let ${\bf z} \in A$ such that $-\log{\bf z} \in C$. We write ${\bf z}^{\beta^\vee} = e^{\langle \beta^\vee, \log {\bf z} \rangle}$. The condition $-\log{\bf z} \in C$ entails that for every positive coroot $\beta^\vee \in \Phi^{+ \vee}$, $0<{\bf z}^{\beta^\vee}<1$. This allows us to formulate identities in law between geometric random variables.
\begin{proposition}
\label{proposition:geometric_equality_in_law} 
If ${\bf i}$ and ${\bf i'}$ are reduced words of $w_0$, then for every $\mu \in C$, the following equality in law holds between independent geometric random variables:
$$ \left( {\bf G}_{ {\bf z }^{ \beta_{{\bf i'}, 1}^{\vee} } } , \dots, {\bf G}_{ {\bf z }^{ \beta_{{\bf i'}, m}^{\vee} } } \right) \eqlaw \left[ R_{\bf i, i'} \right]_{trop}
   \left( {\bf G}_{ {\bf z }^{ \beta_{{\bf i }, 1}^{\vee} } } , \dots, {\bf G}_{ {\bf z }^{ \beta_{{\bf i }, m}^{\vee} } } \right) $$
\end{proposition}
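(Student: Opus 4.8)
The plan is to mirror the proof of Proposition \ref{proposition:exponential_equality_in_law}, replacing the degeneration of gamma variables into exponentials with a degeneration of gamma variables into geometrics. More precisely, I would first recall that Proposition \ref{proposition:gamma_equality_in_law} gives, for any parameter $\mu \in C$,
\begin{align*}
\left( \gamma_{ \langle\beta_{{\bf i'}, 1}^{\vee}, \mu\rangle } , \dots, \gamma_{ \langle\beta_{{\bf i'},m}^{\vee}, \mu\rangle } \right) \eqlaw R_{\bf i, i'}
   \left( \gamma_{ \langle\beta_{{\bf i }, 1}^{\vee}, \mu\rangle } , \dots, \gamma_{ \langle\beta_{{\bf i },m}^{\vee}, \mu\rangle } \right) \ .
\end{align*}
I would then apply this with $\mu$ replaced by $h \log \frac{1}{{\bf z}}$ for $h>0$ small, noting that $-\log{\bf z} \in C$ guarantees this is an admissible parameter, and that $\langle \beta^\vee, h\log\tfrac{1}{{\bf z}}\rangle = -h\langle\beta^\vee,\log{\bf z}\rangle = -h\log({\bf z}^{\beta^\vee})>0$.

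The key new ingredient is a discrete analogue of Lemma \ref{lemma:marc_yor}: for a gamma variable $\gamma_a$ with small parameter $a = -h\log z$ (equivalently $e^{-a/h} = z$), the quantity $\lfloor -h \log \gamma_a \rfloor_h$, or more simply $h \lfloor \tfrac{-\log \gamma_a}{h}\rfloor$ (rounding $-\log\gamma_a$ down to the nearest multiple of $h$), converges in law as $h \to 0$ to $h {\bf G}_z$ — i.e. after dividing by $h$, the integer part of $-\tfrac{1}{h}\log\gamma_{-h\log z}$ converges to ${\bf G}_z$. This is a standard fact underlying the geometric-to-exponential scaling limit: ${\bf e}_\mu = \lim_h h{\bf G}_{e^{-h\mu}}$, and one knows from Lemma \ref{lemma:marc_yor} that $-h\log\gamma_{h\mu} \to {\bf e}_\mu$; one checks the joint convergence survives the floor operation. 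I would prove it by the same device as in Lemma \ref{lemma:marc_yor}: write $\gamma_{-h\log z} \eqlaw \beta_{-h\log z, 1} \gamma_{1 - h\log z}$, use that $(\beta_{-h\log z,1})^{-h\log z}$ is uniform on $(0,1)$ so that $\lfloor \log_z \beta_{-h\log z,1}\rfloor$ is exactly ${\bf G}_z$-distributed, and control the error term $-h\log\gamma_{1-h\log z} \to 0$ in probability together with the fact that the floor of a sum differs from the floor of its dominant term by at most one with vanishing probability.

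With that in hand, the proof concludes exactly as in Proposition \ref{proposition:exponential_equality_in_law}: on both sides of the gamma identity at parameter $h\log\tfrac{1}{{\bf z}}$ I apply $v \mapsto h\lfloor \tfrac{-\log v}{h}\rfloor$ componentwise, pass to the limit $h\to 0$, and use Proposition \ref{proposition:analytic_tropicalization} to identify the limit of $-h\log R_{\bf i, i'}(e^{-u_1/h},\dots)$ with $[R_{\bf i, i'}]_{trop}(u_1,\dots)$; the floor is harmless because it changes each coordinate by at most $h = o(1)$, and $[R_{\bf i,i'}]_{trop}$ is continuous (it is piecewise-linear, being built from $\min, +, -$). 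Combined with the discrete limit lemma, this yields
$$ \left( {\bf G}_{ {\bf z }^{ \beta_{{\bf i'}, 1}^{\vee} } } , \dots, {\bf G}_{ {\bf z }^{ \beta_{{\bf i'}, m}^{\vee} } } \right) \eqlaw \left[ R_{\bf i, i'} \right]_{trop}
   \left( {\bf G}_{ {\bf z }^{ \beta_{{\bf i }, 1}^{\vee} } } , \dots, {\bf G}_{ {\bf z }^{ \beta_{{\bf i }, m}^{\vee} } } \right) \ ,$$
which is the claim. The main obstacle I anticipate is purely technical: justifying that the floor operation commutes with the weak limit and with the continuous map $[R_{\bf i,i'}]_{trop}$ uniformly enough — in particular ruling out mass escaping to the (measure-zero) set of jump discontinuities of the floor in the limit — which should follow from absolute continuity of the limiting exponential laws, so that the boundary values $\{-\log\gamma \in h\Z\}$ carry vanishing probability.
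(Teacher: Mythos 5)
Your central step --- pushing the floor through $[R_{\bf i,i'}]_{trop}$ in the $h\to0$ limit --- does not go through. Leaving aside that the scaling is off (for $a=-h\log z$ one has $-\log\gamma_a\to\infty$ as $h\to0$, so $h\lfloor-\log\gamma_a/h\rfloor$ does not converge; the operation that actually produces a geometric in the limit is the \emph{integer} floor after Yor's normalization, $v\mapsto\lfloor -h\log v\rfloor$), even the corrected version of your scheme only yields
$$ \bigl(\lfloor{\bf e}_{\langle\beta_{{\bf i'},j}^\vee,\mu\rangle}\rfloor\bigr)_j \ \eqlaw\ \Bigl\lfloor [R_{\bf i,i'}]_{trop}\bigl(({\bf e}_{\langle\beta_{{\bf i},j}^\vee,\mu\rangle})_j\bigr)\Bigr\rfloor , $$
which is Proposition \ref{proposition:exponential_equality_in_law} with floors applied on both sides, whereas the claim has the floor \emph{inside} the transition map. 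These are genuinely different, because $\lfloor\cdot\rfloor$ does not commute with $[R_{\bf i,i'}]_{trop}$: in type $A_2$, with $(t_1,t_2,t_3)=(1.5,\,0.7,\,1.1)$ the third coordinate of $[R]_{trop}(t)$ is $t_1+t_2-\min(t_1,t_3)=1.1$, so its floor is $1$, while $[R]_{trop}(\lfloor t_1\rfloor,\lfloor t_2\rfloor,\lfloor t_3\rfloor)=[R]_{trop}(1,0,1)$ has third coordinate $1+0-1=0$. The remark that ``the floor is harmless because it moves each coordinate by at most $h$'' conflates the $h$-lattice rounding (which indeed disappears in the limit) with the unit-lattice rounding that actually produces the geometric variables (which does not).

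The paper's proof avoids this entirely by arguing on $\N^m$. Since $[R_{\bf i,i'}]_{trop}$ is a bijection of $\N^m$ (Theorem \ref{thm:lusztig_canonical_basis}), one can compare probability mass functions directly: with $K=({\bf G}_{{\bf z}^{\beta_{{\bf i},j}^\vee}})_j$ one has $\P(K=k)=\bigl(\prod_{\beta\in\Phi^+}(1-{\bf z}^{\beta^\vee})\bigr)\,{\bf z}^{\sum_j k_j\beta_{{\bf i},j}^\vee}$. The prefactor is independent of ${\bf i}$ because both enumerations $(\beta_{{\bf i},j})_j$ and $(\beta_{{\bf i'},j})_j$ run over all of $\Phi^+$, and the exponent $\sum_j k_j\beta_{{\bf i},j}^\vee$ is preserved under the change of Lusztig parametrization, so the pushed-forward pmf coincides with the product geometric pmf in the ${\bf i'}$-parameters. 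The fact $\lfloor{\bf e}_\mu\rfloor\eqlaw{\bf G}_{e^{-\mu}}$ is only used to recognize the limiting family, not to transport the identity through the map. An $h\to0$ argument cannot produce a lattice identity here without a commutation with $\lfloor\cdot\rfloor$ that fails, so your proof would need to be rebuilt along these discrete lines.
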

\begin{proof}
The claim is true because $\left[ R_{\bf i, i'} \right]_{trop}$ is one-to-one from $\N^{m}$ to $\N^{m}$ and thanks to the elementary fact that:
$$ \lfloor {\bf e}_{\mu} \rfloor \eqlaw {\bf G}_z$$
with $z = e^{-\mu}$.
\end{proof}

It is immediate that the same identities as in theorem \ref{thm:exponential_identities} hold between geometric random variables. As a corollary, this allows us to prove that a natural analogue of $\Gamma_\mu$ (or $N_\infty\left( W^{(\mu)}\right)$) exists at the level of Lusztig's canonical basis. 
\begin{corollary}
For any ${\bf i} \in R\left( w_ 0 \right)$ with $\left( \beta_1, \dots, \beta_m \right)$ the associated positive root enumeration and $\left( {\bf G}_{ {\bf z }^{ \beta_1^{\vee} } } , \dots, {\bf G}_{ {\bf z }^{ \beta_m^{\vee} } } \right)$ independent geometric random variables, the random variable defined by
$$ C_z\left( \infty \right) := {\bf \Lc}_{\bf i}\left( {\bf G}_{ {\bf z }^{ \beta_1^{\vee} } } , \dots, {\bf G}_{ {\bf z }^{ \beta_m^{\vee} } } \right)$$
has a distribution on Lusztig's canonical basis that is independent of the choice of ${\bf i}$.
\end{corollary}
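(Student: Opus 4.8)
The plan is to deduce this corollary directly from the two facts established just above: the geometric identities in law of Proposition \ref{proposition:geometric_equality_in_law}, together with the structural description of the canonical basis parametrizations in Theorem \ref{thm:lusztig_canonical_basis}. The key point is that the distribution of $C_z(\infty)$, viewed as a probability measure on the abstract set $\Lc$, is the pushforward of the product of geometric measures under ${\bf \Lc}_{\bf i}$; we must show this pushforward does not depend on ${\bf i}$.

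First I would fix two reduced words ${\bf i}, {\bf i'} \in R(w_0)$ with associated root enumerations $(\beta_{{\bf i},1}, \dots, \beta_{{\bf i},m})$ and $(\beta_{{\bf i'},1}, \dots, \beta_{{\bf i'},m})$, and let $\left( {\bf G}_{{\bf z}^{\beta_{{\bf i},j}^\vee}} \right)_{1 \le j \le m}$ and $\left( {\bf G}_{{\bf z}^{\beta_{{\bf i'},j}^\vee}} \right)_{1 \le j \le m}$ be the corresponding families of independent geometric variables. By Proposition \ref{proposition:geometric_equality_in_law} (with $\mu = -\log {\bf z} \in C$), these two families are related in law by
$$ \left( {\bf G}_{{\bf z}^{\beta_{{\bf i'},1}^\vee}}, \dots, {\bf G}_{{\bf z}^{\beta_{{\bf i'},m}^\vee}} \right) \eqlaw \left[ R_{\bf i, i'} \right]_{trop}\left( {\bf G}_{{\bf z}^{\beta_{{\bf i},1}^\vee}}, \dots, {\bf G}_{{\bf z}^{\beta_{{\bf i},m}^\vee}} \right). $$
Applying the (deterministic) map ${\bf \Lc}_{\bf i'} \colon \N^m \to \Lc$ to both sides preserves equality in law, so
$$ {\bf \Lc}_{\bf i'}\left( {\bf G}_{{\bf z}^{\beta_{{\bf i'},1}^\vee}}, \dots, {\bf G}_{{\bf z}^{\beta_{{\bf i'},m}^\vee}} \right) \eqlaw {\bf \Lc}_{\bf i'} \circ \left[ R_{\bf i, i'} \right]_{trop}\left( {\bf G}_{{\bf z}^{\beta_{{\bf i},1}^\vee}}, \dots, {\bf G}_{{\bf z}^{\beta_{{\bf i},m}^\vee}} \right). $$
Now I invoke Theorem \ref{thm:lusztig_canonical_basis}, which states precisely that $\left[ R_{\bf i, i'} \right]_{trop} = {\bf \Lc}_{\bf i'}^{-1} \circ {\bf \Lc}_{\bf i}$. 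Substituting, the right-hand side collapses to ${\bf \Lc}_{\bf i}\left( {\bf G}_{{\bf z}^{\beta_{{\bf i},1}^\vee}}, \dots, {\bf G}_{{\bf z}^{\beta_{{\bf i},m}^\vee}} \right)$, which is exactly $C_z(\infty)$ computed from ${\bf i}$. Hence the two choices give the same law on $\Lc$, and since ${\bf i}, {\bf i'}$ were arbitrary, the distribution is independent of the reduced word. Because $w_0$ has finitely many reduced words any two of which are joined by a sequence of braid moves (Matsumoto--Tits), it suffices in fact to check the above for ${\bf i'}$ obtained from ${\bf i}$ by a single braid move and chain the equalities; but the argument above already handles arbitrary pairs directly.

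I do not anticipate a genuine obstacle here: the statement is essentially a repackaging of Proposition \ref{proposition:geometric_equality_in_law} through the dictionary of Theorem \ref{thm:lusztig_canonical_basis}, and the only thing to be careful about is that all the maps involved (${\bf \Lc}_{\bf i}$, ${\bf \Lc}_{\bf i'}$, and $\left[ R_{\bf i, i'} \right]_{trop}$) are genuine bijections of $\N^m$ onto their targets, so that pushing a law forward and the compatibility relation ${\bf \Lc}_{\bf i'}^{-1} \circ {\bf \Lc}_{\bf i} = \left[ R_{\bf i, i'} \right]_{trop}$ can be applied without domain issues — this is guaranteed by the cited theorems and by the fact that $\left[ R_{\bf i, i'} \right]_{trop}$ maps $\N^m$ bijectively onto $\N^m$ as already used in the proof of Proposition \ref{proposition:geometric_equality_in_law}.
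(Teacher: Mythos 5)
Your proof is correct and follows exactly the route the paper intends: the corollary is an immediate consequence of Proposition \ref{proposition:geometric_equality_in_law} combined with the identity $\left[ R_{\bf i, i'} \right]_{trop} = {\bf \Lc}_{\bf i'}^{-1} \circ {\bf \Lc}_{\bf i}$ from Theorem \ref{thm:lusztig_canonical_basis}, and the paper itself leaves this deduction implicit. The care you take about the maps being bijections of $\N^m$ is the only nontrivial point and you handle it correctly.
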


\section{Proof of the conditional representation theorem}
\label{section:proof_conditional_thm}

Let us state a version of the Matsumoto and Yor relationship between Brownian motions with opposite drifts, which itself is based on many previous works related to exponential functionals of Brownian motion, including Dufresne's identity.

\begin{thm}[ Matsumoto-Yor \cite{bib:MY01}, theorem 2.2 ]
\label{thm:my_bm_with_opposite_drifts_1}
Let $B^{(\mu)}$ be a Brownian motion on a Euclidean vector space $V \approx \R^n$ with drift $\mu$ and $\beta$ a linear form on $V$ such that $\beta(\mu)>0$. Denote by $s_\beta$ the hyperplane reflection with respect to $\ker \beta$, by $\Q^y$ the measure of Brownian motion conditionally on its exponential functional being equal to $y>0$:
$$\Q^y := \P\left( \ \cdot \ \Big | \int_0^\infty e^{-\beta( B_s^{(\mu)}) }ds = y \right)$$
and:
$$ \beta^\vee = \frac{2 \beta}{ \langle \beta, \beta \rangle }$$
Then:
$$ \hat{B}_t^{ (s_\beta \mu)} = B_t^{(\mu)} + \log\left(1-\frac{1}{y}\int_0^t e^{-\beta( B_s^{(\mu)}) }ds \right) \beta^{\vee} $$
is a $\Q^y$-Brownian motion with drift $s_\beta \mu = \mu - \beta(\mu) \beta^\vee$.
\end{thm}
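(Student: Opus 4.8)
The plan is to reduce the $n$-dimensional statement to the one-dimensional case of Matsumoto and Yor by a change of variables that isolates the single direction $\beta^\vee$. First I would decompose $V$ orthogonally as $V = \R \beta^\vee \oplus \ker\beta$. Writing $B_t^{(\mu)} = u_t \frac{\beta^\vee}{|\beta^\vee|} + B_t^\perp$, the component $B_t^\perp$ in $\ker\beta$ is a Brownian motion with drift independent of the $\beta^\vee$-component, and the exponential functional $\int_0^\infty e^{-\beta(B_s^{(\mu)})}\,ds$ depends only on the process $\beta(B_t^{(\mu)})$, which is a one-dimensional Brownian motion with variance $\langle\beta,\beta\rangle$ and drift $\beta(\mu)>0$. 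Rescaling time and amplitude, $\beta(B_t^{(\mu)}) = 2 W_t^{(\nu)}$ for a standard one-dimensional Brownian motion $W$ with an appropriate drift $\nu$ proportional to $\beta(\mu)/\langle\beta,\beta\rangle$, so $\int_0^\infty e^{-\beta(B_s^{(\mu)})}\,ds$ is (a constant times) the classical Dufresne functional $\int_0^\infty e^{-2W_s^{(\nu)}}\,ds$.

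Next I would invoke the original Matsumoto--Yor result (their Theorem 2.2 in \cite{bib:MY01}, whose scalar content is recalled as Dufresne's identity, Proposition \ref{proposition:dufresne_identity}, and is stated in the form we need as Theorem \ref{thm:my_bm_with_opposite_drifts_2} in Section \ref{section:review_matsumoto_yor}): conditionally on the exponential functional being $y$, the transformed scalar process $W_t^{(\nu)} + \half\log\left(1 - \tfrac1y \int_0^t e^{-2W_s^{(\nu)}}\,ds\right)$ is a Brownian motion with the opposite drift $-\nu$. Translating this back through the identification $\beta(\cdot) = 2W_{\cdot}^{(\nu)}$ gives that $\beta(\hat B_t^{(s_\beta\mu)}) = \beta(B_t^{(\mu)}) + 2\log\left(1 - \tfrac1y \int_0^t e^{-\beta(B_s^{(\mu)})}\,ds\right)$ under $\Q^y$ is a Brownian motion with drift $-\beta(\mu)$, i.e. $s_\beta\mu$ has $\beta$-component $-\beta(\mu)$. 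Since the formula $\hat B_t^{(s_\beta\mu)} = B_t^{(\mu)} + \log(1-\tfrac1y\int_0^t e^{-\beta(B_s^{(\mu)})}\,ds)\,\beta^\vee$ only modifies the $\beta^\vee$-direction and $\beta(\beta^\vee) = 2$, it leaves the $\ker\beta$-component untouched; that component remains a Brownian motion with drift equal to the $\ker\beta$-part of $\mu$, which is exactly the $\ker\beta$-part of $s_\beta\mu = \mu - \beta(\mu)\beta^\vee$.

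The remaining point is to assemble these two pieces into the joint statement: under $\Q^y$ the $\beta^\vee$-component and the $\ker\beta$-component of $\hat B^{(s_\beta\mu)}$ are independent, and each has the asserted law. Here I would argue that the conditioning event $\{\int_0^\infty e^{-\beta(B_s^{(\mu)})}\,ds = y\}$ is measurable with respect to the $\beta^\vee$-component alone, so conditioning does not disturb the law of $B^\perp$ nor its independence from the $\beta^\vee$-component; the transform in the $\beta^\vee$-direction is a measurable functional of that same component, so independence is preserved. Combining the conditional law of the $\beta^\vee$-component from Matsumoto--Yor with the unchanged law of $B^\perp$ yields that $\hat B^{(s_\beta\mu)}$ is a $\Q^y$-Brownian motion with drift $s_\beta\mu$. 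I expect the main obstacle to be purely bookkeeping: tracking the normalization constants (the $\half\langle\beta,\beta\rangle$ factors, the relation between $\beta(\mu)$ and the one-dimensional drift $\nu$, and the factor $2$ from $\beta(\beta^\vee)=2$) so that the one-dimensional Matsumoto--Yor formula matches the stated multidimensional formula on the nose; the probabilistic content is entirely contained in the scalar case.
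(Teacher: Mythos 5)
Your reduction is correct in substance, but it is a genuinely different route from the paper's. You project onto $\R\beta^\vee\oplus\ker\beta$, observe that the conditioning event is measurable with respect to the $\beta^\vee$-component alone and the transform touches only that component, and then hand the $\beta^\vee$-direction to the known scalar Matsumoto--Yor theorem. The paper instead gives a self-contained proof: it computes the density of $N_\infty$ from Dufresne's identity (Corollary \ref{corollary:N_infty_law_1d}), forms the $\P$-martingale $q(B_t^{(\mu)},N_t,y)=\frac{d\Q^y}{d\P}\big|_{\Fc^B_t}$ via the decomposition $N_\infty = N_t + e^{-\beta(B_t^{(\mu)})}\tilde N_\infty$, computes the bracket $\langle\beta(B^{(\mu)}),\log q\rangle_t$, and applies Girsanov. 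The Girsanov correction $-\frac{\beta}{\|\beta\|^2}\langle\beta(B^{(\mu)}),\log q\rangle_t$ automatically lies along $\beta^\vee$, so the paper's computation is implicitly doing your decomposition but without needing to cite the scalar case; your argument is shorter if the scalar result is granted, the paper's argument re-derives it. Your bookkeeping of the constants ($\nu=\langle\beta^\vee,\mu\rangle$, time change $u=\frac{\|\beta\|^2}{4}s$, the $\beta(\beta^\vee)=2$) does close up as you expect.

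Two small corrections to the way you frame the citation. First, Theorem \ref{thm:my_bm_with_opposite_drifts_2} is \emph{not} the scalar form of what you need; in the paper's logical order it is the dual statement (Matsumoto--Yor's Theorem 2.1) and is proved \emph{from} Theorem \ref{thm:my_bm_with_opposite_drifts_1} together with the inversion lemma, so invoking it here would be circular. You must cite the external reference \cite{bib:MY01} (Theorem 2.2) directly for the scalar case, or else reprove it as the paper does. Second, Dufresne's identity (Proposition \ref{proposition:dufresne_identity}) is not the ``scalar content'' of Matsumoto--Yor's theorem: Dufresne gives only the marginal law of $\int_0^\infty e^{-2W_s^{(\mu)}}\,ds$, whereas Matsumoto--Yor's theorem is a statement about the law of the whole path under conditioning. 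Dufresne is an input to that theorem (it supplies the density used in the Radon--Nikodym derivative), not a restatement of it.
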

This theorem has a dual version that characterises the reciprocal transform of Brownian motion as a Brownian motion conditioned with respect to its exponential functional:
\begin{thm}[ Matsumoto-Yor \cite{bib:MY01}, theorem 2.1]
\label{thm:my_bm_with_opposite_drifts_2}
Let $W^{(s_\beta \mu)}$ be a Brownian motion on $V \approx \R^n$ with drift $s_\beta \mu$, $\beta(\mu)>0$ and:
$$ X_t = W_t^{(s_\beta \mu)} + \log\left(1+\frac{1}{y}\int_0^t e^{ -\beta( W_s^{(s_\beta \mu)} ) }ds\right) \beta^{\vee} $$
Then $X$ is a Brownian motion with drift $\mu$, $B^{(\mu)}$, conditioned to:
$$\int_0^\infty e^{- \beta(B_s^{(\mu)})} ds = y .$$
If moreover, we pick $y$ as random with $ y \stackrel{\mathcal{L}}{=} \frac{2}{\langle \beta, \beta\rangle \gamma_{ \langle \beta^\vee, \mu \rangle } }$ independent from $W$, then $X$ is a Brownian motion with drift $\mu$.
\end{thm}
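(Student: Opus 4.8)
The plan is to deduce Theorem~\ref{thm:my_bm_with_opposite_drifts_2} from Theorem~\ref{thm:my_bm_with_opposite_drifts_1} by checking that the two deterministic path transforms occurring in the two statements are inverse to one another, and then to obtain the randomized (``moreover'') assertion by disintegrating the law of $B^{(\mu)}$ over the level sets of its exponential functional, using a one-dimensional reduction to Dufresne's identity (Proposition~\ref{proposition:dufresne_identity}) to identify the mixing measure.

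I would first set up notation. Fix $y>0$, write $A_t^{w}:=\int_0^t e^{-\beta(w_s)}\,ds$ for a generic path $w$, let $\Psi$ be the transform of Theorem~\ref{thm:my_bm_with_opposite_drifts_2}, $\Psi(w)_t=w_t+\log\!\big(1+y^{-1}A_t^{w}\big)\beta^\vee$, and $\Phi$ the transform of Theorem~\ref{thm:my_bm_with_opposite_drifts_1}, $\Phi(b)_t=b_t+\log\!\big(1-y^{-1}A_t^{b}\big)\beta^\vee$. The key computation is the following. Since $\beta(\beta^\vee)=2$, applying $\beta$ gives $e^{-\beta(\Psi(w)_t)}=e^{-\beta(w_t)}\big(1+y^{-1}A_t^{w}\big)^{-2}$, hence $dA_t^{\Psi(w)}=\big(1+y^{-1}A_t^{w}\big)^{-2}\,dA_t^{w}$; the substitution $u=A_s^{w}$ (legitimate because $s\mapsto A_s^{w}$ is $C^1$ and strictly increasing) integrates this to $A_t^{\Psi(w)}=A_t^{w}/(1+y^{-1}A_t^{w})$, so that $1+y^{-1}A_t^{w}=(1-y^{-1}A_t^{\Psi(w)})^{-1}$ and therefore $\log(1+y^{-1}A_t^{w})=-\log(1-y^{-1}A_t^{\Psi(w)})$. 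Substituting back shows $w_t=\Psi(w)_t+\log(1-y^{-1}A_t^{\Psi(w)})\beta^\vee$, i.e. $\Phi(\Psi(w))=w$, and the same computation run with the roles exchanged gives $\Psi(\Phi(b))=b$. I would also record two almost sure facts that make this an honest equality of probability measures: if $w$ has drift $s_\beta\mu$ then $\beta(w_t)\to-\infty$ (because $\beta(s_\beta\mu)=-\beta(\mu)<0$), so $A_\infty^{w}=\infty$ and consequently $A_\infty^{\Psi(w)}=\lim_t A_t^{w}/(1+y^{-1}A_t^{w})=y$; symmetrically, under $\Q^y$ one has $A_\infty^{B^{(\mu)}}=y$ by definition, the transform $\Phi$ is then well defined, and its image automatically satisfies $A_\infty=\infty$, so no extra conditioning is needed in Theorem~\ref{thm:my_bm_with_opposite_drifts_1}.

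Granting this, the first assertion is immediate: Theorem~\ref{thm:my_bm_with_opposite_drifts_1} states that $\Phi$ pushes the conditional law $\Q^y$ forward to the law $\P^{(s_\beta\mu)}$ of Brownian motion with drift $s_\beta\mu$, and since $\Psi\circ\Phi=\mathrm{id}$ $\Q^y$-almost surely we get $\Psi_*\P^{(s_\beta\mu)}=\Psi_*\Phi_*\Q^y=(\Psi\circ\Phi)_*\Q^y=\Q^y$; this says exactly that $X=\Psi(W^{(s_\beta\mu)})$ has the law of a Brownian motion with drift $\mu$ conditioned on $\int_0^\infty e^{-\beta(B_s^{(\mu)})}\,ds=y$. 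For the randomized statement, disintegrate $\P^{(\mu)}=\int\Q^y\,\rho(dy)$, where $\rho$ is the law of $\int_0^\infty e^{-\beta(B_s^{(\mu)})}\,ds$ under $\P^{(\mu)}$ (with $\Q^y$ a regular conditional probability); drawing $y\sim\rho$ independently of $W$ then makes the mixture of the conditional laws of $X$ equal to $\P^{(\mu)}$, i.e. $X$ is an unconditioned Brownian motion with drift $\mu$. It remains to identify $\rho$ with the law of $\tfrac{2}{\langle\beta,\beta\rangle\,\gamma_{\langle\beta^\vee,\mu\rangle}}$: projecting on the $\beta$-direction, $\beta(B_s^{(\mu)})$ is a one-dimensional Brownian motion of variance parameter $\langle\beta,\beta\rangle$ with drift $\beta(\mu)$, so a Brownian scaling and time change reduce $\int_0^\infty e^{-\beta(B_s^{(\mu)})}\,ds$ to $\tfrac{4}{\langle\beta,\beta\rangle}\int_0^\infty e^{-2W_u^{(\langle\beta^\vee,\mu\rangle)}}\,du$, and Proposition~\ref{proposition:dufresne_identity} evaluates the latter as $\tfrac{1}{2\gamma_{\langle\beta^\vee,\mu\rangle}}$, giving $\rho$ as claimed.

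The main obstacle is the inversion computation of the second paragraph together with its accompanying almost sure statements: one must verify carefully that the change of variables $u=A_s^{w}$ is valid, that the algebraic identity $A_t^{\Psi(w)}=A_t^{w}/(1+y^{-1}A_t^{w})$ holds for all $t$, and that $A_\infty^{w}=\infty$ resp. $A_\infty^{B^{(\mu)}}=y$ hold almost surely under the relevant measures, since these limiting facts are precisely what upgrade the pointwise inverse relation into the measure identity $\Psi_*\P^{(s_\beta\mu)}=\Q^y$. The remaining ingredients --- the disintegration over level sets and the one-dimensional reduction of the exponential functional to Dufresne's identity --- are routine. (An alternative, closer to the original derivation, would bypass Theorem~\ref{thm:my_bm_with_opposite_drifts_1} and instead compute directly the SDE solved by $\beta(X)$, recognizing its generator as a Doob $h$-transform of the Brownian generator; given Theorem~\ref{thm:my_bm_with_opposite_drifts_1}, the duality route above is shorter.)
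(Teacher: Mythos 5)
Your proof is correct and takes essentially the same route as the paper: it deduces the conditional statement from Theorem~\ref{thm:my_bm_with_opposite_drifts_1} together with the fact that the two path transforms are deterministic inverses of one another (this is the paper's Lemma~\ref{lemma:inversion_lemma}, including the boundary behaviour $A_\infty^{B^{(\mu)}}=y$ under $\Q^y$ resp.\ $A_\infty^{w}=\infty$ under $\P^{(s_\beta\mu)}$), and it obtains the randomized statement by disintegration after identifying the mixing law of the exponential functional via the one-dimensional Brownian-scaling reduction to Dufresne's identity (the paper's Corollary~\ref{corollary:N_infty_law_1d}). The only cosmetic difference is that you phrase the first step as a pushforward identity $\Psi_*\P^{(s_\beta\mu)}=\Q^y$, whereas the paper writes it as a chain of equalities in law between processes.
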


Notice that compared to the original formulation, we used a multidimensional setting. The change of sign is simply replaced by a hyperplane reflection. Let us now focus on proving the conditional representation theorem. In the case of $g=x_\alpha(\xi)$ with $\xi>0$, the group-theoretic path transform $T_g$ has a simple expression (\cite{bib:chh14a} Properties 5.19):
$$ \left(T_g X \right)_t = X_t + \log\left( 1 + \xi \int_0^t e^{-\alpha( X_s)} ds \right) \alpha^{\vee} $$
for any continuous path $X$. The result of Hiroyuki Matsumoto and Yor can be reformulated as:

\begin{thm}[$SL_2$ conditional representation]
\label{thm:sl2_conditional_representation}
If $g=x_\alpha(\xi)$, $\xi>0$, $W^{(s_{\alpha} \mu)}$ a Brownian motion on $\afrak$ with drift $s_{\alpha} \mu$ such that $\alpha(\mu)>0$ then $X := T_g(W^{(s_{\alpha} \mu)})$ is a Brownian motion with drift $\mu$ conditioned to $\int_0^\infty e^{ -\alpha( X_s ) }ds = \frac{1}{\xi}$.

If moreover we pick $\xi$ as random with $ \xi \stackrel{\mathcal{L}}{=} \frac{\langle \alpha, \alpha \rangle}{2} \gamma_{\langle \alpha^{\vee}, \mu \rangle }$ independent from $W$ then $X$ is a Brownian motion with drift $\mu$.
\end{thm}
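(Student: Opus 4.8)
The plan is to recognize that Theorem~\ref{thm:sl2_conditional_representation} is nothing more than a repackaging of the Matsumoto--Yor intertwining/conditioning result (stated later as Theorem~\ref{thm:my_bm_with_opposite_drifts_2}) together with Dufresne's identity, once one unwinds the scalar reductions appropriate to $A_1$. First I would fix the rank-one normalizations: in $A_1$ we have $\afrak \approx \R$, the single root $\alpha$ satisfies $\langle\alpha,\alpha\rangle = 4$ with $\alpha^\vee = \alpha/2$, so $\alpha(x) = 2x$ for $x\in\afrak\approx\R$ under the Killing identification, and $\langle\alpha^\vee,\mu\rangle = \mu$ when $\mu$ is written in the $\alpha^\vee$-coordinate. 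With these identifications the path transform formula quoted just above,
\[
(T_g X)_t = X_t + \log\!\Bigl(1 + \xi\int_0^t e^{-\alpha(X_s)}\,ds\Bigr)\alpha^\vee,
\]
becomes, in the scalar coordinate, $Y_t = X_t + \tfrac12\log\bigl(1 + \xi\int_0^t e^{-2X_s}\,ds\bigr)$, which is precisely (up to the cosmetic constant $\xi$ versus the normalization in Section~\ref{section:review_matsumoto_yor}) the deterministic functional appearing in Matsumoto--Yor.

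Next I would invoke Theorem~\ref{thm:my_bm_with_opposite_drifts_2}: if $W^{(s_\alpha\mu)}$ is Brownian motion with the opposite drift $s_\alpha\mu = -\mu$ (in the scalar coordinate, drift $-\mu/2$ or $-\mu$ depending on normalization, which I would pin down to match the stated Matsumoto--Yor convention), then applying this functional produces a process $X$ which is Brownian motion with drift $\mu$ conditioned on the value of the exponential functional $\int_0^\infty e^{-\alpha(X_s)}\,ds$. The Matsumoto--Yor theorem gives the conditional law explicitly; the point is that the functional $\int_0^\infty e^{-\alpha(W^{(-\mu)}_s)}\,ds$ equals $1/\xi$ on the image of $T_g$, because the additive correction $\log(1+\xi\int_0^t e^{-\alpha(X_s)}ds)\alpha^\vee$ saturates exactly when $\int_0^\infty e^{-\alpha(X_s)}ds = 1/\xi$; I would check this matching of the conditioning event carefully, as it is the one genuinely content-bearing identification. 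This establishes the first assertion.

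For the second assertion, I would randomize $\xi$. By Theorem~\ref{thm:my_bm_with_opposite_drifts_2} (equivalently, by disintegration), the law of Brownian motion with drift $\mu$ is recovered from the conditioned laws by integrating the value of $\int_0^\infty e^{-\alpha(X_s)}ds$ against its true distribution. By Dufresne's identity, recalled as Proposition~\ref{proposition:dufresne_identity}, that distribution is the law of $\tfrac{\langle\alpha,\alpha\rangle}{2}\cdot\tfrac{1}{\gamma_{\langle\alpha^\vee,\mu\rangle}}$ in these normalizations, hence $\xi = \bigl(\int_0^\infty e^{-\alpha(X_s)}ds\bigr)^{-1}$ has the law $\tfrac{\langle\alpha,\alpha\rangle}{2}\gamma_{\langle\alpha^\vee,\mu\rangle}$ claimed; feeding this $\xi$, independent of $W$, into $T_g(W^{(s_\alpha\mu)})$ therefore yields an unconditioned Brownian motion with drift $\mu$. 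The main obstacle I anticipate is purely bookkeeping: reconciling the normalization constants (the factor $\tfrac{\langle\alpha,\alpha\rangle}{2}$, the choice $\alpha=2$ versus $\alpha^\vee=1$, and the exact drift and time-scale conventions in the Matsumoto--Yor statement of Section~\ref{section:review_matsumoto_yor}) so that the functional $T_{x_\alpha(\xi)}$, the conditioning event $\int_0^\infty e^{-\alpha(X_s)}ds = 1/\xi$, and Dufresne's constant all line up consistently; there is no analytic difficulty beyond what Matsumoto--Yor and Dufresne already supply.
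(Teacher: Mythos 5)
Your approach matches the paper's exactly: Theorem~\ref{thm:sl2_conditional_representation} is read off by substituting $\beta=\alpha$ and $y=1/\xi$ into Theorem~\ref{thm:my_bm_with_opposite_drifts_2}, using the explicit formula for $T_{x_\alpha(\xi)}$, with the randomization of $\xi$ being the second half of that same theorem (whose constant is pinned down by Dufresne's identity, Corollary~\ref{corollary:N_infty_law_1d}). One bookkeeping slip you already flagged as needing care: you wrote that $\int_0^\infty e^{-\alpha(X_s)}\,ds$ has the law of $\tfrac{\langle\alpha,\alpha\rangle}{2}\cdot\tfrac{1}{\gamma_{\langle\alpha^\vee,\mu\rangle}}$, but it is $\tfrac{2}{\langle\alpha,\alpha\rangle}\cdot\tfrac{1}{\gamma_{\langle\alpha^\vee,\mu\rangle}}$; inverting that corrected law is what gives $\xi \eqlaw \tfrac{\langle\alpha,\alpha\rangle}{2}\gamma_{\langle\alpha^\vee,\mu\rangle}$ as claimed.
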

\begin{rmk}
Recall that in the case of $SL_2$, $\Theta\left( \begin{pmatrix} 1 & \xi\\ 0 & 1 \end{pmatrix} \right) = \begin{pmatrix} 1 & 0\\ \frac{1}{\xi} & 1 \end{pmatrix}$. Hence, theorem \ref{thm:conditional_representation} becomes exactly theorem \ref{thm:sl2_conditional_representation} in the particular case of the group $SL_2$.
\end{rmk}

It is very surprising and impressive that Matsumoto and Yor fully worked out the $SL_2$ case without starting from any group-theoretic considerations. We are now ready to prove the conditional representation theorem.

\begin{proof}[Proof of theorem \ref{thm:conditional_representation} ]
We can of course take $x_0 = 0$. Let $X = T_{e^{\theta} g e^{-\theta}}( W^{(w_0 \mu)})$, and thanks to the composition property 5.19 in \cite{bib:chh14a} we have:
\begin{eqnarray*}
X & = & T_{e^{\theta} g e^{-\theta}} W^{(w_0 \mu)}\\
& = & T_{ x_{i_1}\left( \frac{\langle \alpha_{i_1}, \alpha_{i_1} \rangle}{2} t_1 \right) \dots 
          x_{i_m}\left( \frac{\langle \alpha_{i_m}, \alpha_{i_m} \rangle}{2} t_m \right) } W^{(w_0 \mu)}\\
& = & T_{ x_{i_1}\left( \frac{\langle \alpha_{i_1}, \alpha_{i_1} \rangle}{2} t_1 \right)} \circ \dots \circ 
      T_{ x_{i_m}\left( \frac{\langle \alpha_{i_m}, \alpha_{i_m} \rangle}{2} t_m \right) } W^{(w_0 \mu)}
\end{eqnarray*}
We apply inductively theorem \ref{thm:sl2_conditional_representation} with Lusztig parameters taken to follow the right laws, in order to get successive Brownian motions.

The end of proof follows from the deterministic inversion lemma for Lusztig parameters in \cite{bib:chh14a} theorem 8.16, which we know to be also valid for an infinite time horizon (\cite{bib:chh14a} subsection 8.5): 
$$ N_\infty\left( T_{e^{\theta} g e^{-\theta}}\left( W^{(w_0 \mu)} \right) \right) = \Theta\left( g \right)$$
Concerning notations, a little precision needs to be made at this point. The left-invariant flow $\left( B_t(.); t \geq 0 \right)$ considered in this paper is a conjugation by $e^{\theta}$ on the one in \cite{bib:chh14a} equation (5.7). Hence the little correction in the above formula. In the end:
$$ N_\infty\left( X^{0, (\mu)} \right) = \Theta\left( g \right)$$
concluding the proof.
\end{proof}

\section{Review of Dufresne's identity and the relationship proven by Matsumoto and Yor}
\label{section:review_matsumoto_yor}
This section is mainly expository in nature and reviews the probabilistic results we needed, namely Dufresne's identity and the proofs of theorems \ref{thm:my_bm_with_opposite_drifts_1} \ref{thm:my_bm_with_opposite_drifts_2}.

\subsection{Exponential functionals of Brownian motion}
An important fact is Dufresne's identity in law:
\begin{proposition}[ Dufresne \cite{bib:Dufresne90} ]
 \label{proposition:dufresne_identity}
 If $W^{(\mu)}$ is a one dimensional Brownian motion with drift $\mu>0$, then:
$$ \int_0^\infty e^{-2 W_s^{(\mu)} }ds \eqlaw \frac{1}{2\gamma_\mu} $$
\end{proposition}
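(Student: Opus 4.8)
The plan is to prove Dufresne's identity by the classical method of computing moments, or equivalently by the time-reversal / additive functional argument that Dufresne originally used; since this is a well-known identity I would present the cleanest self-contained route. First I would set $A_\infty := \int_0^\infty e^{-2W_s^{(\mu)}}\,ds$ and observe that the integral converges almost surely because the strong law of large numbers gives $W_s^{(\mu)}/s \to \mu > 0$, so $e^{-2W_s^{(\mu)}}$ decays exponentially. Then I would identify the law of $A_\infty$ by a Markovian argument: the process $R_t := e^{-W_t^{(\mu)}}$ together with its time-integral, or more directly the diffusion $Y_t$ obtained by the Lamperti-type relation, allows one to see $2A_\infty$ as the hitting time / stationary quantity of a Bessel-like process. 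Concretely, I would use the fact (Dufresne, or Yor's treatment) that $\int_0^\infty e^{-2W_s^{(\mu)}}\,ds$ is, after a deterministic time change, distributed as $\int_0^\infty e^{-2s - 2B_s}\,ds$ type object whose inverse has a gamma law.

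Alternatively, the shortest rigorous route is via the stochastic differential equation: let $Z := 2A_\infty = 2\int_0^\infty e^{-2W^{(\mu)}_s}\,ds$. Using Itô's formula on $e^{-2W^{(\mu)}_t}$ and the additivity of the exponential functional under the shift $\theta_t$, one gets the well-known recursive distributional identity $A_\infty \stackrel{\Lc}{=} e^{-2W^{(\mu)}_t}\big(A'_\infty + \int_0^t e^{2(W^{(\mu)}_s - W^{(\mu)}_t)}\,ds\big)$; letting $t\to\infty$ and exploiting the independence of increments, one shows the Laplace transform or the Mellin transform $\E[(2A_\infty)^{-s}]$ satisfies a functional equation solved uniquely by $\Gamma(\mu+s)/\Gamma(\mu)$, i.e.\ the Mellin transform of $1/(2\gamma_\mu)$. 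Computing $\E[A_\infty^n]$ directly by iterated integration against the Brownian exponential moments $\E[e^{-2(W^{(\mu)}_{s_1}+\dots+W^{(\mu)}_{s_n})}]$ over the simplex $s_1 \le \dots \le s_n$ gives $\E[(2A_\infty)^n] = \frac{(-1)^n}{(\mu-1)(\mu-2)\cdots(\mu-n)}$ for $\mu>n$, matching $\E[\gamma_\mu^{-n}]$; since the inverse-gamma law is determined by the analytic continuation of these moments (its Mellin transform is entire and of the right growth), the identity follows.

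The main obstacle, such as it is, is the justification of the moment computation and the uniqueness step: one must check that the negative moments $\E[(2A_\infty)^{-s}]$ for $s$ in a right half-line determine the law — this is standard since $1/(2\gamma_\mu)$ is moment-determinate in this range (the Mellin transform is analytic in a strip and the distribution is characterized there), but it needs a clean citation or a short argument via the functional equation for the Mellin transform rather than a brute moment-problem invocation. Since the paper is content to cite \cite{bib:Dufresne90} and uses this only as the base case in Theorem \ref{thm:sl2_conditional_representation}, I expect the proof here to be essentially a pointer to Dufresne's original derivation (or to Yor's via the Matsumoto--Yor relationship reviewed in this same section), with perhaps a one-line reminder that $\int_0^\infty e^{-2W^{(\mu)}_s}\,ds$ converges and that its inverse is $2\gamma_\mu$ by the Mellin-transform / functional-equation argument sketched above.
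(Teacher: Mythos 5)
Your proposal sketches several well-known routes to Dufresne's identity (moments/Mellin transform, the recursive distributional equation from the Markov shift, or Lamperti/Bessel connections), and any of them can be made rigorous, but the paper actually gives a complete self-contained proof by a route you did not quite anticipate: it is not a pointer to Dufresne or to the Matsumoto--Yor relationship. The paper's argument is a stationary-distribution computation. It first uses time inversion to observe that, for each fixed $t$, $\int_0^t e^{-2W_s^{(\mu)}}\,ds$ is equal in law to $e^{-2W_t^{(\mu)}}\int_0^t e^{2W_s^{(\mu)}}\,ds$, sets $e^{-Z_t}$ equal to the latter, and shows by It\^o that $Z$ is a diffusion with generator $\Lc = 2\partial_z^2 + (2\mu - e^z)\partial_z$. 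Since $e^{-Z_t}$ has the same one-dimensional distribution as the increasing process $\int_0^t e^{-2W_s^{(\mu)}}\,ds$, which converges a.s., $Z_t$ converges in law and its limit is the unique invariant measure, which is therefore the law of $-\log\int_0^\infty e^{-2W_s^{(\mu)}}\,ds$. The proof is then reduced to one line of calculus: checking that the density of $\log 2\gamma_\mu$ is annihilated by $\Lc^*$.

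Comparing the two approaches: the paper's route is arguably the shortest fully rigorous one, since the only analytic point (uniqueness of the invariant measure of a one-dimensional positive-recurrent diffusion) is elementary, whereas your moment/Mellin-transform route requires the moment-determinacy or uniqueness step you yourself flag as the main obstacle. Your route is more robust to generalization (e.g., to Mellin transforms of more general exponential functionals) and does not require identifying a diffusion. One small slip in your sketch: the moment formula should read $\E[(2A_\infty)^n] = \E[\gamma_\mu^{-n}] = \Gamma(\mu-n)/\Gamma(\mu) = \frac{1}{(\mu-1)(\mu-2)\cdots(\mu-n)}$ for $\mu > n$, with no factor of $(-1)^n$.
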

\begin{proof}[Quick proof]
By time inversion, for any fixed $t>0$, the random variable $\int_0^t e^{-2 W_s^{(\mu)} }ds$ has the same law as $e^{-2 W_t^{(\mu)}} \int_0^t e^{ 2 W_s^{(\mu)} } ds$. Let $\left(Z_t; t>0\right)$ be given by:
$$ e^{-Z_t} := e^{-2 W_t^{(\mu)}} \int_0^t e^{ 2 W_s^{(\mu)} } ds$$
And, by Ito's lemma, $Z_t$ can be easily checked to be a diffusion process since it satisfies for $t>0$ the SDE:
$$ d Z_t = 2 dW^{(\mu)}_t - e^{Z_t}dt$$
Hence it has as infinitesimal generator:
$$ \Lc = 2 \partial_z^2 + (2 \mu - e^z) \partial_z $$
The sequence $Z_t$ converges in law to a unique invariant measure because $e^{-Z_t}$ has the same distribution as $\int_0^t e^{-2 W_s^{(\mu)} }ds$, which converges almost surely. This invariant measure will be the law of $\int_0^\infty e^{-2 W_s^{(\mu)} }ds$. Therefore, all we need to do is to prove that the distribution of $\log 2 \gamma_\mu$ is an invariant measure for $Z_t$. This is done easily by checking that the adjoint of $\Lc$ annihilates the density $p(z)$ of the random variable $\log 2\gamma_\mu$. We have:
$$ p(z) = \frac{1}{\Gamma(\mu) 2^\mu} \exp\left( \mu z - \half e^z \right)$$
Applying $\Lc^*$, the adjoint of $\Lc$:
$$ \Lc^* = 2 \partial_z^2 - \partial_z ( 2 \mu - e^z) $$
We get:
\begin{align*}
 \Lc^* p(z) & = 2 \partial_z^2 p(z) - 2 \partial_z \left( (\mu - \half e^z) p(z) \right)\\
& = 2 \partial_z^2 p(z) - 2 \partial_z^2 p(z)\\
& = 0
\end{align*}
\end{proof}

\subsection{Proofs of the relationship between Brownian motions of opposing drifts}
Let us state a version of the Matsumoto and Yor relationship between Brownian motions with opposite drifts $\cite{bib:MY01}$, which itself is based on many previous works related to exponential functionals of Brownian motion, including Dufresne's identity.

First let us start by proving theorem \ref{thm:my_bm_with_opposite_drifts_1} using known results on exponential functionals of Brownian motion. Now, let $B_t^{(\mu)}$ be an $n$-dimensional Brownian motion with drift $\mu$, $\Fc^B_t$ its natural filtration, $\beta$ a linear form such that $\beta(\mu)>0$ and:
$$N_t := \int_0^t \exp(-\beta( B_s^{(\mu)}) )ds $$
$$N_{\infty} = \lim_{t \rightarrow \infty} N_t $$

The law of $N_\infty$ comes as a simple corollary of Dufresne's identity:
\begin{corollary}
\label{corollary:N_infty_law_1d}
One has the identity in law:
$$ N_\infty :=  \int_0^\infty \exp\left(-\beta( B_s^{(\mu)}) \right)ds \stackrel{\mathcal{L}}{=} \frac{2}{||\beta||^2 \gamma_{\langle \beta^\vee, \mu \rangle } }$$
Therefore, the density is:
$$ \P\left( N_\infty \in dn \right) = \frac{1}{\Gamma(\langle \beta^\vee, \mu \rangle)} n^{-\langle \beta^\vee, \mu \rangle} \exp\left( -\frac{2}{||\beta||^2 n} \right) \left(\frac{2}{||\beta||^2 }\right)^{\langle \beta^\vee, \mu \rangle} \frac{dn}{n} \ .$$
\end{corollary}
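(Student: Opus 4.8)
The plan is to deduce Corollary~\ref{corollary:N_infty_law_1d} directly from Dufresne's identity (Proposition~\ref{proposition:dufresne_identity}) by a one-dimensional change of variables. The point is that although $B^{(\mu)}$ lives on $V \approx \R^n$, the integrand $e^{-\beta(B_s^{(\mu)})}$ depends only on the real-valued process $t \mapsto \beta(B_t^{(\mu)})$. So first I would observe that $\beta(B_t^{(\mu)})$ is a Gaussian process: it is a continuous martingale plus a linear drift $\beta(\mu)\,t$, with quadratic variation $\langle \beta,\beta\rangle\, t = \|\beta\|^2 t$. Hence by the time-scaling property of Brownian motion there is a one-dimensional standard Brownian motion $W$ with $\beta(B_t^{(\mu)}) = \|\beta\| W_{t} + \beta(\mu) t$ in law (jointly as processes); equivalently, setting $c := \|\beta\|$ and reparametrizing time by $u = c^2 t$, we get $\beta(B_t^{(\mu)}) = c\bigl(W'_{u} + \tfrac{\beta(\mu)}{c^2} u\bigr)$ for a standard BM $W'$.

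Next I would push this through the integral. Writing $m := \tfrac{\beta(\mu)}{\|\beta\|^2}$ and noting $\|\beta\| m = \tfrac{\beta(\mu)}{\|\beta\|} $ while $\langle \beta^\vee,\mu\rangle = \tfrac{2\beta(\mu)}{\|\beta\|^2} = 2m$, the substitution $u = \|\beta\|^2 s$ gives
\begin{align*}
N_\infty = \int_0^\infty e^{-\beta(B_s^{(\mu)})}\,ds
&\eqlaw \int_0^\infty \exp\Bigl(-\|\beta\|\bigl(W'_{\|\beta\|^2 s} + m\,\|\beta\|^2 s\bigr)\Bigr)\,ds \\
&= \frac{1}{\|\beta\|^2}\int_0^\infty \exp\Bigl(-\|\beta\|\,W'_{u} - \|\beta\|^2 m\, u\Bigr)\,du \ .
\end{align*}
This is not yet in the exact Dufresne form $\int_0^\infty e^{-2W^{(\nu)}_u}\,du$ because the coefficient in front of $W'$ is $\|\beta\|$ rather than $2$; a second Brownian scaling $u \mapsto (2/\|\beta\|)^2 u$ turns $\|\beta\| W'_u$ into $2 \widetilde W_{(2/\|\beta\|)^2 u}$, and a short bookkeeping computation shows the drift becomes exactly $\nu := m = \tfrac{\langle \beta^\vee,\mu\rangle}{2}$, yielding
$$ N_\infty \eqlaw \frac{2}{\|\beta\|^2}\int_0^\infty e^{-2 \widetilde W^{(\nu)}_u}\,du \ .$$
Applying Proposition~\ref{proposition:dufresne_identity} with drift $\nu = \langle\beta^\vee,\mu\rangle/2 > 0$ gives $\int_0^\infty e^{-2\widetilde W^{(\nu)}_u}\,du \eqlaw \tfrac{1}{2\gamma_\nu}$, hence $N_\infty \eqlaw \tfrac{2}{\|\beta\|^2}\cdot\tfrac{1}{2\gamma_\nu}\cdot\tfrac{1}{1}$; but one must be careful that Dufresne's statement as recalled pins the drift parameter of the gamma to $\mu$ when the exponent is $2W^{(\mu)}$, and here the gamma parameter is the drift $\nu$. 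Since $\langle\beta^\vee,\mu\rangle = 2\nu$, I would instead absorb the factor of $2$ into the exponent from the start (write $\|\beta\| = 2\cdot\tfrac{\|\beta\|}{2}$ and scale so the drift lands at $\langle\beta^\vee,\mu\rangle$), so that Dufresne directly delivers $\gamma_{\langle\beta^\vee,\mu\rangle}$ and $N_\infty \eqlaw \tfrac{2}{\|\beta\|^2 \gamma_{\langle\beta^\vee,\mu\rangle}}$.

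Finally, the density formula follows from a routine change of variables: if $\gamma := \gamma_{\langle\beta^\vee,\mu\rangle}$ has density $\tfrac{1}{\Gamma(\langle\beta^\vee,\mu\rangle)} g^{\langle\beta^\vee,\mu\rangle - 1} e^{-g}\,dg$ on $\R_+$, then $N_\infty = \tfrac{2}{\|\beta\|^2}\,\tfrac{1}{\gamma}$, i.e. $g = \tfrac{2}{\|\beta\|^2 n}$, $|dg| = \tfrac{2}{\|\beta\|^2 n^2}\,dn$, and substituting gives exactly the claimed expression
$$ \P(N_\infty \in dn) = \frac{1}{\Gamma(\langle\beta^\vee,\mu\rangle)}\, n^{-\langle\beta^\vee,\mu\rangle}\, \exp\Bigl(-\frac{2}{\|\beta\|^2 n}\Bigr)\Bigl(\frac{2}{\|\beta\|^2}\Bigr)^{\langle\beta^\vee,\mu\rangle}\frac{dn}{n} \ .$$
I expect the only real subtlety to be the bookkeeping of the two Brownian rescalings and making sure the factor of $2$ in Dufresne's exponent, the factor $\tfrac{2}{\|\beta\|^2}$ out front, and the identification $\langle\beta^\vee,\mu\rangle$ versus $\beta(\mu)/\|\beta\|^2$ all line up; there is no conceptual obstacle, since everything reduces to the scalar process $\beta(B^{(\mu)})$ and the one-dimensional Dufresne identity.
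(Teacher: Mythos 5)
Your plan is the same as the paper's: reduce to the scalar process $\beta(B^{(\mu)}_t)$, write it as $\|\beta\|W_t + \beta(\mu)t$ for a standard one-dimensional Brownian motion $W$ (your quadratic-variation observation), apply Brownian scaling to land on Dufresne's normalization, and change variables for the density. The paper does this in a single scaling, choosing $c = \|\beta\|^2/4$, which sends the exponent directly to $-2W^{(\langle\beta^\vee,\mu\rangle)}_u$ with prefactor $4/\|\beta\|^2$. Your first displayed rescaling, however, is inconsistent: substituting $u = \|\beta\|^2 s$ and using $W_s \eqlaw \|\beta\|^{-1} W'_{\|\beta\|^2 s}$, the $\|\beta\|$ in front of $W_s$ cancels and the exponent becomes $-W'_u - m\,u$ (coefficient $1$ on $W'_u$, drift $m$), not $-\|\beta\|W'_u - \|\beta\|^2 m\,u$ as you wrote. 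Carried through, the correct second rescaling $u \mapsto u/4$ then gives prefactor $4/\|\beta\|^2$ and drift $\langle\beta^\vee,\mu\rangle$ --- and Dufresne then delivers $\frac{2}{\|\beta\|^2\gamma_{\langle\beta^\vee,\mu\rangle}}$ as desired --- whereas the form $\frac{2}{\|\beta\|^2}\int_0^\infty e^{-2\widetilde W^{(\nu)}_u}du$ with $\nu = \langle\beta^\vee,\mu\rangle/2$ that you display would instead give $\frac{1}{\|\beta\|^2\gamma_{\langle\beta^\vee,\mu\rangle/2}}$, which is wrong. You flagged the bookkeeping as delicate and stated the correct final identity anyway, but the intermediate displays do not actually support it; doing the rescaling in one shot with $c = \|\beta\|^2/4$, as the paper does, avoids the pitfall. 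The density derivation at the end is fine.
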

\begin{proof}
Define the real Brownian motion $W$ by $\beta(B_t) = ||\beta|| W_t$ for $t \geq 0$. Then:
\begin{align*}
N_{\infty}
& = \int_0^\infty e^{- \beta( B_t^{(\mu)} ) }dt\\
& = \int_0^\infty e^{- ||\beta|| W_t - \beta(\mu)t }dt\\
& \stackrel{\mathcal{L}}{=} \int_0^\infty e^{- \frac{||\beta||}{\sqrt{c}} W_{tc} - \frac{\beta(\mu)tc}{c} }dt \textrm{ using Brownian scaling for } c>0\\
& = \frac{1}{c} \int_0^\infty e^{- \frac{||\beta||}{\sqrt{c}} W_{u} - \frac{\beta(\mu)u}{c} }du \textrm{ using change of variable } u = tc\\
& = \frac{4}{||\beta||^2} \int_0^\infty e^{- 2 W_{u} - 4\frac{\beta(\mu)u}{||\beta||^2} }du \textrm{ by choosing } c = \frac{ ||\beta||^2}{4}\\
& = \frac{4}{||\beta||^2} \int_0^\infty e^{- 2 (W_{u} + \langle \beta^\vee, \mu \rangle u ) }du \ .
\end{align*}
The result holds using Dufresne's identity in law. As for the density, for all $f\geq0$ bounded measurable function, and while writing $\nu = \langle \beta^\vee, \mu \rangle$, we have:
\begin{align*}
  & \E\left( f(N_\infty) \right)\\
= & \int_0^\infty f( \frac{1}{ ||\beta||^2 t/2}) \frac{ e^{-t} t^{\nu} }{\Gamma(\nu)} \frac{dt}{t}\\
= & \frac{1}{\Gamma(\nu)} \int_0^\infty f(n) \exp\left( -\frac{2}{||\beta||^2 n} \right) \left(\frac{2}{||\beta||^2 n} \right)^{\nu} \frac{2}{||\beta||^2} \frac{dn}{n} \textrm{ by letting } n = \frac{2}{||\beta||^2 t}\\
= & \frac{1}{\Gamma(\nu)} \int_0^\infty f(n) \exp\left( -\frac{2}{||\beta||^2 n} \right) \left(\frac{2}{||\beta||^2 } \right)^{\nu} n^{-\nu} \frac{dn}{n} \ .
\end{align*}
\end{proof}

\subsubsection*{Initial enlargement of the filtration $\Fc^B$ using the random variable $N_\infty$}
Now, we give a proof of theorem \ref{thm:my_bm_with_opposite_drifts_1} following Matsumoto and Yor's original presentation. In order to compute the law of $N_\infty$ conditionally on $\Fc^B_t$, the following decomposition is essential:
\begin{align}
\label{eqn:N_infty_decomposition_1d}
N_{\infty} = \int_0^t e^{- \beta( B_s^{(\mu)} ) }ds + e^{- \beta( B_t^{(\mu)} ) } \tilde{N}_\infty \ ,
\end{align}
with $\tilde{N}_\infty$ is a copy of $N_\infty$ independent from $\Fc^B_t$. Indeed:
\begin{align*}
N_{\infty}
& = \int_0^\infty e^{- \beta( B_s^{(\mu)} ) }ds\\
& = \int_0^t e^{- \beta( B_s^{(\mu)} ) }ds + e^{- \beta( B_t^{(\mu)} ) } \int_t^\infty e^{- \beta( B_s^{(\mu)} - B_t^{(\mu)}) }ds\\
& = \int_0^t e^{- \beta( B_s^{(\mu)} ) }ds + e^{- \beta( B_t^{(\mu)} ) } \tilde{N}_\infty \ .
\end{align*}

For readability purposes, and because it is not necessary to invoke general filtration enlargement theorems, we will give a complete proof using the usual tools. Indeed, as proved before the law of $N_\infty$ has a (smooth) density $\frac{d\P( N_\infty \leq y)}{dy}$ with respect to the Lebesgue measure, making possible the following computations.

Let $\Q^y = \P\left( \cdot | N_\infty = y \right)$ be a regular version of the conditional probability, $f \geq 0$ bounded measurable function, and $A \in \Fc^B_t$. We have:
\begin{align*}
  \int dy f(y) \Q^y(A) \frac{d\P}{dy}\left( N_\infty \leq y \right)
= & \E\left( \Q^{N_\infty}(A) f\left(N_\infty\right) \right)\\
= & \E\left( \mathds{1}_A f\left(N_\infty\right) \right)\\
= & \E\left( \mathds{1}_A \E\left( f\left(N_\infty\right) | \Fc^B_t\right) \right)\\
= & \E\left( \mathds{1}_A \int f(y) d\P(N_\infty \in dy | \Fc^B_t) \right)\\
= & \int dy f(y) \E\left( \mathds{1}_A \frac{ d\P(N_\infty \leq y | \Fc^B_t)}{dy} \right) \textrm{ (Fubini) } \ .
\end{align*}
Then:
$$ \Q^y(A) = \E\left( \mathds{1}_A \frac{ \frac{d\P}{dy}\left( N_\infty \leq y | \Fc^B_t \right) }{ \frac{d\P}{dy}\left( N_\infty \leq y \right) } \right) \ .$$
We conclude that $\Q^y$ is absolutely continuous with respect to $\P$ and that the Radon-Nikodym derivative on $\Fc^B_t$ is given by the $\P$-martingale:
\begin{align*}
q( B_t^{(\mu)}, N_t, y) := & \frac{d\Q^y}{d\P}_{|\Fc^B_t} \\
                         = & \frac{ \frac{d\P}{dy}\left( N_\infty \leq y | \Fc^B_t \right) }{ \frac{d\P}{dy}\left( N_\infty \leq y \right) }
\end{align*}
Using the expression for the density of $N_\infty$ from corollary \ref{corollary:N_infty_law_1d}, we get:
\begin{align*}
  & q( B_t^{(\mu)}, N_t, y)\\
= & \frac{ \frac{d\P}{dy}\left( \tilde{N}_\infty \leq (y-N_t) e^{\beta( B_t^{(\mu)})} | \Fc^B_t \right) }
         { \frac{d\P}{dy}\left( N_\infty \leq dy \right) } \\
= & e^{\beta( B_t^{(\mu)}) } \frac{ \exp\left( - \frac{2}{||\beta||^2(n-N_t)e^{\beta( B_t^{(\mu)}) } } \right) \left( (n-N_t)e^{\beta( B_t^{(\mu)})} \right)^{-(1+\langle \beta^\vee, \mu \rangle)} }
                                  { \exp\left( - \frac{2}{||\beta||^2 n} \right) n^{-(1+\langle \beta^\vee, \mu \rangle)} } \ .
\end{align*}
Hence:
$$ \log q( B_t^{(\mu)}, N_t, n) = A_t - \langle \beta^\vee, \mu \rangle \beta(B_t^{(\mu)}) - \frac{2 e^{-\beta( B_t^{(\mu)}) }}{||\beta||^2(n-N_t)}$$
where $A_t$ has a zero quadratic variation. Therefore, the semimartingale bracket between $\beta(B^{(\mu)})$ and $\log q$ is:
\begin{align*}
  \left\langle \beta(B^{(\mu)}), \log q \right\rangle_t
= & \left\langle \beta(B^{(\mu)}), - \langle \beta^\vee, \mu \rangle \beta(B_.^{(\mu)}) - \frac{2 e^{-\beta( B_.^{(\mu)}) }}{||\beta||^2(n-N_.)} \right\rangle_t\\
= & - \langle \beta^\vee, \mu \rangle||\beta||^2 - \frac{2}{||\beta||^2} \left\langle \beta(B), \frac{e^{-\beta( B_.^{(\mu)}) }}{n-N_.} \right\rangle_t\\
= & - 2\beta(\mu) + 2 \int_0^t \frac{e^{-\beta( B_s^{(\mu)}) }}{n-N_s} ds\\
= & - 2\beta(\mu) + 2 \int_0^t \frac{dN_s}{n-N_s}\\
= & - 2\beta(\mu) - 2 \log\left( 1 - \frac{N_t}{n} \right)
\end{align*}
In the end, using the Girsanov theorem (\cite{bib:RevuzYor} Chapter VIII, theorem 1.4):
$$ \hat{B}_t = B_t - \frac{\beta}{||\beta||^2} \langle \beta(B^{(\mu)}), \log q \rangle_t $$
is a $\Q^y$ Brownian motion, which completes the proof of theorem \ref{thm:my_bm_with_opposite_drifts_1}.

\subsubsection*{Inversion}
A natural question is whether we can recover $\hat{B}$ or $B$ from the other. The answer is yes and the argument is again due to Matsumoto and Yor \cite{bib:MY01}. The proof of theorem \ref{thm:my_bm_with_opposite_drifts_2} follows from \ref{thm:my_bm_with_opposite_drifts_1} and the following inversion lemma.
\begin{lemma}[Inversion lemma]
\label{lemma:inversion_lemma}
 Let $x$ and $y$ be $V$ valued paths i.e functions on $\R_+$. Then
$$ (1) \ x(t) = y(t) + \log( 1 + \frac{1}{n} \int_0^t e^{-\beta(y)}) \beta^{\vee}$$
if and only if
$$ (2) \left\{ \begin{array}{ll}
\forall t>0, \int_0^t e^{-\beta(x)} < n \\
y(t) = x(t) + \log( 1 - \frac{1}{n} \int_0^t e^{-\beta(x)}) \beta^{\vee}                
               \end{array} \right.
$$
Moreover, in any case:
$$ (3) \ ( 1 + \frac{1}{n} \int_0^t e^{-\beta(y)})( 1 - \frac{1}{n} \int_0^t e^{-\beta(x)}) = 1$$
and if $\int_0^\infty e^{ -\beta(y) } = \infty$ then $\int_0^\infty e^{-\beta(x)}=n$
\end{lemma}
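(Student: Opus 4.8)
The plan is to prove the three assertions of the inversion lemma by treating it as a purely deterministic statement about paths, with the key observation that the scalar quantities $u(t) := \frac{1}{n}\int_0^t e^{-\beta(x)}\,ds$ and $v(t) := \frac{1}{n}\int_0^t e^{-\beta(y)}\,ds$ satisfy a simple ODE once one substitutes $(1)$ or $(2)$. First I would observe that applying $\beta$ to both sides of $(1)$ gives $\beta(x) = \beta(y) + \log(1+v(t))$, since $\beta(\beta^\vee) = \langle\beta^\vee,\beta\rangle = 2 \cdot \frac{\langle\beta,\beta\rangle}{\langle\beta,\beta\rangle}\cdot$ — wait, more precisely $\beta(\beta^\vee) = 2$, but what actually matters is only that $\beta$ applied to the log-term reproduces the scalar $\log(1+v(t))$ times $\beta(\beta^\vee)=2$; I must be careful here and track that factor, or renormalize so that $\beta(\beta^\vee)$ is absorbed correctly. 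Granting the bookkeeping, exponentiating yields $e^{-\beta(x)} = e^{-\beta(y)}(1+v(t))^{-2}$ — again modulo the normalization constant — which I will reconcile with the precise statement; the cleanest route is to differentiate the candidate relation $(3)$ directly.

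The core of the argument is relation $(3)$: assuming $(1)$, set $F(t) = (1+v(t))(1-u(t))$ and show $F \equiv 1$. We have $F(0) = 1$, so it suffices to show $F'(t) = 0$. Now $v'(t) = \frac{1}{n}e^{-\beta(y(t))}$ and $u'(t) = \frac{1}{n}e^{-\beta(x(t))}$, and from $(1)$, applying $\beta$, one gets $\beta(x(t)) = \beta(y(t)) + c\log(1+v(t))$ for the appropriate constant $c$ coming from $\beta(\beta^\vee)$; with the normalization in the lemma this gives $e^{-\beta(x)} = e^{-\beta(y)}(1+v(t))^{-1}$, i.e. $u'(t) = \frac{v'(t)}{1+v(t)}$. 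Then $F'(t) = v'(t)(1-u(t)) - (1+v(t))u'(t) = v'(t)(1-u(t)) - (1+v(t))\frac{v'(t)}{1+v(t)} = v'(t)(1 - u(t) - 1) = -v'(t)u(t)$. Hmm — that is not obviously zero, so I would instead work with $G(t) = (1+v(t))(1-u(t))$ and its logarithmic derivative: $\frac{G'}{G} = \frac{v'}{1+v} - \frac{u'}{1-u}$. Using $u' = \frac{v'}{1+v}$ this becomes $\frac{v'}{1+v}\left(1 - \frac{1}{1-u}\right) = \frac{v'}{1+v}\cdot\frac{-u}{1-u}$, which still is not zero unless I have the ODE wrong; the correct relation must be $u'(t) = \frac{v'(t)}{(1+v(t))^2}$ or similar, and this is exactly the point where the precise normalization of $\beta^\vee$ and the coefficient in the log must be pinned down. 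So the genuine first step is: carefully derive, from $(1)$, the pointwise identity relating $e^{-\beta(x)}$ and $e^{-\beta(y)}$, then choose $G(t)$ so that $G' = 0$ falls out; $(3)$ follows, and it encodes everything.

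Once $(3)$ is established under hypothesis $(1)$, the implication $(1)\Rightarrow(2)$ is immediate: from $(3)$, $1 - u(t) = (1+v(t))^{-1} \in (0,1]$ for all $t$, giving the constraint $\int_0^t e^{-\beta(x)}\,ds < n$; and substituting $1+v(t) = (1-u(t))^{-1}$ into $(1)$ rewritten as $y(t) = x(t) - \log(1+v(t))\beta^\vee$ yields $y(t) = x(t) + \log(1-u(t))\beta^\vee$, which is $(2)$. The converse $(2)\Rightarrow(1)$ is entirely symmetric — one runs the same ODE argument with the roles of $x$ and $y$ interchanged and the sign of the log flipped, obtaining again $(3)$ and hence $(1)$ — so I would state it as ``by the same computation'' rather than repeating it. Finally, the last claim: if $\int_0^\infty e^{-\beta(y)}\,ds = \infty$, i.e. $v(\infty) = \infty$, then from $(3)$, $1 - u(t) = (1+v(t))^{-1} \to 0$, so $u(\infty) = 1$, i.e. $\int_0^\infty e^{-\beta(x)}\,ds = n$; done.

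The main obstacle is none of the structural logic — which is a short ODE verification plus algebra — but rather the careful handling of the constant $\beta(\beta^\vee)$ and the normalization conventions so that the scalar identity driving $(3)$ comes out exactly right; getting a stray factor of $2$ wrong there would break the whole computation, so I would devote the bulk of the written proof to that one-line derivation and let the rest follow cleanly. A secondary minor point is regularity: one needs $t \mapsto \int_0^t e^{-\beta(x)}\,ds$ to be $C^1$, which is automatic since $x$ is continuous, so differentiating $G$ is justified and $G' \equiv 0$ with $G(0)=1$ genuinely forces $G\equiv 1$ on all of $\R_+$.
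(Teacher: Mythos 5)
Your overall strategy matches the paper's: derive from $(1)$ a pointwise scalar relation between $e^{-\beta(x)}$ and $e^{-\beta(y)}$, establish $(3)$, and then obtain $(2)$ by algebra and the final claim by taking $t\to\infty$. The scalar relation is indeed $e^{-\beta(x)} = e^{-\beta(y)}(1+v(t))^{-2}$ (since $\beta(\beta^\vee)=2$), i.e.\ $u'(t) = v'(t)/(1+v(t))^2$ in your notation, so your worry about the exponent was well placed and your final guess was right.

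However, there is a real gap in your method for establishing $(3)$. You propose to set $G(t) = (1+v(t))(1-u(t))$ and show $G' \equiv 0$, but this does not hold directly: with the corrected ODE one computes
\[
G'(t) = v'(t)(1-u(t)) - (1+v(t))\,\frac{v'(t)}{(1+v(t))^2} = \frac{v'(t)}{1+v(t)}\bigl(G(t)-1\bigr),
\]
so $G'$ vanishes only on the locus $G=1$; concluding $G\equiv 1$ from $G(0)=1$ requires a uniqueness/Gronwall argument for this linear ODE, which you did not supply. You sensed the issue (``choose $G$ so that $G'=0$ falls out'') but left the right choice undetermined. The paper sidesteps this entirely: it integrates $u'(s) = v'(s)/(1+v(s))^2$ directly, noting that $-1/(1+v(s))$ is an explicit antiderivative, to get $u(t) = 1 - \tfrac{1}{1+v(t)}$, which is $(3)$ with no ODE machinery. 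An equally clean repair of your differential approach is to use $\phi(t) := \tfrac{1}{1+v(t)} + u(t)$, for which $\phi'(t) = -\tfrac{v'(t)}{(1+v(t))^2} + u'(t) = 0$ on the nose, and $\phi(0)=1$ gives $(3)$. Once $(3)$ is secured, the rest of your argument — the positivity constraint $u(t)<1$, the algebraic passage from $(1)$ to $(2)$, the symmetric converse, and the limit claim — is correct and coincides with the paper's.
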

\begin{proof}
 It is immediate to see that (1) and (2) are simultaneously true if and only if (3) is true. Then all we need to prove is $(1) \Rightarrow (3)$ and $(2) \Rightarrow (3)$
\begin{eqnarray*}
 (1) & \Rightarrow & e^{-\beta(x)} = \frac{ e^{-\beta(y)} }{ ( 1 + \frac{1}{n} \int_0^t e^{-\beta(y)})^2 }\\
& \Rightarrow & \frac{1}{n}\int_0^t e^{-\beta(x)} = \left[ \frac{-1}{ 1 + \frac{1}{n} \int_0^t e^{-\beta(y)} } \right]_0^t\\
& \Rightarrow & \frac{1}{ 1 + \frac{1}{n} \int_0^t e^{-\beta(y)} } = 1-\frac{1}{n}\int_0^t e^{-\beta(x)}\\
& \Rightarrow & (3)
\end{eqnarray*}
and
\begin{eqnarray*}
 (2) & \Rightarrow & e^{-\beta(y)} = \frac{ e^{-\beta(x)} }{ ( 1 - \frac{1}{n} \int_0^t e^{-\beta(x)})^2 }\\
& \Rightarrow & \frac{1}{n}\int_0^t e^{-\beta(y)} = \left[ \frac{1}{ 1 - \frac{1}{n} \int_0^t e^{-\beta(x)} } \right]_0^t\\
& \Rightarrow & \frac{1}{ 1 - \frac{1}{n} \int_0^t e^{-\beta(x)} } = 1+\frac{1}{n}\int_0^t e^{-\beta(y)}\\
& \Rightarrow & (3)
\end{eqnarray*}
Then (3) gives the convergence of $\int_0^t e^{-\beta(x)}$ to $n$, right away.
\end{proof}
Now we are ready to prove theorem \ref{thm:my_bm_with_opposite_drifts_2}:
\begin{proof}[Proof of theorem \ref{thm:my_bm_with_opposite_drifts_2}:]
Consider a Brownian motion $B^{(\mu)}$ with drift $\mu$ conditioned to $\int_0^\infty e^{- \beta(B_s^{(\mu)})} ds = n$. By the previous filtration enlargement argument, there is $\hat{B}$ a Brownian motion in the enlarged filtration, such that:
$$ \hat{B}_t^{(s_\beta \mu)} = B^{(\mu)}_t  + \log\left(1-\frac{1}{n}\int_0^t e^{-\beta( B_s^{(\mu)} ) }ds \right) \beta^{\vee} \ .$$
Using the inversion lemma:
$$ B^{(\mu)}_t = \hat{B}_t^{(s_\beta \mu)} + \log\left(1+\frac{1}{n}\int_0^t e^{ -\beta( \hat{B}_s^{(s_\beta \mu)} ) }ds\right) \beta^{\vee} \ .$$
Then the following equalities in law between processes follow:
\begin{align*}
  & \left( B^{(\mu)}_t; t \geq 0 | \int_0^\infty e^{- \beta(B_s^{(\mu)})} ds = n \right)\\
= & \left( \hat{B}_t^{(s_\beta \mu)} + \log\left(1+\frac{1}{n}\int_0^t e^{-\beta( \hat{B}_s^{(s_\beta \mu)} ) }ds \right) \beta^{\vee} ; t \geq 0 \right)\\
\stackrel{\mathcal{L}}{=} & \left( W_t^{(s_\beta \mu)} + \log\left(1+\frac{1}{n}\int_0^t e^{-\beta( W_s^{(s_\beta \mu)} ) }ds \right) \beta^{\vee} ; t \geq 0 \right)\\
= & \left( X_t; t \geq 0 \right) \ .
\end{align*}
This ends the proof of the first fact.

The second fact is just a consequence of knowing the law of $\int_0^\infty e^{- \beta(B_s^{(\mu)})} ds$ and the usual disintegration formula. For $F$ continuous functional on the sample space, we have:
$$ \E\left( F( B^{(\mu)}_. ) \right) =  \E\left( \E\left( F( B^{(\mu)}_.  ) | \int_0^\infty e^{ -\beta( B_s^{(\mu)} ) }ds \right) \right)$$
\end{proof}

\section{Some open questions}
\label{section:open_questions}

We end the paper by listing some open questions.

\begin{question}
We noticed in section \ref{section:identities} that the beta-gamma algebra identities in theorem \ref{thm:beta_gamma_identities} for type $A_2$ characterize the gamma random variable thanks to \cite{bib:Luk55}. Is it true in other types?

We would like to say that if the group theoretic transforms related to total positivity give independent random variables, then the input is made of gamma variables.
\end{question}

\begin{question}
In \cite{bib:BBO2}, for example in proposition 5.9, a sequence of mutually independent exponential random variables naturally appears. These depend on a choice of reduced expression, and one can deduce hidden identities in law identical to ours in the crystalligraphic case. However, the general Coxeter setting of \cite{bib:BBO2} goes beyond our framework, simply because for a non-crystalligraphic Coxeter group $W$ there is no Lie group for which $W$ is the Weyl group. 

Moreover, the tropical relations that appear there have irrational coefficients, and therefore cannot be the tropicalization of rational subtraction-free expressions. Indeed, in the proof of theorem 3.12, for the dihedral root system $I(m)$, transition maps make use of $\lambda = cos\left( \frac{2 \pi}{m} \right)$ and Tchebicheff polynomials in $\lambda$. $\lambda$ is indeed rational for the crystallographic values $m=2, 3, 4, 6$, but this is not true in general.

It would be very interesting to gain further insight in the Coxeter case and being able to explicit such relations. Is it possible to obtain a geometric lifting to identities between gamma variables?
\end{question}

\bibliographystyle{halpha}
\bibliography{Bib_Thesis}

\section*{Appendix: Positive root enumerations for rank \texorpdfstring{$2$}{2} systems}
\label{appendix:positive_roots_enumeration}

\begin{table}[ht!]
\centering
\label{tab:roots_A2}
$\alpha_1 = \begin{pmatrix} 1 \\ -1 \\ 0 \end{pmatrix} , \alpha_2 = \begin{pmatrix} 0 \\ 1 \\ -1 \end{pmatrix} $
  \begin{tabular}{|c|cc|}
  \hline
            &          121          &        212           \\
  \hline
  $\beta_1$ &      $\alpha_1$       &      $\alpha_2$      \\
  $\beta_2$ &   $\alpha_1+\alpha_2$ &  $\alpha_1+\alpha_2$ \\
  $\beta_3$ &      $\alpha_2$       &      $\alpha_1$      \\
  \hline
  \end{tabular}
\caption{Positive roots enumerations for type $A_2$}
\end{table} 

\begin{table}[ht!]
\centering
\label{tab:roots_B2}
$\alpha_1 = \begin{pmatrix} 1 \\ -1 \end{pmatrix} , \alpha_2 = \begin{pmatrix} 0 \\ 1 \end{pmatrix} $
  \begin{tabular}{|c|cc|}
  \hline
            &          1212         &        2121           \\
  \hline
  $\beta_1$ &      $\alpha_1$       &      $\alpha_2$       \\
  $\beta_2$ &   $\alpha_1+\alpha_2$ &  $\alpha_1+2\alpha_2$ \\
  $\beta_3$ &  $\alpha_1+2\alpha_2$ &   $\alpha_1+\alpha_2$ \\
  $\beta_4$ &      $\alpha_2$       &      $\alpha_1$       \\
  \hline
  \end{tabular}
\caption{Positive roots enumerations for type $B_2$}
\end{table} 

\begin{table}[!ht]
\centering
\label{tab:roots_C2}
$\alpha_1 = \begin{pmatrix} 1 \\ -1 \end{pmatrix} , \alpha_2 = \begin{pmatrix} 0 \\ 2 \end{pmatrix} $
  \begin{tabular}{|c|cc|}
  \hline
            &          1212         &        2121           \\
  \hline
  $\beta_1$ &      $\alpha_1$       &      $\alpha_2$       \\
  $\beta_2$ &  $2\alpha_1+\alpha_2$ &   $\alpha_1+\alpha_2$ \\
  $\beta_3$ &   $\alpha_1+\alpha_2$ &  $2\alpha_1+\alpha_2$ \\
  $\beta_4$ &      $\alpha_2$       &      $\alpha_1$       \\
  \hline
  \end{tabular}
\caption{Positive roots enumerations for type $C_2$}
\end{table} 

\begin{table}[!ht]
\centering
\label{tab:roots_G2}
$$\alpha_1 = \begin{pmatrix} 0 \\ 1 \\ -1\end{pmatrix} ,
 \alpha_2 = \begin{pmatrix} 1 \\ -2 \\ 1\end{pmatrix},
 $$
  \begin{tabular}{|c|cc|}
  \hline
            &             121212           &          212121              \\
  \hline
  $\beta_1$ &          $\alpha_1$          &          $\alpha_2$          \\
  $\beta_2$ &    $3\alpha_1+\alpha_2$      &       $\alpha_1+\alpha_2$    \\
  $\beta_3$ &     $2 \alpha_1+\alpha_2 $   &      $3\alpha_1 + 2\alpha_2$ \\
  $\beta_4$ &     $3 \alpha_1+2\alpha_2 $  &      $2\alpha_1+\alpha_2$    \\
  $\beta_5$ &     $\alpha_1+ \alpha_2$     &      $3\alpha_1+\alpha_2$    \\
  $\beta_6$ &          $\alpha_2$          &          $\alpha_1$          \\
  \hline
  \end{tabular}
\caption{Positive roots enumerations for type $G_2$}
\end{table} 

\end{document}